\documentclass[11pt]{article}
\usepackage{amsmath,amsfonts,amssymb,amsthm,enumerate}
\usepackage{graphicx}
\usepackage{caption}
\usepackage{tikz,authblk}
\usepackage{subfigure}
\usepackage{hyperref}

\textheight9in \textwidth6in \hoffset-0.6in \voffset-0.6in




\theoremstyle{definition}
\newtheorem{definition}{Definition}[section]
\newtheorem{example}[definition]{Example}

\theoremstyle{remark}

\theoremstyle{plain}
\newtheorem{theorem}[definition]{Theorem}
\newtheorem{lemma}[definition]{Lemma}
\newtheorem{proposition}[definition]{Proposition}

\newtheorem{corollary}[definition]{Corollary}
\newtheorem{conjecture}[definition]{Conjecture}
\newtheorem{qn}[definition]{Question}

\begin{document}

\author[1] {Basudeb Datta}
\author[2] {Dheeraj Kulkarni}

 \affil[1] {Department of Mathematics, Indian Institute of Science, Bangalore 560\,012, India.  dattab@math.iisc.ernet.in.}
 \affil[2] {Theoretical Statistics and Mathematics Unit, Indian Statistical Institute, 203 B. T. Road, Kolkata 700\,108,  India.  dheeraj.kulkarni@gmail.com}

\title{Minimal contact triangulations of 3-manifolds}

\date{August 12, 2016}

\maketitle

\vspace{-10mm}

\begin{abstract}
In this paper, we explore minimal contact triangulations on contact 3-manifolds. We give many explicit examples of contact triangulations that are close to minimal ones. The main results of this article say that on any closed oriented 3-manifold the number of vertices for minimal contact triangulations for overtwisted contact structures grows at most linearly with respect to the relative $d^3$ invariant. We conjecture that this bound is optimal. We also discuss contact triangulations for a certain family of overtwisted contact structures on 3-torus.
\end{abstract}

\section{Introduction and Main Results}

The triangulations of manifolds have played great role in understanding the topology of the underlying manifold.
Especially, in 3-manifold topology the study of triangulations has been a driving force behind many classical results
which are qualitative in nature. Given a manifold, the study minimal triangulations (triangulations with least number
of vertices or edges and so on) is a part of combinatorial topology. Minimal triangulations are closely tied up with
the topology of the underlying manifold.

A contact structure on a 3-manifold $M$ is a 2-plane field that is nowhere integrable.
Very often, a contact structure is given by $\text{ker}(\alpha)$ for some 1-form $\alpha$ on the manifold $M$. By Frobenius' theorem, the non-integrability of $\text{ker}(\alpha)$ is equivalent to $\alpha \wedge d \alpha \neq 0$. In this case, we call $\alpha$ as a contact form. Throughout the article we will consider contact structures that are defined by kernel of some 1-form. For instance, on $\mathbb{R}^3$ the 1-form $\alpha_{st} = dz +xdy$ defines a contact structure. We denote $\text{ker}(\alpha_{st})$ by $\xi_{st}$.

In the recent past, the contact structures have been studied with great interest for various reasons. In dimension three, there is a tremendous progress (see for example \cite{DG}, \cite{El89},  \cite{EH}, \cite{Gi00}) in understanding the contact structures. To begin with, it is known that every closed oriented 3-manifold admits
a contact structure (see \cite{Lu}, \cite{Ma}).

The notion of cellular decomposition that is compatible with contact structure has been used in \cite{Gi00} and \cite{HKM}. Thi is formally known as {\em contact cellular decomposition} (see \cite[Section 1.1]{HKM}, \cite[Section 4.7]{Et}). It has been used to show existence of open books and partial open books. Inspired from this notion, we take the following definition of contact triangulation.

Let $(M, \xi)$ be a contact 3-manifold. A triangulation $X$ of $M$ is said to be a {\em contact triangulation} if
(i) all the edges of $X$ are Legendrian (i.e., edges are tangent to contact plane at all points), (ii) no 2-face of
$X$ is an overtwisted disk (see Section \ref{ot_disk_dfn} for the definition of overtwisted disk), and (iii) the
restriction of $\xi$ to the interior of each 3-face of $X$ is contactomorphic to the standard contact structure
$(B, \xi_{st})$ for some open set $B \subset \mathbb{R}^3$. Hence, no 3-face contains an overtwisted disk.

We note that this definition differs from that of contact cellular decompostion on the following point.
In the earlier notion, the 2-faces are required to satisfy the condition that the twisting number $tw$ is $-1$ along
the boundary of each 2-face. We relax this condition as we need to subdivide the triangulation in order to achieve $tw = -1$.
From the combinatorial view point, this imposes additional problem as the growth of minimum number of vertices is exponential
and difficult to keep track of. Instead, with the above notion, as we will show later, it is easier to capture geometric
complexity of contact structures. For an explicit example of a contact triangulation
see Example \ref{explicit_example}.

As noted before, the notion of contact triangulations (contact cellular decomposition to be precise) is used to obtain qualitative results.
On the other hand, quantitative aspect of contact triangulations has not been studied before  as far as we know. We say that a contact
triangulation $\Sigma$ of $(M, \xi)$ is \emph{minimal}  if the number of vertices in $\Sigma$ gives the lower bound
for the number of vertices for any  contact triangulation of $(M, \xi)$. One expects that minimal contact triangulations
should reflect the complexity of the contact manifold. Thus, the following questions are of interest.

\begin{qn}
Given a contact 3-manifold $(M, \xi)$, what is the minimum number of vertices $n(\xi) $ required to have a contact triangulation of $(M, \xi)$?
How does the number $n(\xi) $ change if we change $\xi $ on $M$?
\end{qn}

In general it is hard to give examples of minimal triangulations. The same holds true of minimal contact triangulations.

There is dichotomy of contact structures. The notion of overtwisted and tight (see Section \ref{ot_disk_dfn}) contact structures
was introduced by Eliashberg in \cite{El89}. The classification (up to isotopy) of contact structures is relatively easier
for overtwisted contact structures than tight ones. Eliashberg showed that overtwisted contact structures are classified
completely by the homotopy classes of contact structures as 2-plane fields. However, existence and classification
problem of tight contact structure is not completely understood in generality.

If we take a triangulation of $(M, \xi)$ with $\xi$ tight then we can deform
it into a contact triangulation by applying small perturbations (see Lemma \ref{tight_triangulation}). Thus,
for tight contact structures minimal contact triangulations coincide with minimal triangulations of the underlying manifold.
However, for overtwisted contact manifolds, minimal contact triangulations turn out to be interesting objects.
Before we state the main results, we fix some notation. Given a triangulation $\Sigma$ of a 3-manifold $M $, $f_0(\Sigma)$
denotes the number of vertices in $\Sigma$. The following result produces contact triangulations efficiently for all
overtwisted contact structures on 3-sphere by using Lutz twist operation (see Subsection \ref{lt}).

\begin{theorem}\label{mainthm-1}
Let $\xi$ be a contact structure on $\mathbb{S}^3$. Let $\xi_+$ denote the overtwisted contact structure obtained by
Lutz twisting along a positively transverse right handed trefoil in $(\mathbb{S}^3, \xi)$ with self-linking number $+1$.
Let $\xi_-$ denote the overtwisted contact structure obtained by Lutz twisting along a positively
transverse unknot in $(\mathbb{S}^3, \xi)$ with self-linking number $-1$. If $\Sigma$ is a contact triangulation of $(\mathbb{S}^3, \xi)$,
then there exists a contact triangulation $\Sigma_+$ $($respectively $\Sigma_-)$ of $(\mathbb{S}^3, \xi_+)$ $($respectively
$(\mathbb{S}^3, \xi_-))$ such that $f_0 (\Sigma_{\pm})= f_0(\Sigma) + 3$.
\end{theorem}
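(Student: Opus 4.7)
My plan is to localize the Lutz twist inside a single 3-simplex of $\Sigma$, so that only that simplex needs to be re-triangulated. By axiom (iii) of a contact triangulation, the interior of every 3-face of $\Sigma$ is contactomorphic to an open subset of $(\mathbb{R}^3,\xi_{st})$ and therefore contains, after an ambient isotopy, any positively transverse knot type. Concretely, I would pick an arbitrary 3-simplex $\sigma$ of $\Sigma$ and embed in its interior a positively transverse right-handed trefoil $K_+$ with self-linking number $+1$ (for the $+$ case) or a positively transverse unknot $K_-$ with self-linking number $-1$ (for the $-$ case), together with a small standard tubular neighborhood $N(K_\pm)\subset\sigma$. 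Performing the Lutz twist on $N(K_\pm)$ yields the target contact manifold $(\mathbb{S}^3,\xi_\pm)$; because the change of contact structure is supported in $\sigma$, every 3-simplex of $\Sigma$ other than $\sigma$ can be reused verbatim, and the whole task reduces to a local one inside $\sigma$.

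The combinatorial core of the proof is then an explicit re-triangulation of $\sigma$ that (a) leaves $\partial\sigma$ fixed, (b) uses exactly three new interior vertices, and (c) satisfies axioms (i)--(iii) with respect to $\xi_\pm$. I would place the three new vertices so that the edges joining them span a small Legendrian triangle $T$ positioned transversely to the family of overtwisted disks produced by the Lutz twist; together with $\partial\sigma$, $T$ induces a subdivision of $\sigma$ into finitely many tetrahedra. One then checks that every 2-face is either contained in the tight complement of $N(K_\pm)$, or is a Legendrian triangle whose boundary twisting is larger than $-2$ (ruling out overtwisted disks), and that every tetrahedron in the subdivision is small enough that its interior is contactomorphic to a standard contact ball.

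The main obstacle is axiom (iii) inside the overtwisted region: $N(K_\pm)$ carries overtwisted disks, yet each tetrahedron meeting $N(K_\pm)$ must be tight and standard. The construction must guarantee that every overtwisted disk in $N(K_\pm)$ is cut transversely by the interior 2-skeleton, and it is this transversality requirement which dictates the precise placement of the three new vertices. This is also where the self-linking numbers $\pm 1$ enter: they pin down the local contact model of $N(K_\pm)$ and therefore how tightly the Legendrian triangle $T$ must hug the core. The detailed geometry differs between the unknot case and the trefoil case --- for the trefoil one must additionally verify that the combinatorics of $T$ is compatible with the knotted embedding of $N(K_+)$ inside $\sigma$, so that the correct self-linking number is recovered --- but in both cases the verification shows that three interior vertices suffice, giving $f_0(\Sigma_\pm)=f_0(\Sigma)+3$ as claimed.
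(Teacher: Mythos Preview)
Your high-level plan --- localize the Lutz twist inside a single $3$-simplex $\sigma$ of $\Sigma$ and re-triangulate $\sigma$ with three new interior vertices --- is, at the level of vertex counting, equivalent to what the paper does: the paper builds an explicit $7$-vertex contact triangulation $S_{12}=T_1\cup T_2$ of the Lutz-twisted sphere (Lemmas~\ref{d_3_-1} and~\ref{d_3_+1}), removes one $3$-simplex from $T_2$ and one from $\Sigma$, and forms the elementary connected sum. Since $S_{12}$ minus a tetrahedron is a triangulated ball with $7$ vertices, four of them on the boundary, this \emph{is} a re-triangulation of one simplex of $\Sigma$ with three new interior vertices.

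The gap is in your proposed \emph{geometry} of that re-triangulation. A ``small Legendrian triangle $T$ positioned transversely to the family of overtwisted disks'' cannot work: after a Lutz twist the overtwisted disks form an $S^1$-family of meridional disks in $N(K_\pm)$, and a single small $2$-simplex (together with whatever faces it spans with $\partial\sigma$) will miss most of that circle's worth of disks, so some tetrahedron of your subdivision will contain a full overtwisted disk and violate axiom~(iii). In the paper's construction the three extra vertices do not span a small triangle at all: together with the four vertices of $\sigma$ they sit on an embedded $7$-vertex M\"obius torus $\tau=T_1\cap T_2$, and the relevant interior $2$-skeleton consists of that entire torus plus seven meridional triangles of the solid torus $T_1$ it bounds. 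For the unknot, $U$ is taken to be the \emph{core} of $T_1$, Lemma~\ref{disk-lemma} shows that every sufficiently large meridional disk escapes each tetrahedron of $T_1$, and Lemma~\ref{Lutz_twist_unknot} tunes the Lutz twist so its overtwisted disks are that large. For the trefoil, $K_\varepsilon$ is placed on a torus parallel to $\tau$ so that its thin tubular neighbourhood straddles both $T_1$ and $T_2$; this forces every overtwisted disk to meet $\tau$ and hence to leave every tetrahedron (Lemma~\ref{d_3_+1}). Neither mechanism is captured by a single transverse triangle, and the self-linking numbers $\pm 1$ play no role in the local combinatorics --- they enter only afterwards, in identifying $\xi_\pm$ via the $d^3$ invariant.
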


We consider $\mathbb{S}^3$ as the boundary of the unit ball in $\mathbb{R}^4 $. Let $(x_1,y_1,x_2,y_2)$ denote a coordinate
system in $\mathbb{R}^4$. Then, the 2-plane distribution given by $\text{ker}(x_1dy_1 -y_1dx_1 + x_2dy_2 -y_2dx_2) $
defines a contact structure on $\mathbb{S}^3$. We denote it by $\xi_{std}$.
As a consequence of the above theorem, we get 

\begin{corollary}\label{cor-1}
Let $\xi_{ot}$ be an overtwisted contact structure on $\mathbb{S}^3$ with
$d^3(\xi_{ot}, \xi_{std}) = n$ for some $n \in \mathbb{Z}$. Then there exists a
contact triangulation of $(\mathbb{S}^3, \xi_{ot})$ with $m$-vertices where
$$
m=
\begin{cases}
3|n| +4 \ \text{ if } n \neq 0, \\
10 \ \text{ if } n =0.
\end{cases}
$$
\end{corollary}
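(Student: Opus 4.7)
The strategy is to bootstrap from a minimal contact triangulation of $(\mathbb{S}^3, \xi_{std})$ and iterate Theorem \ref{mainthm-1}, using Eliashberg's classification of overtwisted contact structures on $\mathbb{S}^3$ by the $d^3$ invariant to identify the constructed structure with the given $\xi_{ot}$.

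First I would set up the base case. The boundary of the $4$-simplex is a $4$-vertex triangulation of $\mathbb{S}^3$, and since $\xi_{std}$ is tight, Lemma \ref{tight_triangulation} upgrades this to a contact triangulation of $(\mathbb{S}^3, \xi_{std})$ with $f_0 = 4$. Normalising $d^3(\xi_{std}, \xi_{std}) = 0$, the standard $d^3$-calculus for Lutz twists along transverse knots shows that the $\xi_+$ operation of Theorem \ref{mainthm-1} (twist along the right-handed trefoil with $sl = +1$) raises $d^3$ by $+1$, while the $\xi_-$ operation (twist along the unknot with $sl = -1$) lowers it by $1$; both operations produce overtwisted structures and, by Theorem \ref{mainthm-1}, add exactly three vertices.

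For $n > 0$, I would apply the $\xi_+$ operation $n$ times in succession to the $4$-vertex seed. An immediate induction produces a contact triangulation of an overtwisted contact structure $\xi'$ on $\mathbb{S}^3$ with $d^3(\xi', \xi_{std}) = n$ and $f_0 = 4 + 3n = 3|n|+4$. By Eliashberg's classification \cite{El89} of overtwisted contact structures on $\mathbb{S}^3$, $\xi'$ is isotopic to $\xi_{ot}$; transporting the triangulation along such an isotopy gives the required contact triangulation of $(\mathbb{S}^3, \xi_{ot})$. The case $n < 0$ is symmetric, applying the $\xi_-$ operation $|n|$ times, again yielding $3|n|+4$ vertices. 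For $n = 0$, the target $\xi_{ot}$ is overtwisted yet shares its $d^3$ with $\xi_{std}$, so I would apply one $\xi_+$ operation followed by one $\xi_-$ operation: this reaches $f_0 = 4 + 3 + 3 = 10$ and $d^3 = 0$ in an overtwisted structure (overtwistedness being preserved under further Lutz twists), and Eliashberg's classification again identifies the result with $\xi_{ot}$.

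The substantive content is packaged inside Theorem \ref{mainthm-1}; granted that, the corollary reduces to a bookkeeping argument combining induction on $|n|$ with the $d^3$-calculus of Lutz twists and the $d^3$-classification of overtwisted structures on $\mathbb{S}^3$. The main points requiring care are confirming that the two Lutz operations of Theorem \ref{mainthm-1} change $d^3$ by exactly $\pm 1$ (so that $n$ iterations land precisely at $d^3 = n$) and that the outputs are genuinely overtwisted (so that the $n = 0$ case sits in the correct Eliashberg class); both are standard facts about (half or full) Lutz twists along transverse knots of the indicated self-linking numbers.
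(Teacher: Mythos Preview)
There is a genuine gap in your base case. The boundary of the $4$-simplex has \emph{five} vertices, not four: $\partial\Delta^4$ is the unique minimal triangulation $S^3_5$ of $\mathbb{S}^3$, and no triangulation of $\mathbb{S}^3$ can have fewer than five vertices. With the correct seed $f_0 = 5$, your induction via Theorem~\ref{mainthm-1} produces $5 + 3|n| = 3|n| + 5$ vertices for $n \neq 0$ and $5 + 3 + 3 = 11$ vertices for $n = 0$, missing the stated bound by one in every case.

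The paper's proof avoids this loss. Rather than seeding at $S^3_5$ and invoking Theorem~\ref{mainthm-1} for the first Lutz twist, it uses the explicit $7$-vertex contact triangulations $S_0$ and $S_{\varepsilon}$ of $(\mathbb{S}^3, \xi_{std}^U)$ and $(\mathbb{S}^3, \xi_{std}^{K_{\varepsilon}})$ built in Lemmas~\ref{d_3_-1} and~\ref{d_3_+1}. These already realise $d^3 = \pm 1$ with $7 = 3\cdot 1 + 4$ vertices, one fewer than Theorem~\ref{mainthm-1} would give from the $5$-vertex seed. The inductive step then takes elementary connected sums with further copies of $S_{\varepsilon}$ (or $S_0$), adding $7 - 4 = 3$ vertices each time, and the case $n = 0$ is handled by $S_0 \# S_{\varepsilon}$ with $7 + 7 - 4 = 10$ vertices. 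The use of Eliashberg's classification to identify the constructed structure with $\xi_{ot}$ is exactly as you describe. So your overall strategy is right in spirit, but the sharp constant requires the direct $7$-vertex constructions of Section~3, not an appeal to Theorem~\ref{mainthm-1} from the minimal tight triangulation.
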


We have a generalization of Theorem \ref{mainthm-1} for a contact 3-manifold $(M, \xi)$ as follows.

\begin{theorem}\label{mainthm-2}
Let $M$ be a closed $3$-manifold. Let $\xi$ be a contact structure on $M$. We have a positively transverse
right-handed trefoil $K$ and unknot $U$ embedded in a small Darboux ball in $(M, \xi)$ with self-linking
numbers $+1$ and $-1$ respectively. Let $\xi_+$ $(\xi_-)$ denote the contact structure obtained by Lutz-twisting
$\xi$ along $K$ $($respectively along $U)$. Let $\mathcal{M}$ be a contact triangulation of $(M, \xi)$. Then
there exists a contact triangulation $\mathcal{M}_+$  (respectively $\mathcal{M}_-)$ of $(M, \xi_+)$ $($respectively
$(M, \xi_-))$ such that $f_0(\mathcal{M}_{\pm}) = f_0(\mathcal{M}) +3$.
\end{theorem}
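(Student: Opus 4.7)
The plan is to reduce Theorem~\ref{mainthm-2} to Theorem~\ref{mainthm-1} by exploiting the locality of the Lutz twist, together with the local standardness built into the definition of a contact triangulation. The key observation is that the passage from $\xi$ to $\xi_\pm$, and the corresponding combinatorial modification of the triangulation, are both supported in a neighborhood of the transverse knot, which by hypothesis lies inside a small Darboux ball $B\subset M$.

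First, I would use that $B$ is contactomorphic to an open subset of $(\mathbb{R}^3,\xi_{st})$, and that by condition (iii) of the definition of contact triangulation the interior of every 3-simplex of $\mathcal{M}$ is itself contactomorphic to an open set in $(\mathbb{R}^3,\xi_{st})$. By shrinking $B$ if necessary---note that a transverse trefoil of self-linking $+1$ and a transverse unknot of self-linking $-1$ each exist in arbitrarily small Darboux balls in $(\mathbb{R}^3,\xi_{st})$---one can arrange that $B$ is contained in the interior of a single 3-simplex $\sigma$ of $\mathcal{M}$. Composing Darboux charts then identifies $(\sigma^{\circ},B)$ together with $\xi|_{\sigma^{\circ}}$ with a configuration sitting inside $(\mathbb{R}^3,\xi_{st})$.

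Next, I would invoke the construction from the proof of Theorem~\ref{mainthm-1}. That construction, when applied in a small neighborhood of the transverse trefoil (respectively unknot), introduces exactly three new vertices together with the Legendrian edges, non-overtwisted 2-faces, and standard 3-cells needed to combinatorially realize the Lutz-twisted tubular neighborhood. Because this combinatorial patch is supported inside a standard contact ball, it can be transported through the Darboux charts above and inserted into $\sigma^{\circ}$. The resulting complex $\mathcal{M}_\pm$ agrees with $\mathcal{M}$ outside $\sigma$ (where $\xi_\pm$ and $\xi$ coincide, since the Lutz twist is supported in $B$) and has exactly three new vertices in $\sigma^{\circ}$. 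The three axioms of a contact triangulation for $(M,\xi_\pm)$ then split into two pieces: outside $\sigma$ they are inherited verbatim from $\mathcal{M}$, while inside $\sigma$ they are exactly the local verifications already carried out in the sphere case.

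I expect the main obstacle to be arranging the combinatorial insertion so that the boundary $\partial\sigma$ is left undisturbed while the three new vertices and all new simplices still encode the Lutz twist. Concretely, one must place the new vertices strictly in $\sigma^{\circ}$, triangulate the remaining region of $\sigma$ compatibly, and verify that any new 2-face adjacent to $\partial\sigma$ fails to be an overtwisted disk for $\xi_\pm$. Once this compatibility is established, the vertex count $f_0(\mathcal{M}_\pm)=f_0(\mathcal{M})+3$ is automatic, matching the count in Theorem~\ref{mainthm-1}.
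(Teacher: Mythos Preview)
Your proposal is correct and is, at bottom, the same construction the paper uses; the difference is only in packaging. The paper phrases the step not as ``insert a patch into the interior of a single simplex $\sigma$ of $\mathcal{M}$'' but as the elementary connected sum $\mathcal{M}\#S_{12}$, where $S_{12}=T_1\cup T_2$ is the $7$-vertex triangulation of $(\mathbb{S}^3,\xi_{std})$ from Example~\ref{2-torus}: one deletes a $3$-simplex $\mu\in\mathcal{M}$ and a $3$-simplex $\sigma\in T_2$ and identifies their boundaries. The transverse unknot (core of $T_1$) or trefoil (sitting near $\partial T_1=\partial T_2$) then lives entirely in the $S_{12}$ piece, and the Lutz twist plus the contact-triangulation checks are literally those already carried out in Lemmas~\ref{d_3_-1} and~\ref{d_3_+1}. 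This is of course equivalent to your picture, since removing $\mu$ and gluing in $S_{12}\setminus\{\sigma\}$ is exactly ``replacing one tetrahedron by a triangulated ball with three extra interior vertices''. The payoff of the connected-sum language is that your anticipated main obstacle evaporates: the boundary $\partial\mu$ is matched with $\partial\sigma$ by definition of the connected sum, so no separate compatibility argument is needed, and one does not have to first maneuver the Darboux ball into a pre-existing simplex of $\mathcal{M}$. The vertex count $f_0(\mathcal{M})+7-4=f_0(\mathcal{M})+3$ is then immediate.
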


As a consequence we get 

\begin{corollary}\label{cor-2}
Let $(M, \xi)$ be a closed contact $3$-manifold with a contact triangulation $\mathcal{M}$. Let $\xi'$ be an
overtwisted contact structure with $d^2(\xi', \xi)=0$ and $d^3(\xi', \xi)=n$ for some $n \in \mathbb{Z} $.
Then there exists a contact triangulation of $\mathcal{M}'$ of $(M, \xi')$ such that $f_0(\mathcal{M}')= 3 |n|+ f_0(\mathcal{M})$.
\end{corollary}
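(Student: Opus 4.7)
The plan is to iterate Theorem~\ref{mainthm-2} exactly $|n|$ times and then invoke Eliashberg's classification of overtwisted contact structures to identify the resulting contact manifold with $(M, \xi')$.

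First, I would record the homotopy-theoretic effect of a single Lutz twist in Theorem~\ref{mainthm-2}. Since the transverse trefoil $K$ and the transverse unknot $U$ both sit inside a small Darboux ball, they are null-homologous in $M$, so the Lutz twist leaves $d^2$ unchanged: $d^2(\xi_\pm,\xi)=0$. The standard formula for the change of $d^3$ under a Lutz twist along a transverse knot (in terms of the knot's self-linking number) gives $d^3(\xi_+,\xi)=+1$ when we twist along a trefoil with $sl=+1$, and $d^3(\xi_-,\xi)=-1$ when we twist along an unknot with $sl=-1$. Verifying these two integer shifts is the only genuinely non-formal computational step in the argument.

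Next, I would perform the iteration. If $n\ge 0$, set $\mathcal{M}_0=\mathcal{M}$, $\xi_0=\xi$; and for $i=1,\dots,n$ choose a small Darboux ball in $(M,\xi_{i-1})$ disjoint from the underlying space of $\mathcal{M}_{i-1}$'s vertices (possible since $\mathcal{M}_{i-1}$ is finite), place a positively transverse right-handed trefoil of self-linking $+1$ inside it, and apply Theorem~\ref{mainthm-2} to produce $\mathcal{M}_i$ with $f_0(\mathcal{M}_i)=f_0(\mathcal{M}_{i-1})+3$. After $n$ steps we obtain a contact triangulation $\mathcal{M}_n$ of some contact structure $\xi_n$ on $M$ with $f_0(\mathcal{M}_n)=f_0(\mathcal{M})+3n$, $d^2(\xi_n,\xi)=0$, and, by additivity of the relative $d^3$ invariant over successive Lutz twists, $d^3(\xi_n,\xi)=n$. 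If $n<0$, the identical argument with $|n|$ unknot twists yields the analogous triangulation.

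Finally, I would identify $\xi_n$ with $\xi'$. Every Lutz twist produces an overtwisted contact structure, so $\xi_n$ is overtwisted; by hypothesis $\xi'$ is overtwisted as well, and the two share the same $d^2$ and $d^3$ invariants relative to $\xi$. Eliashberg's classification theorem (already quoted in the introduction) then furnishes an isotopy carrying $\xi_n$ to $\xi'$, and pushing $\mathcal{M}_n$ forward along this isotopy yields the desired contact triangulation $\mathcal{M}'$ of $(M,\xi')$ with $f_0(\mathcal{M}')=f_0(\mathcal{M})+3|n|$. The main obstacle is the $d^3$ bookkeeping in the first step, confirming that each Lutz twist contributes exactly $\pm 1$; once that is nailed down, the corollary is purely a matter of iterating Theorem~\ref{mainthm-2} and invoking Eliashberg's theorem.
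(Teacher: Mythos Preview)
Your approach is essentially the same as the paper's: iterate Theorem~\ref{mainthm-2} and then invoke Eliashberg's classification. The bookkeeping for $d^2$ and $d^3$ is exactly as the paper records it, and for $n\neq 0$ your argument matches the paper's proof.

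There is, however, a genuine gap at $n=0$. In that case your loop is empty, so $\xi_0=\xi$ and no Lutz twist has been performed. Your sentence ``Every Lutz twist produces an overtwisted contact structure, so $\xi_n$ is overtwisted'' is then vacuous: $\xi$ need not be overtwisted (indeed the corollary is most interesting when $\xi$ is tight), and Eliashberg's theorem only identifies two \emph{overtwisted} structures with the same invariants. Thus you cannot conclude that $\xi_0$ is isotopic to the given overtwisted $\xi'$. The paper handles $n=0$ separately: it performs one trefoil twist followed by one unknot twist along an unlinked unknot, obtaining an overtwisted structure with $d^2=0$ and $d^3=0$ relative to $\xi$, at the cost of $6$ extra vertices. (You might notice that this yields $f_0(\mathcal{M})+6$ rather than the $f_0(\mathcal{M})+3|n|=f_0(\mathcal{M})$ promised by the statement; this is a discrepancy already present in the paper, and is parallel to the $n=0$ clause in Corollary~\ref{cor-1}.)
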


The above results give an upper bound (linear in terms relative $d^3$ invariant) for $n(\xi) $ (the number of vertices in minimal contact
triangulations) for overtwisted contact structures $\xi $ with relative $d^2 $ invariant fixed.
It is hard to estimate the growth of vertices for minimal contact triangulation with respect to relative $d^2 $ invariants.
The change in relative $d^2$ of contact structures amounts to Lutz twisting along transverse knots that are homologically
nontrivial and could be highly complex from combinatorial view point. Therefore, one can not expect a linear
upper bound for $n(\xi) $ in terms of relative $d^2 $ invariant. To explore the effect of Lutz twisting
along a homologically nontrivial knots, we consider a certain family of contact structures $\xi_n $ on $3 $-torus
(see Section \ref{3-torus} for details) that are obtained by repeated application of Lutz twist along a
fixed knot which is not null-homologous. We construct contact triangulations of this family of contact structures.
This triangulations are {\em close} to minimal ones by virtue of the construction.
Specifically, we have the following

\begin{theorem}\label{3-torus_thm}
 For $n \geq 2$, there is a contact triangulation of $(T^3, \xi_n)$ with $8n^3$ vertices and $40n^3$ \, $3$-simplices.
\end{theorem}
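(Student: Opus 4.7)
The plan is to realize the required triangulation as a simplicial refinement of a cubulation of $T^3 = \mathbb{R}^3/\mathbb{Z}^3$. Choose coordinates $(x,y,z)$ so that, as in Section~\ref{3-torus}, $\xi_n$ is the kernel of a $1$-form whose contact planes rotate $n$ full turns as $z$ traverses the circle $\mathbb{R}/\mathbb{Z}$. Subdivide $T^3$ into $(2n)^3 = 8n^3$ axis-aligned closed cubes of side $1/(2n)$ by the coordinate planes $\{x,y,z \in \{0, 1/(2n), \ldots, (2n-1)/(2n)\}\}$; the $8n^3$ lattice corners will be the vertices. The factor $2$ in the subdivision serves two independent purposes: it ensures that the contact planes rotate by only $\pi$ across any single cube, and it makes the number of cubes in each direction even, which is needed for a globally consistent triangulation of every cube into the standard five-tetrahedron decomposition with matched diagonals on shared $2$-faces. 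This accounts for the count $5 \cdot 8n^3 = 40n^3$ of $3$-simplices.

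Next I would upgrade this piecewise-linear skeleton to a contact triangulation by replacing each straight edge by a Legendrian arc joining its endpoints. The vertical edges are already Legendrian since $\partial_z$ lies in every contact plane of $\xi_n$. For each horizontal and each cube-diagonal edge, the $z$-range has length at most $1/(2n)$, so the contact plane swings by at most $\pi$ along it; this leaves room to choose a Legendrian arc $C^0$-close to the straight segment and contained in the open star of the corresponding cube. The paper's definition of a contact triangulation does not require piecewise linearity, so this Legendrian realization is permitted.

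With the edges Legendrian, the last task is to check the two contact conditions on faces and on solid simplices. For each $3$-simplex, the diameter $O(1/n)$ makes $\xi_n$ on the simplex $C^0$-close to a translation-invariant (hence standard) contact structure, so the restriction is tight; Eliashberg's uniqueness of tight structures on a ball with prescribed boundary characteristic foliation then identifies it with $(B, \xi_{st})$. For each $2$-face, I would analyse the characteristic foliation cube by cube: because the total twist across a cube is only $\pi$, once the cube-diagonal directions and the Legendrian arcs are chosen compatibly with the instantaneous contact planes, each characteristic foliation consists of near-linear leaves flowing transversally from one Legendrian edge into another, exhibiting no closed leaf and hence no overtwisted disk.

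The main obstacle is the bookkeeping in the third step: the cube-diagonals of the $5$-tetrahedron decomposition and the Legendrian arcs must be selected so that the characteristic foliation on every $2$-face is free of limit cycles despite the half-turn of rotation of $\xi_n$ inside each cube; a naive uniform choice of diagonals would produce precisely the limit cycles one is trying to avoid. This is also where the hypothesis $n \ge 2$ is essential: for $n=1$ the cubical grid has only $8$ vertices, far below the minimum number of vertices of any triangulation of $T^3$, and the rotation rate would overwhelm the available combinatorial room; for $n \ge 2$ the increased number of cubes exactly compensates the twisting and the construction closes up into a simplicial complex satisfying all three clauses of the definition of contact triangulation.
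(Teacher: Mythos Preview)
Your combinatorial skeleton --- subdivide $[0,1]^3$ into $(2n)^3$ cubes and triangulate them by alternating the two standard five-tetrahedron decompositions --- is exactly the paper's construction, and the vertex and simplex counts are correct. Your diagnosis of the restriction $n\ge 2$ (eight vertices cannot carry a simplicial triangulation of $T^3$) is also right.

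The genuine gap is your identification of $\xi_n$. You treat it as the structure whose contact planes rotate $n$ full turns as $z$ runs around the circle; that is the \emph{tight} Giroux family $\ker(\cos(2\pi nz)\,dx+\sin(2\pi nz)\,dy)$, for which Lemma~\ref{tight_triangulation} already does everything and none of your foliation analysis is needed. The paper's $\xi_n$ is built differently: it is obtained from $\xi_0$ by $n$ successive Lutz twists along the knot $K=\{(\tfrac12,\tfrac12,z)\}$, so each horizontal slice $[0,1]^2\times\{z\}$ carries $n$ nested overtwisted disks centred at $(\tfrac12,\tfrac12)$, the smallest having radius just above $r_0/(n+1)$. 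In particular $\partial_z$ is \emph{not} Legendrian (along $K$ it is transverse), so your claim about vertical edges fails; and your tightness step is internally inconsistent anyway, since a half-turn of rotation across a cube is nowhere near a translation-invariant plane field.

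With the correct $\xi_n$, verifying conditions (ii) and (iii) is far simpler than the characteristic-foliation bookkeeping you propose. Each tetrahedron in a scaled $A_0$ or $A_1$ cube has diameter at most $\sqrt{2}/(2n)=1/(\sqrt{2}\,n)$, and since $r_0$ may be chosen with $r_0>1/(2\sqrt{2})$ this is strictly less than $2r_0/n$, the diameter of the smallest overtwisted disk of $\xi_n$. Hence no overtwisted disk fits inside any $3$-simplex or any $2$-face; a small perturbation of the $2$-faces then makes all edges Legendrian. No limit-cycle analysis on $2$-faces is required.
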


\bigskip

\noindent {\bf Acknowledgements:} The first author was partially supported by DIICCSRTE, Australia (project AISRF06660) \& DST, India (DST/INT/AUS/P-56/2013 (G)) under the Australia-India Strategic Research Fund.
A major part of the work was done when the first author visited Stat-Math Unit,
Indian Statistical Institute (ISI), Kolkata during 2016 summer. He thanks ISI for hospitality. The second author thanks Prof. Mahan Mj.
for the support via J.~C. Bose Fellowship when the first part of this work was carried out. He also thanks ISI, Kolkata for
the support during the later part of the work.

\section{Preliminaries} \label{ot_disk_dfn}

In this section, we recall some notions that are relevant to the article. We also state some useful results with references.

Let $(M, \xi)$ be a contact 3-manifold. A smooth path $\gamma$ is called \emph{Legendrian} if $\gamma'(p) \in \xi_{\gamma(p)}$
for all points $p$ in the domain of the definition. Given an embedded closed surface $\Sigma$ (or with Legendrian boundary),
the singular line field $T_x \Sigma \cap \xi_x$ gives a singular foliation on the surface $\Sigma$. We call this singular foliation
as the \emph{characteristic foliation} on $\Sigma$. An overtwisted disk in $(M, \xi)$ is an embedded disk with Legendrian boundary
whose characteristic foliation is as shown in Figure \ref{otdisk}. A contact manifold $(M, \xi)$ is called \emph{overtwisted} if
there is an embedded overtwisted disk otherwise we call it \emph{tight}.

\begin{figure}[!ht]
\centering
\includegraphics[scale=.5]{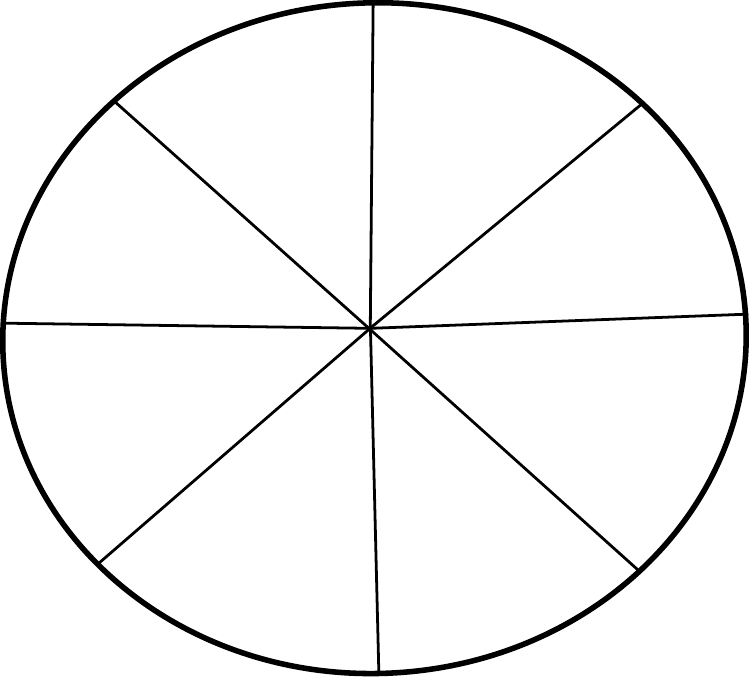}
\caption{Characteristic foliation on an overtwisted disk} \label{otdisk}
\end{figure}

We state following weaker version of Eliashberg's classification result for overtwisted
contact structures on 3-manifolds. This follows from his original result.

\begin{theorem}[Eliashberg, \cite{El89}]
Let $M$ be a closed oriented $3$-manifold. Every homotopy class of oriented and cooriented $2$-plane field on $M$ contains
an overtwisted contact structure. Any two overtwisted contact structures in the same homotopy class of $2$-plane field are
isotopic through contact structures.
\end{theorem}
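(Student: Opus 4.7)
The plan is to establish the existence and uniqueness parts separately, following the standard strategy of Eliashberg's h-principle for overtwisted contact structures in dimension three.

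For the \emph{existence} part, the first step is to produce some contact structure $\xi_0$ on $M$ by invoking the Lutz--Martinet theorem. Let $\eta$ denote the given homotopy class of oriented cooriented 2-plane fields and let $\zeta_0$ be the homotopy class of $\xi_0$. The homotopy classes of such plane fields are classified by two layers of obstructions, the relative $d^2$ invariant (an element of $H^2(M;\mathbb{Z})$ up to indeterminacy) and, once $d^2$ matches, the relative $d^3$ invariant in $\mathbb{Z}$. The next step is to bridge $\zeta_0$ to $\eta$ by Lutz twists: twisting $\xi_0$ along a positively transverse knot $K$ changes the plane-field class by a quantity controlled by the homology class $[K]\in H_1(M;\mathbb{Z})$, so a suitable collection of transverse knots (obtained by transverse $C^0$-approximation of smooth representatives in each homology class) realises any prescribed $d^2$-shift. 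After matching $d^2$, I would use Lutz twists along transverse unknots and right-handed trefoils sitting inside Darboux balls, exactly as in Theorem \ref{mainthm-2}, to adjust the relative $d^3$ invariant by $\pm 1$ at each step until $\eta$ is reached. Each Lutz twist manufactures an overtwisted disk in a neighbourhood of the twisting knot, so the resulting contact structure is automatically overtwisted.

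For the \emph{uniqueness} part, let $\xi_0$ and $\xi_1$ be two overtwisted contact structures on $M$ and let $\{\eta_t\}_{t\in[0,1]}$ be a homotopy of plane fields connecting them. The aim is to promote $\{\eta_t\}$ to a homotopy through contact structures; by Gray stability this yields a contact isotopy carrying $\xi_0$ to $\xi_1$. My strategy is localisation: I would find a 3-ball $B\subset M$ containing an overtwisted disk of $\xi_0$ such that, after a compactly supported isotopy of $M$, the contact structures $\xi_0$ and $\xi_1$ coincide on $M\setminus B$. The problem then reduces to the relative one on $B$, where both $\xi_0|_B$ and $\xi_1|_B$ are overtwisted and agree on $\partial B$. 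The core of the argument is the relative h-principle asserting that the space of overtwisted contact structures on $B$ with fixed germ on $\partial B$ is weakly homotopy equivalent to the corresponding space of plane fields; this is where the overtwisted disk inside $B$ provides the flexibility needed to realise any plane-field homotopy by a contact homotopy.

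The main obstacle is the uniqueness part, and within it the localisation step. Concretely, the hard step will be producing a single embedded 3-ball $B$ that simultaneously hosts a common overtwisted disk for both structures and on whose complement $\xi_0,\xi_1$ can be made to agree. This rests on a nontrivial fact: in an overtwisted contact 3-manifold every smoothly embedded disk with Legendrian boundary can, after a contact isotopy, be made to carry the overtwisted disk's characteristic foliation, so the overtwisted disks of $\xi_0$ and $\xi_1$ can be brought into coincidence through an ambient isotopy realised by contact structures along the plane-field homotopy $\{\eta_t\}$. Once the ball $B$ is fixed, the remaining step is to invoke Eliashberg's classification of contact structures on $B^3$ with a prescribed overtwisted characteristic foliation on the boundary, which gives the relative h-principle and closes the uniqueness argument. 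I would organise the write-up so that the Lutz-twist machinery developed earlier in the paper is reused in the existence part, while citing Eliashberg's \cite{El89} analysis of disks in overtwisted manifolds for the uniqueness part.
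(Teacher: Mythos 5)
First, note that the paper does not prove this statement at all: it is quoted verbatim as Eliashberg's theorem and attributed to \cite{El89}, so there is no internal argument for you to match. Your outline does capture the standard strategy (Lutz--Martinet existence, Lutz twists along transverse knots to hit the prescribed $d^2$ class, then twists along unknots and trefoils in Darboux balls to tune $d^3$, with each twist supplying an overtwisted disk), and the existence half could plausibly be assembled from the ingredients the paper itself uses in Theorems \ref{mainthm-1} and \ref{mainthm-2}, modulo care with the indeterminacy in the $d^3$ obstruction when the plane fields are only identified over the 2-skeleton.

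The genuine gap is in the uniqueness half, which is the actual content of Eliashberg's theorem. Your localisation step rests on the claim that, after a contact isotopy, an arbitrary embedded disk with Legendrian boundary in an overtwisted manifold can be made to carry the overtwisted characteristic foliation, and that the overtwisted disks of $\xi_0$ and $\xi_1$ can be brought into coincidence ``through contact structures along the plane-field homotopy $\{\eta_t\}$''; neither assertion is a standard fact, and the second presupposes exactly the kind of homotopy-through-contact-structures that the theorem is supposed to deliver. More importantly, once you have reduced to a ball $B$ with fixed germ on $\partial B$ and an overtwisted disk inside, you close the argument by ``invoking Eliashberg's classification of contact structures on $B^3$ with a prescribed overtwisted characteristic foliation on the boundary'' --- but that relative h-principle \emph{is} the theorem of \cite{El89}, so the proposal is circular rather than an independent proof. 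Eliashberg's actual argument proceeds differently: both structures are first normalised near the 2-skeleton of a fine triangulation (Darboux charts and elimination of singularities), and the extension over the 3-cells is then carried out by an explicit parametric analysis of characteristic foliations on spheres, with the overtwisted disk used to absorb the obstruction; none of that analysis appears in, or is replaced by, your sketch. As written, your proposal is an acceptable reading guide to \cite{El89}, but not a proof of the statement.
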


Thus, the classification of overtwisted contact structures is same as the classification of homotopy classes of
contact structures as 2-plane fields. To classify homotopy classes of oriented and cooriented 2-plane fields,
we need $d^2$ and $d^3$ invariants from obstruction theory. For any two 2-plane fields $\xi$ and $\eta$  the
invariant $d^2(\xi, \eta) \in H^2(M; \mathbb{Z})$ measures the relative obstruction for homotopy between $\xi$
and $\eta$ on 2-skeleton and $d^3(\xi, \eta) \in H^3(M; \mathbb{Z})$ invariant is an obstruction to extension
of homotopy of 2-plane fields onto 3-skeleton. We have the following fact from the obstruction theory.
See \cite[Section 4.2.3]{Ge} for details.

\begin{proposition}
Let $M$ be a closed oriented $3$-manifold. Then two $2$-plane fields $\xi$ and $\eta$ on $M$ are homotopic
if and only if $d^2(\xi, \eta)=0$ and $d^3(\xi, \eta)=0$.
\end{proposition}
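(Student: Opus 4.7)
The plan is to reduce the statement to a classical application of obstruction theory for sections of an $S^2$-bundle, and then identify the primary and secondary obstructions with $d^2$ and $d^3$.

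First, since every closed oriented $3$-manifold is parallelizable (Stiefel), I would fix a trivialization $TM \cong M \times \mathbb{R}^3$ together with a background Riemannian metric. An oriented, co-oriented $2$-plane $\xi_p \subset T_pM$ is then determined by its positive unit normal in $\mathbb{R}^3$, so $\xi$ is encoded by a smooth map $n_\xi : M \to S^2$, and homotopies of $2$-plane fields correspond precisely to homotopies of such maps. It therefore suffices to show that $n_\xi$ and $n_\eta$ are homotopic if and only if $d^2(\xi,\eta)=0$ and $d^3(\xi,\eta)=0$.

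Next, I would fix a CW structure on $M$ and attempt to construct a homotopy $H : M \times I \to S^2$ extending the given maps on $M \times \{0,1\}$, building it inductively over the skeleta of $M$. Over a $k$-cell of $M$ the corresponding relative cell of $(M \times I, M \times \partial I)$ has dimension $k+1$, and the obstruction to extending the partial homotopy across it takes values in $\pi_k(S^2)$. Since $\pi_0(S^2)=\pi_1(S^2)=0$, the extension is unobstructed on the $0$- and $1$-skeleta. The first genuine obstruction is a $2$-cocycle with values in $\pi_2(S^2) = \mathbb{Z}$ whose cohomology class is exactly $d^2(\xi,\eta) \in H^2(M;\mathbb{Z})$. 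If this class vanishes, one can modify the partial homotopy over the $2$-skeleton so that it extends over the $3$-skeleton; the next obstruction is then a $3$-cocycle with values in $\pi_3(S^2)=\mathbb{Z}$, and its class in $H^3(M;\mathbb{Z})$ is $d^3(\xi,\eta)$. Its vanishing gives the full homotopy, and conversely any homotopy witnesses both obstruction cochains as coboundaries, forcing the two classes to vanish.

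The main technical step is the well-definedness of $d^3$: a priori it depends on the choice of partial homotopy on the $2$-skeleton. I would handle this in the standard way by showing that two such choices differ by a cellular cochain in $C^2(M;\pi_2(S^2))$, and that this changes the $3$-dimensional obstruction cocycle by exactly its coboundary; hence the class in $H^3(M;\mathbb{Z})$ is unchanged. Since this is precisely the content of \cite[Section 4.2.3]{Ge}, I would appeal to that reference for the obstruction-theoretic machinery rather than redevelop it from scratch.
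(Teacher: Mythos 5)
The paper itself offers no argument for this proposition: it is quoted as a fact with a pointer to \cite{Ge}, so your decision to lean on that reference for the machinery is consistent with the paper. Your reduction (parallelizability of closed oriented $3$-manifolds, plane fields as maps $n_\xi:M\to S^2$, obstruction theory for a homotopy over $M\times I$ with the primary obstruction in $H^2(M;\pi_2(S^2))$ and the secondary one in $H^3(M;\pi_3(S^2))$) is the standard route and matches the spirit of the cited source.

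However, the step you single out as ``the main technical step'' is handled incorrectly, and this is a genuine gap. First, the bookkeeping does not typecheck: a change of the partial homotopy over the lower skeleta is measured by $\pi_2(S^2)$-valued data, while the $3$-dimensional obstruction cocycle is $\pi_3(S^2)$-valued, so ``changes the obstruction cocycle by exactly its coboundary'' cannot be the whole story. The coboundary argument only covers changes of the extension over the top choice level (the $\pi_3$-valued choices with the lower data fixed); the real issue is the dependence on the $\pi_2(S^2)$-valued choices over the $1$-cells of $M$ (equivalently, the choice of homotopy over the $2$-skeleton). Because $S^2$ is not aspherical, that dependence is governed by the Whitehead product $[\iota_2,\iota_2]=2\eta\in\pi_3(S^2)$, and the secondary obstruction class genuinely moves: altering the choice by $x\in H^1(M;\mathbb{Z})$ shifts it by $2\,c\cup x$, where $c=n_\xi^*(\text{generator})$ (so $2c=e(\xi)$). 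If your claimed independence were true, homotopy classes of plane fields inducing a fixed $\mathrm{spin}^c$ structure would always form a free $H^3(M;\mathbb{Z})\cong\mathbb{Z}$-torsor, contradicting Pontryagin's classification of maps $M^3\to S^2$ (e.g.\ on $S^1\times S^2$ with non-torsion $c$ there are only finitely many classes with given $c$). The proposition survives because ``$d^3(\xi,\eta)=0$'' is to be understood with the obstruction interpreted appropriately (vanishing for a suitable choice of homotopy over the $2$-skeleton, equivalently vanishing in the quotient by $e(\xi)\cup H^1(M;\mathbb{Z})$, the ambiguity disappearing when the relevant class is torsion, e.g.\ for $M=\mathbb{S}^3$); this is exactly the point that \cite{Ge} treats carefully, via the Pontryagin--Thom/framed-cobordism description rather than a cell-by-cell coboundary manipulation. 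So either quote the reference for the full statement, as the paper does, or repair the well-definedness discussion along these lines.
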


\subsection{Lutz Twist}\label{lt}

There is a geometric operation, called as \emph{Lutz twist}, that turns any contact structure into an
overtwisted contact structure without changing the underlying manifold. A knot $K$ in $(M, \xi)$ is
called \emph{transverse} knot if it is transverse to contact structure at any point on the knot. Given a
transverse knot $K$ in $(M, \xi)$, there is a neighborhood $N(K)$ of $K$ which is contactomorphic to
$(\mathbb{S}^1 \times D^2, \text{ker}(d\theta + r^2 d\phi))$, where $\theta$ is the coordinate along
$\mathbb{S}^1 $-direction and $(r, \phi)$ are polar coordinates on the disk $D^2$. We replace the contact
structure on this neighborhood by specifying a contact form $\alpha= h_1(r) d\theta + h_2(r) d \phi$ with
$h_1(r)$ and $h_2(r)$ satisfying the following conditions
\begin{enumerate}[(i)]
 \item $h_1(r) = -1$ and $h_2(r)= 0$ near $r =0$,
 \item $h_1(r)= 1$ and $h_2(r)=r^2 $ near $r=1$,
 \item $(h_1(r), h_2(r)) $ is never parallel to $(h'_1(r), h'_2(r)) $ for $r \neq 0$.
\end{enumerate}
Such a pair of functions exists. (See \cite[Section 4.3]{Ge} for details.) The change in geometry of
contact structure near the transverse knot can be described as follows. The new contact structure is
overtwisted. All the meridional disks of $N(K)$ are now overtwisted disks. Positively transverse knot
$K$ now becomes a negatively transverse knot in the new contact structure. This operation is called as
Lutz twist along $K$. We say that the new contact structure is obtained by Lutz twisting along $K$ from
$\xi$ and it will be denoted by $\xi^K $.


We state the change in the homotopy class of 2-plane field when a Lutz twist is performed along $K$.

\begin{proposition} [{\cite[Section 4.3.3]{Ge}}]
Let $K$ be a positively transverse knot in a closed oriented contact $3$-manifold $(M, \xi)$. Then
$d^2(\xi^K, \xi)= - {\rm PD}([K]) $, where ${\rm PD}([K])$ denotes the Poincar\'{e} dual of the homology class $[K]$.
\end{proposition}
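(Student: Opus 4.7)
The plan is to compute the relative obstruction cocycle representing $d^2(\xi^K,\xi) \in H^2(M;\mathbb{Z})$ directly, exploiting the fact that $\xi$ and $\xi^K$ agree on the complement of a tubular neighborhood $N(K) \cong S^1 \times D^2$ of $K$. First I would choose a CW structure on $M$ adapted to $K$: make $K$ a $1$-cell of the $1$-skeleton, let a meridional disk $D$ of $N(K)$ (meeting $K$ transversely at one positive point) be a $2$-cell, and arrange the remaining $2$-cells so that each either lies in $M \setminus N(K)$ or is a longitudinal rectangle in $N(K)$ on which $\xi = \xi^K$ already holds. On every such ``trivial'' $2$-cell, the constant homotopy between the two plane fields gives zero local obstruction, so the entire cocycle is concentrated on $D$.

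The key local computation is the value of the obstruction cocycle on $D$. Using the Geiges normal form $\xi = \ker(d\theta + r^2 d\phi)$ on $N(K)$ and the twisted form $\xi^K = \ker(h_1(r) d\theta + h_2(r) d\phi)$, I would fix a trivialization of the Grassmannian bundle of cooriented $2$-planes over $D$ so that $\xi|_D$ and $\xi^K|_D$ both become maps $D \to S^2$. The boundary conditions (i)--(iii) on $(h_1,h_2)$ force $(h_1(r),h_2(r))$ to wind once around the origin in $\mathbb{R}^2$ as $r$ traverses $[0,1]$, which translates into $\xi^K|_D$ rotating by a full $2\pi$ relative to $\xi|_D$ about the tangent direction of $K$. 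Collapsing $\partial D$ (where $\xi = \xi^K$) to a point, the ``difference'' map $D/\partial D \cong S^2 \to S^2$ therefore has degree $\pm 1$, so the cocycle takes value $\pm 1$ on $D$ and vanishes on all other $2$-cells.

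Under the cellular Poincar\'e duality isomorphism $H^2(M;\mathbb{Z}) \cong H_1(M;\mathbb{Z})$, a cocycle that picks up $\pm 1$ on $D$ is dual to $\pm [K]$, yielding $d^2(\xi^K,\xi) = \pm \, {\rm PD}([K])$. The main obstacle is pinning down the sign as $-1$: this requires consistently tracking the orientation of $M$, the co-orientation of $\xi$, the positive transversality of $K$, and the induced orientation on $D$ throughout the rotation computation. I would fix the sign by comparing against a concrete model --- for instance a transverse circle in the standard contact $S^1 \times S^2$, where both $[K]$ and $d^2(\xi^K,\xi)$ can be read off directly --- or, since the result is already classical, defer to the detailed sign analysis in \cite[Section 4.3.3]{Ge}.
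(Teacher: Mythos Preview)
The paper does not give its own proof of this proposition; it is stated with a citation to \cite[Section 4.3.3]{Ge} and used as a black box. So there is no in-paper argument to compare your proposal against.

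That said, your sketch is the standard obstruction-theoretic computation and is essentially the argument carried out in the cited reference: localize the difference between $\xi$ and $\xi^K$ to a tubular neighborhood of $K$, evaluate the relative obstruction cocycle on a meridional $2$-cell via the degree of the induced map $D/\partial D \to S^2$, and identify the resulting cocycle with $\pm{\rm PD}([K])$ under cellular Poincar\'e duality. The only substantive gap you yourself flag---fixing the sign as $-1$---is real: it depends on the conventions for co-orientation, the orientation of $M$, and the direction of the Lutz half-turn, and cannot be resolved without either a careful bookkeeping of all these choices or a model computation. Deferring to \cite[Section 4.3.3]{Ge} for that step is exactly what the paper itself does by citing the result outright.
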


In particular, we note that if $K$ is a null-homologous knot in $M$ then $d^2(\xi^K, \xi)=0 $. We will
use this fact later in the article.

Assume that $K$ is a null-homologous transverse knot in $(M, \xi)$. Then the self-linking number $\text{sl}(K) $ is well-defined (see \cite[Definition 3.5.28]{Ge}). The Lutz twist along such $K$ changes the relative $d^3 $-invariant as follows.

\begin{proposition}[{\cite[Remark 4.2.6]{Ge}}]
Let $K$ be a positively transverse null-homologous knot in $(M, \xi)$. Then $d^3(\xi^K, \xi) = {\rm sl}(K)$.
\end{proposition}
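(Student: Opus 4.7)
Since $K$ is null-homologous, the previous proposition gives $d^2(\xi^K,\xi)=-{\rm PD}([K])=0$, so the relative invariant $d^3(\xi^K,\xi)\in H^3(M;\mathbb{Z})\cong\mathbb{Z}$ is well defined and it makes sense to try to compute it as an integer. The plan is to localise the obstruction on the neighbourhood of $K$ where the Lutz twist actually takes place, and then compute the localised integer via a Pontryagin--Thom argument that uses a Seifert surface of $K$. Concretely, on $N(K)\cong S^1\times D^2$ we have $\xi=\ker(d\theta+r^2 d\phi)$ and $\xi^K=\ker(h_1(r)d\theta+h_2(r)d\phi)$; since $\xi=\xi^K$ on $M\setminus N(K)$ and the coorienting normals agree on $\partial N(K)$ by conditions (i)--(ii) on $(h_1,h_2)$, a partial homotopy between the two plane fields can be chosen to be the identity outside $N(K)$. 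Hence the only obstruction to extending it across the $3$-cells of $N(K)$ lives in $H^3(N(K),\partial N(K);\pi_3(S^2))\cong\mathbb{Z}$, and this integer equals $d^3(\xi^K,\xi)$.

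Pick a Seifert surface $\Sigma$ for $K$ (which exists since $[K]=0$) and let $\tau_\Sigma$ be the trivialisation of $TM|_{N(K)}$ obtained from the Seifert framing of $K$ together with the unit tangent to $K$. In $\tau_\Sigma$ the positive coorienting unit normals $\nu,\nu^K$ of $\xi,\xi^K$ become maps $N(K)\to S^2$ that agree on $\partial N(K)$, so they glue to a single map $\Phi$ from the double $DN(K)\cong S^1\times S^2$ to $S^2$. Standard relative obstruction theory for $2$-plane fields identifies $d^3(\xi^K,\xi)$ with the Hopf-type invariant of $\Phi$, which by Pontryagin--Thom equals the linking number in $DN(K)$ of the framed preimages of two generic regular values of $S^2$.

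An explicit computation in the local model then finishes the proof. Working first in the naive framing $\tau_0=(\partial_\theta,\partial_x,\partial_y)$ of $N(K)$, the normal to $\xi$ sweeps only the short arc $(1,r^2)/\sqrt{1+r^4}$ and carries no net rotation, while by conditions (i)--(iii) the normal to $\xi^K$ traces $(h_1(r),h_2(r))/\sqrt{h_1^2+h_2^2}$ and makes a single rotation in the $(\partial_\theta,\partial_\phi)$-plane from $-\partial_\theta$ at $r=0$ to $\partial_\theta+\partial_\phi$ at $r=1$. Passing from $\tau_0$ to $\tau_\Sigma$ introduces an extra twist along $K$ equal to the framing difference between the contact framing of $K$ (which is essentially $\tau_0|_K$) and the Seifert framing, and this difference is, by definition of the self-linking number, exactly ${\rm sl}(K)$. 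The universal rotations of $\nu^K$ and $\nu$ cancel against the boundary matching on $\partial N(K)$, and only the framing correction survives, yielding $d^3(\xi^K,\xi)={\rm sl}(K)$. The main obstacle is this last bookkeeping: separating the ``universal'' rotation of $(h_1,h_2)$ inside the model $N(K)$ (which is independent of the ambient embedding) from the extrinsic twist carried by the Seifert framing (which remembers how $K$ sits inside $M$), and verifying that after cancellation only ${\rm sl}(K)$ is left. This is precisely the computation carried out in \cite[Remark 4.2.6]{Ge}, which we invoke.
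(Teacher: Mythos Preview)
The paper does not supply its own proof of this proposition: it is quoted verbatim as a known fact from Geiges' book, with the citation \cite[Remark 4.2.6]{Ge} in the header and no argument following. So there is nothing in the paper to compare your proposal against.

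Your sketch is a reasonable expansion of the standard obstruction-theoretic computation that Geiges carries out, and you yourself close by invoking that same reference. In that sense you and the paper agree: the result is imported from \cite{Ge} rather than reproved. If your goal was to match the paper, a one-line citation would have sufficed; if your goal was an independent proof, note that the final ``bookkeeping'' step you flag---separating the universal rotation of $(h_1,h_2)$ from the Seifert-framing twist and checking that only ${\rm sl}(K)$ survives---is exactly where the substance lies, and you ultimately defer it to the reference rather than carry it out, so the proposal remains a sketch rather than a self-contained argument.
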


To compute self-linking number of a transverse knot $K$ in $(\mathbb{R}^3, \xi_{st})$, we need the following known fact (for instance see \cite[Proposition 3.5.32]{Ge}).

\begin{proposition}
The self linking number $sl(K)$ equals the writhe of the front projection of $K$.
\end{proposition}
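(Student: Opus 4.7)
The plan is to compute $\mathrm{sl}(K)$ directly from its definition as a linking number with a suitable pushoff, choosing the pushoff so that the signed crossings that appear in a planar diagram are exactly those of the front projection. Recall that for a null-homologous positively transverse knot $K$ bounding a Seifert surface $\Sigma$, one sets $\mathrm{sl}(K) = \mathrm{lk}(K, K^*)$, where $K^*$ is obtained by pushing $K$ off along any nowhere-zero section of $\xi|_\Sigma$, the integer being independent of the chosen section. In $(\mathbb{R}^3, \xi_{st} = \ker(dz + x\,dy))$ the vector field $\partial_x$ is everywhere tangent to $\xi_{st}$ and globally non-vanishing, so it serves as a universal contact pushoff direction. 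The first step, then, is to take $K^* := K + \epsilon\,\partial_x$ for $\epsilon > 0$ small and reduce the proposition to the combinatorial computation of $\mathrm{lk}(K, K^*)$.

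Next, I would translate $\mathrm{lk}(K, K^*)$ into a signed crossing count of a planar diagram. The front projection $\pi(x,y,z) = (y,z)$ collapses $K$ and $K^*$ onto the same planar curve, so to separate them I would work with the tilted projection $\pi_\delta(x,y,z) = (y + \delta x,\, z)$ for a small $\delta > 0$, under which $\pi_\delta(K^*)$ is $\pi_\delta(K)$ translated by $\delta\epsilon$ in the first coordinate. At each self-crossing of the front projection, where two strands of $K$ lying over $(y_0, z_0)$ sit at heights $x_1 < x_2$, the four strands of $K \sqcup K^*$ acquire the $x$-order $x_1,\, x_1+\epsilon,\, x_2,\, x_2+\epsilon$, and a short implicit-function argument in the coordinates $(\tau,\sigma)$ near the crossing shows that exactly two transverse $K$-$K^*$ crossings appear in $\pi_\delta$ near the original one. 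A direct sign computation — using that the relevant tangent vectors are perturbations of the original pair — shows both inherit the sign of the original self-crossing. Via $\mathrm{lk}(K, K^*) = \tfrac{1}{2}\sum_c \mathrm{sgn}(c)$ taken over all $K$-$K^*$ crossings in a generic projection, each front crossing then contributes exactly $\mathrm{sgn}(c)$, and the total sum equals the writhe $w(\pi(K))$.

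The main obstacle, and the last step I would carry out, is ruling out anomalous contributions near points where $K$ is vertically tangent in the front, that is where $\dot y = 0$, since there $\pi$ fails to be an immersion and in principle strands of $K$ and $K^*$ could interact unexpectedly in $\pi_\delta$. Here the positively transverse condition $\dot z + x\dot y > 0$ supplies the needed estimate: at any point with $\dot y = 0$ it forces $\dot z > 0$, so $K$ passes monotonically upward in $z$ through a small neighbourhood and the same holds for $K^*$ by continuity. A direct local argument then shows that no additional $K$-$K^*$ crossings appear in $\pi_\delta$ near such points once $\delta$ and $\epsilon$ are chosen sufficiently small, and that the same condition prevents cusp-like contributions from altering the sign count at ordinary crossings. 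With this sign bookkeeping in place one concludes $\mathrm{sl}(K) = \mathrm{lk}(K, K^*) = w(\pi(K))$, establishing the proposition.
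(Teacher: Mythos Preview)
The paper does not actually prove this proposition; it is quoted as a known fact with a reference to Geiges' textbook, so there is no proof in the paper to compare against. Your outline is a reasonable route to the result, but the final step misidentifies the obstacle.

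You worry about anomalous $K$--$K^*$ crossings near points where $\dot y = 0$. In fact nothing can go wrong there: the transverse inequality $\dot z + x\dot y > 0$ forces $(\dot y,\dot z)\neq(0,0)$ everywhere, so the front of a transverse knot is an \emph{immersion} (no cusps), and since $\dot z>0$ at such points the projected curve is locally a graph over $z$ and a small horizontal translate creates no nearby intersections. The genuine extra crossings arise elsewhere. Because $\pi_\delta(K^*)$ is the translate of $\pi_\delta(K)$ by the \emph{horizontal} vector $(\delta\epsilon,0)$, near-diagonal solutions of $\pi_\delta(K)(t)=\pi_\delta(K^*)(s)$ with $t\approx s$ occur where the tangent to the projected curve is horizontal, i.e.\ at points with $\dot z=0$, not $\dot y=0$. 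A local model $(u,v)=(t,ct^2)$ shows each such horizontal tangency contributes exactly one $K$--$K^*$ crossing, of sign $\operatorname{sgn}(\ddot z)$; the transverse condition does not suppress these. What saves the count is that on a closed curve the local maxima and minima of $z$ are equinumerous, so these terms cancel in pairs. Once you verify this cancellation, your argument concludes correctly with $\mathrm{sl}(K)=w(\pi(K))$. A cleaner variant avoids tilting altogether: since the front is an immersion the blackboard framing $v_{bb}$ lies in the $yz$-plane and is therefore pointwise independent of $\partial_x$ in the normal bundle of $K$, so the two framings are homotopic and $\mathrm{sl}(K)=\mathrm{lk}(K,K+\epsilon\partial_x)=\mathrm{lk}(K,K+\epsilon v_{bb})=w(\pi(K))$.
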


Recall that the {\em writhe} of an oriented knot diagram is given by number of positive crossings minus the number of negative crossings and the {\em front projection} of $K$ in $(\mathbb{R}^3, \xi_{st})$ is the projection of $K$ onto $y-z$ plane. Thus, in $(\mathbb{S}^3, \xi_{std})$, $d^3(\xi^K, \xi)$-invariant equals the writhe of the
front projection of $K$. Figure \ref{fig:trefoil} shows that the writhes of unknot $U$ and right-handed trefoil $K$ are $-1$ and $+1$ respectively. We will use this elementary observation later.

\begin{figure}[!ht]
\centering{
\resizebox{70mm}{!}{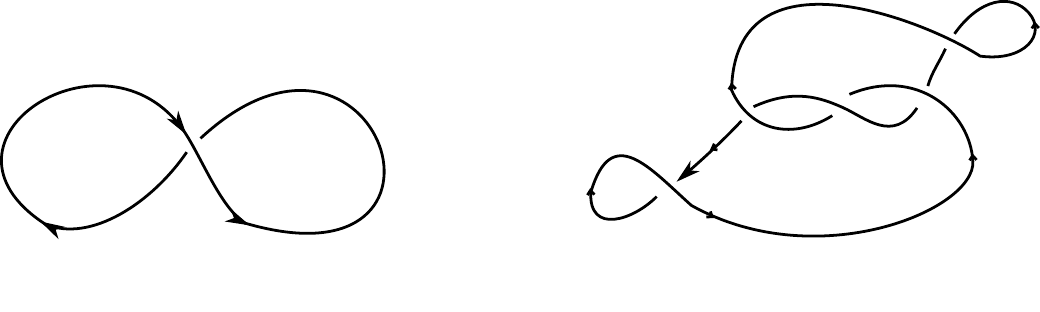}
\caption{Writhe of the front projection of the unknot and right-handed trefoil.} \label{fig:trefoil}
}
\end{figure}

The following statements about contact triangulations will be useful later.

\begin{lemma}
Let $(M, \xi) $ be a closed contact $3$-manifold. Then there exists a contact triangulation of $(M, \xi)$.
\end{lemma}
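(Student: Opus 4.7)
The plan is to begin with an arbitrary smooth triangulation of $M$, which exists since $M$ is a closed smooth $3$-manifold, and then refine and perturb it until the three defining conditions of a contact triangulation are met. By Darboux's theorem, every point of $M$ has an open neighborhood contactomorphic to an open subset of $(\mathbb{R}^3, \xi_{st})$, so $M$ admits a finite open cover $\{U_i\}$ by such Darboux balls. Applying the Lebesgue number lemma to this cover together with iterated barycentric subdivision, I would produce a smooth triangulation $T$ of $M$ in which the closed star of every vertex---and therefore every closed $3$-simplex---is contained entirely inside some $U_i$. This is the subdivision that prepares condition (iii).

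Next I would straighten the $1$-skeleton into Legendrian arcs. The Legendrian approximation theorem (see for instance Theorem~3.3.1 in Geiges' monograph) says that any smooth path in a contact $3$-manifold with prescribed endpoints can be $C^0$-approximated, as closely as we like, by a Legendrian path with the same endpoints. I would apply this inside each ambient Darboux ball $U_i$ to every edge of $T$, choosing the approximation so small that each new edge still lies in the same $U_i$ as the original and the collection of new edges still forms an embedded graph. Because a smooth $3$-simplex deformation retracts onto any sufficiently small neighborhood of its $1$-skeleton, I can extend this edge-by-edge perturbation to an ambient smooth isotopy supported in a small neighborhood of the $1$-skeleton of $T$, producing a new triangulation $\Sigma$ of $M$ whose edges are Legendrian and whose closed $3$-simplices still lie inside Darboux balls.

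Conditions (ii) and (iii) are then essentially free. Each $3$-face $\sigma$ of $\Sigma$ is an embedded $3$-ball sitting inside some $U_i$, so the restriction of $\xi$ to the interior of $\sigma$ embeds contactomorphically into $(\mathbb{R}^3, \xi_{st})$, which is (iii). For (ii), each $2$-face $F$ of $\Sigma$ lies in a Darboux ball; since the standard contact structure on $\mathbb{R}^3$ is tight and an overtwisted disk cannot embed in a tight contact $3$-manifold, $F$ cannot be an overtwisted disk.

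The main technical obstacle is the second step: the Legendrian approximation must be performed edge by edge in a consistent way so that the resulting arcs still glue to form an embedded $1$-skeleton combinatorially isomorphic to that of $T$, and the perturbation must then be extended to a PL homeomorphism of $M$ that moves no $3$-simplex outside its ambient Darboux ball. Once this step is carried out carefully, the tightness of Darboux balls delivers conditions (ii) and (iii) automatically.
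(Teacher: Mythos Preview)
Your proposal is correct and follows essentially the same strategy as the paper's proof: cover $M$ by finitely many Darboux balls, subdivide a smooth triangulation until every $3$-simplex lies in one of them (giving (ii) and (iii) for free via tightness), and then perturb the $1$-skeleton to be Legendrian while keeping each simplex inside its Darboux ball. The only cosmetic difference is that the paper phrases the Legendrian step as ``perturb the $2$-faces to make the edges Legendrian'' with a reference to Etnyre's Lemma~4.8, whereas you invoke the Legendrian approximation theorem directly and then extend to the higher skeleta; these amount to the same thing.
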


\begin{proof}
The proof is a slight modification of the argument given in the proof of Lemma 4.8 in \cite{Et}. Consider a finite cover
of $M$ by Darboux charts. We take a triangulation $\Sigma$ of $M$ such that every 3-face lies in some Darboux chart.
Thus, the conditions (ii) and (iii) of the definition are satisfied as $\xi $ restricted to each Darboux ball is tight.
Now we perturb the 2-faces to make the edges Legendrian using weak form of Bennequin inequality (for details, see the proof
of \cite[Lemma 4.8]{Et}). Thus, condition (i) is also satisfied.
\end{proof}


\begin{lemma}\label{tight_triangulation}
Let $(M,\xi)$ be a tight contact manifold. Let $\Sigma$ be any triangulation. Then the $2$-faces of $\Sigma$ can be perturbed
slightly so that we get a contact triangulation $\Sigma'$ of $(M, \xi)$ which is isomorphic to $\Sigma$ $($i.e., there is a
self-diffeomorphism of $M$  taking $\Sigma'$ to $\Sigma)$.
\end{lemma}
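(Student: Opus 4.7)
The plan is to treat the three conditions in the definition of a contact triangulation separately. Since $(M,\xi)$ is tight, $M$ contains no embedded overtwisted disk whatsoever, so for \emph{any} triangulation of $M$ conditions (ii) and (iii) are already in force: each 2-face, being an embedded disk in a tight manifold, cannot be overtwisted, and the interior of each 3-face is an open 3-ball on which the restriction of $\xi$ is tight. By Eliashberg's uniqueness theorem for tight contact structures on $B^{3}$ (tight structures are determined by the boundary characteristic foliation), this restriction is contactomorphic to the restriction of $\xi_{st}$ to some open subset of $\mathbb{R}^{3}$. Thus only condition (i), the Legendrianness of the edges, has to be arranged by perturbation.

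For condition (i) I would invoke the Legendrian approximation theorem: any smooth embedded arc in a contact 3-manifold admits an arbitrarily $C^{0}$-small approximation, with its endpoints held fixed, by a Legendrian arc that is smoothly ambient-isotopic to the original. I would apply this edge by edge, choosing each approximation small enough (a) to keep the new arc embedded and disjoint from all previously perturbed edges away from common endpoints, and (b) so that the Legendrian tangent directions at each vertex $v$ are mutually distinct for the finitely many edges incident to $v$; this is possible because the set of Legendrian tangent directions at $v$ is a circle while only finitely many edges meet at $v$. By first refining $\Sigma$ (if necessary, though the statement only asks for the same combinatorial type, so I would instead first shrink the edges slightly) one may assume the open star of each vertex sits inside a Darboux chart, and carry out the approximation inside these charts using the explicit model $(\mathbb{R}^{3},\xi_{st})$.

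Once the $1$-skeleton has been replaced by a Legendrian one, extend the perturbation to the higher-dimensional faces by smoothing each 2-face relative to its new boundary and each 3-face relative to its new $\partial$; since the boundary has moved only slightly, each new face is a small smooth isotopy of the original, and for sufficiently small perturbations the combinatorial intersection pattern is preserved, so the family assembles into a triangulation $\Sigma'$ combinatorially isomorphic to $\Sigma$. To realise the isomorphism as a self-diffeomorphism of $M$, I would package the sequence of small perturbations into a single ambient isotopy $\Phi_{t}\colon M\to M$ via the isotopy extension theorem applied successively in neighborhoods of the vertices, edges, and 2-faces; then $\Phi_{1}$ carries $\Sigma$ simplexwise onto $\Sigma'$.

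The step I expect to be the main obstacle is the compatibility of the Legendrian perturbations of the different edges sharing a common vertex: the perturbed arcs must meet the original vertex with prescribed distinct tangent directions, remain embedded, and not create new intersections with the nearby 2-faces before they too are perturbed. This is handled by localising the whole construction at each vertex inside a Darboux chart and invoking the normal form for Legendrian arcs in $(\mathbb{R}^{3},\xi_{st})$, where arcs with distinct Legendrian tangent directions at a common endpoint can be separated off from one another by an arbitrarily small isotopy — after which the remaining face-level extensions are routine smoothings.
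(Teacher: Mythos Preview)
Your proposal is correct and follows the same line as the paper's proof: tightness disposes of conditions (ii) and (iii) outright, and condition (i) is achieved by a $C^0$-small Legendrian approximation of the $1$-skeleton extended to the higher faces. The paper's argument is in fact only two sentences, pointing to the perturbation of $2$-faces done in the preceding lemma (which in turn cites Etnyre's Lemma~4.8); you have simply spelled out the Legendrian approximation and the vertex-compatibility issues in more detail than the paper does.
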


\begin{proof}
As $\xi$ is tight, condition (iii) of the definition is satisfied. We make edges Legendrian by perturbing $2$-faces slightly as before.
\end{proof}

\section{Examples}

In this section, we describe examples of contact triangulation with details. Some of the examples will be useful in the later part of the article.
We identify a simplicial complex by the set of its maximal simplices. We also identify a simplex with the set of vertices in it.

We know that the boundary complex $S^3_5=\partial \Delta^4$ of a 4-simplex $\Delta^4$ is a 5-vertex triangulation of $\mathbb{S}^3$.
It is unique and minimal. Therefore, by Lemma \ref{tight_triangulation}, we can get a vertex minimal (with 5 vertices) contact
triangulation of $(\mathbb{S}^3, \xi_{std})$. In Example \ref{explicit_example}, we present an explicit contact triangulation of
$(\mathbb{S}^3, \xi_{std})$ with 8 vertices.

\begin{example}\label{explicit_example}
We write $\mathbb{S}^3 = \{(x_1, y_1, x_2, y_2) : x_1^2 + y_1^2 + x_2^2 + y_2^2 =1\}$ for the unit sphere in $\mathbb{R}^4$.
Consider the $1$-form on $\mathbb{R}^4$
 \begin{align*}
\beta &= x_2dx_1-x_1dx_2 + y_2 dy_1 -y_1dy_2.
\end{align*}
Notice that $d \beta $ is a symplectic form on $\mathbb{R}^4 $ and the radial vector field $Z= x_1 \frac{\partial}{\partial x_1}
+ y_1 \frac{\partial}{\partial y_1} + x_2 \frac{\partial}{\partial x_2} + y_2 \frac{\partial}{\partial y_2} $ is a Liouville vector
field (i.e., $\mathcal{L}_Z d \beta = d \beta $). So, the 1-form $i_z d \beta = \beta$ restricts to a contact form on $\mathbb{S}^3$.
Let $\xi$ denote the contact structure given by $\text{ker} \beta \cap T \mathbb{S}^3  $. Then
\begin{align*}
\xi & =\text{Span} \left(\left \{ y_1 \frac{\partial}{\partial x_1} - x_1 \frac{\partial}{\partial y_1} - y_2 \frac{\partial}{\partial x_2} + x_2
\frac{\partial}{\partial y_2} , -y_2 \frac{\partial}{\partial x_1} + x_2 \frac{\partial}{\partial y_1} - y_1 \frac{\partial}{\partial x_2} + x_1 \frac{\partial}{\partial y_2} \right \}\right).
\end{align*}
The unit ball $B^4$ with the symplectic form $d \beta$ is a strong filling of the contact sphere $(\mathbb{S}^3, \xi)$.
Therefore, $\xi$ is a tight contact structure. By the uniqueness of the tight contact structure on 3-sphere due to Eliashberg,
we see that $\xi$ is isotopic to $\xi_{std}$.

Let $e_i$, for $1 \leq i \leq 4$, denote the unit vector in $\mathbb{R}^4$ whose $i$-th component is $1$ in the standard coordinate
system. Now, consider the triangulation $\Sigma$ of $\mathbb{S}^3$ whose vertex set is
$ V= \{u_1 =e_1, u_2 =-e_1, v_1 =e_2, v_2= -e_2, w_1=e_3, w_2=-e_3, z_1=e_4, z_2= -e_4 \}$ and the set of 3-faces
(these are spherical in shape) is $\{ u_i v_j w_k z_{\ell} : 1 \leq i, j, k ,l \leq 2 \}$, where $u_i v_j w_k z_{\ell}
= \{(x_1, y_1, x_2, y_2) \in \mathbb{S}^3 : (-1)^{i-1}x_1,(-1)^{j-1} y_1, (-1)^{k-1}x_2, (-1)^{l-1}y_2 \geq 0\}$.

The group ${\rm Aut}(\Sigma)$ of automorphisms of $\Sigma$ is isomorphic to $S_4 \times (\mathbb{Z}_2)^4$ (where $S_4$
denotes the group of permutations on $4$ symbols) and is generated by
\begin{align*}
 \{(u_1,v_1)(u_2,v_2), (u_1,w_1)(u_2,w_2), (u_1,z_1)(u_2,z_2), (u_1,u_2), (v_1,v_2), (w_1,w_2), (z_1,z_2)\}.
\end{align*}
This group ${\rm Aut}(\Sigma)$ acts transitively on the set of edges (respectively, 2-faces). Also, each automorphism of
$\Sigma$ induces a diffeomorphism of $\mathbb{S}^3$ in an obvious way and preserves the contact structure $\xi$. Hence,
${\rm Aut}(\Sigma)$ acts on $(\mathbb{S}^3, \xi)$ by contactomorphisms.

The edge $u_1v_1$ is the arc $c(t) = (2t^2-2t+1)^{-{1}/{2}}(t, 1-t, 0, 0)$, $0 \leq t \leq 1$.
Then $c'(t) =(2t^2-2t+1)^{-{3}/{2}}(1-t,-t, 0, 0 )$. Clearly, the tangent vector $c'(t) \in \xi_{c(t)}$. So, $c(t)$ is Legendrian.
  Since ${\rm Aut}(\Sigma)$ acts transitively on the edges, all the edges are Legendrian arcs.

Now, let $\Delta$ be the 2-face $u_1v_1w_1$. Let $p=p(t_1,t_2,t_3) = (t_1^2+t_2^2+t_3^2)^{-{1}/{2}}(t_1, t_2,t_3, 0)$, where
$t_1+t_2+t_3 =1$, $0 \leq t_1, t_2, t_3 \leq 1$, be a point in $\Delta$. If $p \in \Delta^{\circ}$ then
\begin{align*}
\frac{\partial p }{\partial t_1} & = (t_1^2+t_2^2+t_3^2)^{- \frac{3}{2}}(t_1^2 +t_2^2+t_3^2+ t_1t_3, t_1t_3, -(t_1^2+t_2^2), 0).
 \end{align*}
So, the tangent vector $\frac{\partial p }{\partial t_1}$ to $\Delta$ at $p$ is not in $\xi_p$. Therefore, $T_p \Delta \neq \xi_p$.
Hence, $\Delta$ is not an overtwisted disk. Since ${\rm Aut}(\Sigma)$ acts transitively on $2$-faces, no 2-face is an overtwisted disk.

Finally, $(\mathbb{S}^3, \xi)$ is tight implies that no 3-face contains an overtwisted disk. Thus, $\Sigma$ is a contact triangulation
of $(\mathbb{S}^3, \xi)$.
\end{example}

\begin{example} \label{eg:S2xS1v10}
The standard tight contact structure on $\mathbb{S}^2 \times  \mathbb{S}^1 \subset \mathbb{R}^3 \times \mathbb{S}^1$ is given by
$\text{ker}(zd\theta +xdy-ydx)$, where $(x,y,z)$ are coordinates on $\mathbb{R}^3$ and $\theta$ denotes the coordinate along $\mathbb{S}^1$-direction.

Consider the 4-dimensional simplicial complex $X= \{v_iv_{i+1}v_{i+2}v_{i+3}v_{i+4}; i \in \mathbb{Z}_{10}\}$ on the vertex set
$V = \{ v_i : i \in \mathbb{Z}_{10}\}$. Then its boundary
\begin{align} \label{eq:S21}
S^{2,1}_{10}  := \partial X & = \left \{v_iv_{i+1}v_{i+2}v_{i+4},  v_iv_{i+1}v_{i+3}v_{i+4}, v_iv_{i+2}v_{i+3}v_{i+4} : i \in \mathbb{Z}_{10} \right\}
\end{align}
triangulates $\mathbb{S}^2 \times \mathbb{S}^1$ as shown in \cite{BD08}. The complex $S^{2,1}_{10}$ was first described
by Walkup in \cite{Wa70}. Walkup also showed that any triangulation of $\mathbb{S}^2\times \mathbb{S}^1$ contains at least 10 vertices.
By Lemma \ref{tight_triangulation}, this minimal triangulation $S^{2,1}_{10}$ of $\mathbb{S}^2\times \mathbb{S}^1$ can be turned
into a 10-vertex contact triangulation of $\mathbb{S}^2 \times  \mathbb{S}^1$ with standard tight contact structure.
\end{example}

We give some triangulations of the solid torus $\mathbb{D}^2 \times \mathbb{S}^1$. Using these triangulations we shall describe
contact triangulations of some contact 3-manifolds.

\begin{example}\label{2-torus}
We know that any triangulation of the $2$-torus $S^1\times S^1$ needs at least 7 vertices and there is a unique 7-vertex
triangulation of the torus (namely, $\tau$ in Figure 3 (a)). This unique 7-vertex minimal triangulation of the torus was
first described by M\"obius in \cite{Mo}. Therefore, any triangulation of the solid torus needs at least 7 vertices. Here,
we explicitly describe smooth triangulations for the solid torus with 7 vertices. First, consider $\mathbb{S}^1 \times
\mathbb{S}^1 \subset \mathbb{C}^2$ given by $\{(z_1,z_2) \, :\,  |z_1|=1, |z_2|=1\}$ and the simple closed curves given by
\begin{subequations}
\begin{align}
C_1 & = \left\{(e^{4 \pi i t }, e^{2\pi i t}): t \in [0,1] \right \}, \\
C_2 &= \left\{(e^{ 6\pi i s}, e^{-4 \pi i s}) : s \in [0,1]   \right \}, \\
C_3 &= \left\{(e^{2\pi i \theta}, e^{ -6\pi i \theta}) : \theta \in [0,1] \right\}.
\end{align}
\end{subequations}


\begin{figure}
\centering
\subfigure[7-vertex triangulation \boldmath{$\mathcal{T}$} of 2-torus ]{
\resizebox{67mm}{!}{\label{dis_triang} 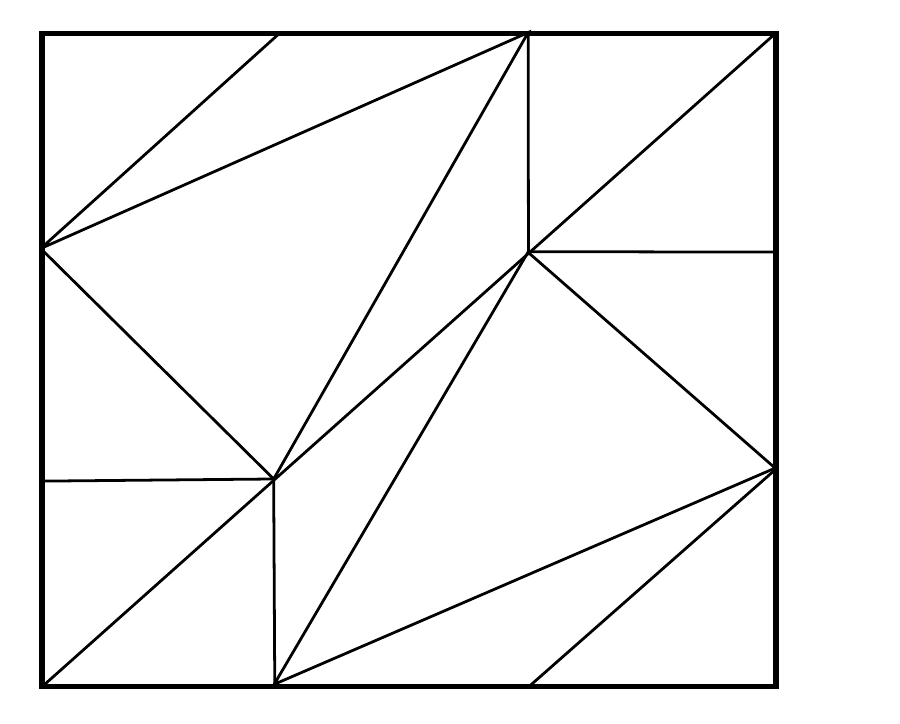 }
}
\subfigure[Three smooth curves on 2-torus]{\label{smooth_2-torus} \resizebox{66mm}{!}{
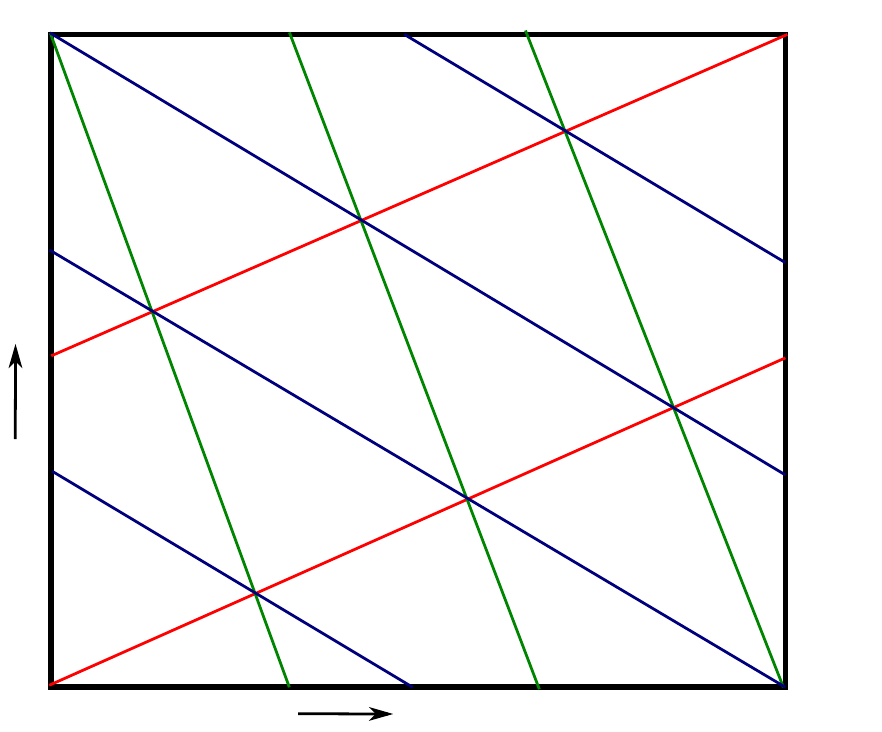
}}
\caption{Discrete and Smooth 7-vertex triangulations of 2-torus}



\end{figure}

Note that $C_1 \cap C_2 =C_2 \cap C_3 = C_1 \cap C_3 = \{u_k :=(e^{4 \pi i k/7}, e^{2 \pi i k /7}) \, : \, 0\leq k \leq 6\}$ (see Figure \ref{smooth_2-torus}). Then the seven points $u_0, u_1,\dots, u_6$ divide each curve $C_i$, $1 \leq i \leq 3$, into seven arcs. More explicitly, $C_1$ is the union of the seven arcs $u_iu_{i+1}$, $C_2$ is the union of the seven arcs $u_iu_{i+4}$ and $C_3$ is the union of the seven arcs $u_iu_{i+5}$, $0\leq i \leq 6$  (additions are modulo $7$). These 21 arcs divide the boundary torus into 14 triangular regions, namely, $u_iu_{i+1}u_{i+3}$, $u_iu_{i+2} u_{i+3}$, $0 \leq i \leq 6$. Then these 7 vertices (points) $u_0,u_1,\dots, u_6$, 21 edges (arcs) and 14 triangles (triangular regions) give the 7-vertex triangulation $\tau$ of the torus $\partial(\mathbb{D}^2 \times \mathbb{S}^1)$.

For each $i \in \{0,1, \dots, 6\}$, consider the closed curve $\ell_i :=u_iu_{i+1}u_{i + 2}u_{i}$ consisting of the three arcs $u_iu_{i+1}$ ($\subset C_1$), $u_{i+1}u_{i+2}$ ($\subset C_1$) and $u_{i+2}u_{i}$ ($\subset C_3$). The loop $\ell_i$ is a meridian in the solid torus $\mathbb{D}^2 \times \mathbb{S}^1$. Now choose seven meridional disks $\Delta_i$, $0\leq i\leq 6$, such that $\partial \Delta_i = \ell_i$ and $\Delta_i \cap \Delta_j = \ell_i \cap \ell_j $. Observe that the closure of the complement of $\cup \Delta_i$ is the union of seven balls $B_0, B_1,\dots,B_6$ such that $\partial B_i$ consists of four triangular disks $u_iu_{i+1}u_{i+3}$, $u_iu_{i+2}u_{i+3}$, $\Delta_i$ and $\Delta_{i+1}$. (First two triangular disks are on the boundary $\partial(\mathbb{D}^2 \times \mathbb{S}^1)$ and the last two disks are in the interior of the solid torus.) Let $T_1$ be the (smooth) triangulation of the solid torus whose 3-faces are $B_0, \dots, B_6$.

Similarly, we have two more triangulations which are described below in discrete representation. The smooth triangulation may be obtained by considering the curves $C_i$'s and their intersection points as described above.
\begin{subequations}
\begin{align}
T_1 & = \left\{ u_i u_{i+1}u_{i+2}u_{i+3} : i \in \mathbb{Z}_7 \right\}, \\
T_2 &= \left\{u_iu_{i+2}u_{i+3}u_{i+4} : i \in \mathbb{Z}_7   \right\}, \\
T_3 &= \left\{u_iu_{i+1}u_{i+4}u_{i+5} : i \in \mathbb{Z}_7   \right\}.
\end{align}
\end{subequations}
It is easy to see that $T_i$ gives a triangulation of $\mathbb{D}^2\times \mathbb{S}^1$  for each $i =1,2,3$. Also note that $T_1 \cap T_2 = T_1 \cap T_3 = T_2 \cap T_3 = \partial T_1 = \partial T_2 = \partial T_3 = \tau$.

For $1\leq i < j \leq 3$, let $S_{ij}$ denote the union $T_i \cup T_j$. For any vertex $u_k$, the link of $u_k $ in $S_{ij} $  is the union of the links of $u_k$ in $T_i$ and $T_j$. More formally, ${\rm lk}_{S_{ij}} (u_k) = {\rm lk}_{T_i} (u_k) \cup {\rm lk}_{T_j}(u_k)$. Since ${\rm lk}_{T_i} (u_k) \cap {\rm lk}_{T_j}(u_k) = {\rm lk}_{\tau}(u_k)$ is a cycle, it follows that ${\rm lk}_{S_{ij}} (u_k)$ is a 2-sphere. Thus, $S_{ij} $ is a closed triangulated $3$-manifold. Observe that $\alpha_1 = [u_0u_1u_6u_0]$, $\alpha_2=[ u_0 u_2u_5u_0]$ and $\alpha_3=[u_0u_3u_4u_0]$ are loops in $\tau$. Further, we have
\begin{align*}
\pi_1 (\tau, u_0) =  \langle \alpha_1, \alpha_2 \rangle = \langle \alpha_2, \alpha_3 \rangle = \langle \alpha_1, \alpha_3 \rangle.
\end{align*}
Moreover, the loop $\alpha_i $ is homotopically trivial in $T_i$ for each $i \in \{1, 2, 3\}$. Therefore, $|S_{ij}|$ (the geometric carrier of $S_{ij}$) is simply-connected for $1\leq i < j \leq 3$. Thus, $|S_{ij}|$ is homeomorphic to $\mathbb{S}^3$.

In fact, we can identify $|T_i| \cong \mathbb{D}^2 \times \mathbb{S}^1$ with the solid torus $A := \{(z_1, z_2)\in \mathbb{S}^3 \, : \, |z_1|^2\leq 1/2\}$ via the map $(z, w) \mapsto (z/\sqrt{2}, (1-|z|^2/2)^{1/2}w)$ and $|T_j| \cong  \mathbb{D}^2 \times \mathbb{S}^1$ with the solid torus $B := \{(z_1, z_2)\in \mathbb{S}^3 \, : \, |z_2|^2\leq 1/2\}$ via the map $(z, w) \mapsto ((1-|z|^2/2)^{1/2}w, z/\sqrt{2})$
to realise $|S_{ij}| =\mathbb{S}^3$.  With this notation, by a triangulation of $\mathbb{D}^2 \times \mathbb{S}^1$, we mean a triangulation of a solid 3-torus inside $\mathbb{S}^3$.
\end{example}

Consider the unknot $U= \{(0, 0, x_2, y_2) : x_2^2 +y_2^2=1\} \subset A \subset \mathbb{S}^3$. Notice that it is a positively transverse
unknot in the standard contact structure $\xi_{std}$. Let $\xi^U_{std}$ denote the contact structure obtained by Lutz twisting $\xi_{std}$ along $U$. We now prove

\begin{lemma} \label{d_3_-1}
There exists a $7$-vertex contact triangulation $S_0$ of $(\mathbb{S}^3, \xi_{std}^U)$.
\end{lemma}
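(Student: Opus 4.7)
I would construct $S_0$ by realising the 7-vertex topological triangulation $S_{13}=T_1\cup T_3$ of $\mathbb{S}^3$ from Example \ref{2-torus}, identifying $|T_1|$ with the solid torus $A=\{|z_1|^2\le 1/2\}$ (which contains $U$) and $|T_3|$ with the complementary solid torus $B$, and placing the seven vertices $u_0,\dots,u_6$ on $\tau=\partial A=\partial B$.

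For the $B$-side, the Lutz twist is localised in a tubular neighbourhood $N(U)\subset A$, so $\xi_{std}^U|_B = \xi_{std}|_B$ is tight; Lemma \ref{tight_triangulation} upgrades $T_3$ to a contact triangulation of $(B,\xi_{std}^U|_B)$ on these same seven vertices.

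For the $A$-side, I realise the tetrahedra $B_i = u_iu_{i+1}u_{i+2}u_{i+3}$ of $T_1$ and the interior $2$-faces $\Delta_i$ (meridional disks of $A$ bounded by $\ell_i$) so that each $\Delta_i$ meets the core $U$ transversally in a single point $p_i$, splitting $U$ into seven arcs $\gamma_i \subset \mathrm{int}(B_i)$. I shape each $B_i$ so that its cross-section at every level $\theta_2 = c$ is a \emph{proper sector} (not a full disk) of the meridional disk of $A$ at that level, with the sectors varying as $c$ changes according to the combinatorial pattern of $T_1$ dictated by the curves $C_1,C_2,C_3$ on $\partial A$. The seven tetrahedra so realised partition $A\setminus\bigcup_i\Delta_i$. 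The key point is that the overtwisted meridional disks of the Lutz twist on $N(U)$ — which are full disks around points of $U$ at fixed $\theta_2$-levels — cannot fit inside any single proper-sector cross-section of a $B_i$, so no standard OT disk of the Lutz twist embeds in $\mathrm{int}(B_i)$. Combined with $\xi_{std}^U$ agreeing with the tight $\xi_{std}$ on $A\setminus N(U)$, this gives condition (iii) for each $B_i$. Edges are Legendrianised by the standard $C^0$-approximation, giving (i), and the $\Delta_i$'s, being meridional disks of $A$ with Legendrian boundaries, are non-overtwisted $2$-faces, giving (ii).

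The main obstacle in this plan is to rule out \emph{exotic} overtwisted disks — disks obtained by isotopy from the standard OT meridional disks of $N(U)$ — sitting inside some $\mathrm{int}(B_i)$, since each $B_i$ does contain the Lutz-twisted arc $\gamma_i$ of $U$ along with a neighbourhood. I would address this via convex surface theory, treating $\Delta_i$ and $\Delta_{i+1}$ as convex boundary surfaces of $B_i$ and checking that the resulting dividing curves are consistent with a tight contact structure on the twisted pie slice $\mathrm{int}(B_i)$; once that is established, the interior of $B_i$ with $\xi_{std}^U$ is contactomorphic to an open set of $(\mathbb{R}^3,\xi_{st})$ and $S_0=T_1\cup T_3$ is the desired 7-vertex contact triangulation.
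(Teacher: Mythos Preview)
Your proposal has a genuine gap: you never exploit the freedom to \emph{choose} the Lutz twist, and without that the argument cannot close. You acknowledge that each tetrahedron $B_i$ of $T_1$ contains an arc $\gamma_i\subset U$ together with a neighbourhood of it. If the Lutz twist is carried out in a small tube $N(U)\cong \mathbb{D}^2_{\varepsilon}\times\mathbb{S}^1$ with $\varepsilon$ small, then that neighbourhood of $\gamma_i$ inside $B_i$ already contains the standard overtwisted meridional disks $\mathbb{D}^2_{\varepsilon}\times\{\theta\}$ for $\theta$ on $\gamma_i$. Your ``proper sector'' claim does not prevent this: while the cross-section $B_i\cap(\mathbb{D}^2\times\{c\})$ is indeed a proper subset of the full meridional disk of $A$, for $c$ in the interior of $\gamma_i$ it contains the centre point and hence an entire sub-disk $\mathbb{D}^2_r\times\{c\}$ for all small $r$. (This is precisely what Lemma~\ref{disk-lemma} quantifies: there is a threshold $\delta<1$ so that $\mathbb{D}^2_r\times\{c\}$ may sit inside a tetrahedron when $r\le\delta$, but never when $r>\delta$.) Consequently, for a generic Lutz twist the interior of $B_i$ is genuinely overtwisted, and no amount of convex-surface bookkeeping on $\Delta_i,\Delta_{i+1}$ will certify it as tight.

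The paper's route avoids this by reversing the logic. First, Lemma~\ref{disk-lemma} uses a simple compactness argument to produce $\delta<1$ such that no meridional disk of radius $r>\delta$ fits in any tetrahedron of $T_1$. Then Lemma~\ref{Lutz_twist_unknot} constructs the Lutz twist explicitly with the overtwisted radius $R$ chosen in $(\delta,1)$, arranging in particular that (ii) there is no smaller overtwisted disk inside $\mathbb{D}^2_R\times\{\theta\}$. With the overtwisted disks forced to be \emph{large}, they cannot sit inside any $B_i$, and a small Legendrian perturbation of the $2$-faces finishes the job. In short, the missing idea in your plan is exactly the pair Lemma~\ref{disk-lemma}/Lemma~\ref{Lutz_twist_unknot}: rather than trying to prove tightness of the $B_i$ after an arbitrary Lutz twist, one tailors the Lutz twist so that its overtwisted disks are too big for the tetrahedra. (Incidentally, the paper glues $T_1$ with $T_2$ rather than $T_3$, but that choice is inessential.)
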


We need the following two lemmas for the proof of Lemma \ref{d_3_-1}.

\begin{lemma}\label{disk-lemma}
Let $T_1$ be the triangulation of a solid torus $\mathbb{D}^2\times \mathbb{S}^1$ as in Example $\ref{2-torus}$.
Then there exists $\delta \in (0, 1)$ such that the meridional disk $D_r \times \{\theta\}$ is not contained in
any $3$-simplex of $T_1$ for any $\theta \in \mathbb{S}^1$ and $\delta < r <1$.
\end{lemma}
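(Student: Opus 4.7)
The plan is to argue by contradiction using compactness together with one topological fact about curves on the torus. Suppose the conclusion fails. Then there are sequences $r_n \to 1^-$, $\theta_n \in \mathbb{S}^1$, and $i_n \in \mathbb{Z}_7$ such that $D_{r_n} \times \{\theta_n\}$ is contained in the $3$-simplex $B_{i_n} = u_{i_n}u_{i_n+1}u_{i_n+2}u_{i_n+3}$ of $T_1$. Recall from Example \ref{2-torus} that $B_i$ is a closed $3$-ball whose boundary $2$-sphere is the union of the two triangles $u_iu_{i+1}u_{i+3}$ and $u_iu_{i+2}u_{i+3}$ lying on $\partial(\mathbb{D}^2 \times \mathbb{S}^1)$ together with the two meridional disks $\Delta_i$ and $\Delta_{i+1}$ in the interior of the solid torus. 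Since $\mathbb{Z}_7$ is finite and $\mathbb{S}^1$ is compact, I pass to a subsequence along which $i_n \equiv i$ and $\theta_n \to \theta_{\ast}$.

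As $n \to \infty$ the closed disks $D_{r_n} \times \{\theta_n\}$ converge in Hausdorff distance to the full meridional disk $D_1 \times \{\theta_{\ast}\}$. Since each of these disks lies in the closed set $B_i$, so does the limit, and in particular the boundary circle $\partial D_1 \times \{\theta_{\ast}\}$, which lies on $T^2 := \partial(\mathbb{D}^2 \times \mathbb{S}^1)$, is contained in $B_i \cap T^2$. Because the manifold interior of $B_i$ is disjoint from $T^2$, this intersection equals $\partial B_i \cap T^2$, which by the description of $\partial B_i$ is exactly the quadrilateral $Q_i := u_iu_{i+1}u_{i+3} \cup u_iu_{i+2}u_{i+3}$. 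The two triangles meet only along the common edge $u_iu_{i+3}$ of the simplicial complex $\tau$, so $Q_i$ is a topological $2$-disk inside the torus $T^2$.

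To finish, I invoke the topological fact that the meridian $\partial D_1 \times \{\theta_{\ast}\}$ represents a non-trivial element of $\pi_1(T^2)$ and is therefore non-contractible; no such simple closed curve can be contained in any embedded $2$-disk in $T^2$, which contradicts the inclusion $\partial D_1 \times \{\theta_{\ast}\} \subset Q_i$. The only place where care is required is the identification $B_i \cap T^2 = Q_i$, but this is immediate from the construction of $B_i$ in Example \ref{2-torus}; after that, everything reduces to a routine Hausdorff-limit argument combined with the standard fact that a meridian loop on a $2$-torus is homotopically essential.
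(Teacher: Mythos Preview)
Your compactness-and-limit argument is a nice alternative to the paper's proof, which instead defines an explicit function $f(\theta)=\sup\{r:D_r\times\{\theta\}\subset B_i\}$, observes from the geometry of the intersection $D_1\times\{\theta\}\cap B_i$ (Figure~4) that $f<1$ pointwise, and then takes a maximum over the compact circle. Your Hausdorff-limit reduction to the statement ``no full meridional disk $D_1\times\{\theta_\ast\}$ lies in any $B_i$'' is correct and is really the same compactness step in different clothing.

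However, your key identification $B_i\cap T^2=Q_i$ is wrong, and this is not a cosmetic slip. Recall that $\partial B_i=Q_i\cup\Delta_i\cup\Delta_{i+1}$. The interiors of $\Delta_i,\Delta_{i+1}$ lie in the interior of the solid torus, but their boundaries $\ell_i,\ell_{i+1}$ lie on $T^2$. Two of the three edges of $\ell_i$ (namely $u_iu_{i+1}$ and $u_iu_{i+2}$) are already in $\partial Q_i$, but the third edge $e:=u_{i+1}u_{i+2}\subset C_1$ is not, and likewise for $\ell_{i+1}$. Hence
\[
B_i\cap T^2 \;=\; Q_i\cup e,
\]
a disk with an external arc attached at two boundary vertices. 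This set is homotopy equivalent to a circle in $T^2$, and in fact it \emph{contains} the meridian loops $\ell_i$ and $\ell_{i+1}$. So ``a meridian cannot sit inside $B_i\cap T^2$ because the latter is a disk'' is false as stated.

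The argument is easily repaired. If the round circle $\partial D_1\times\{\theta_\ast\}$ were contained in $Q_i\cup e$ but not in $Q_i$, it would have to traverse the arc $e$ and hence pass through both of its endpoints $u_{i+1}$ and $u_{i+2}$. But the vertices $u_k=(e^{4\pi ik/7},e^{2\pi ik/7})$ have pairwise distinct $\mathbb{S}^1$-coordinates, so a slice $\partial D_1\times\{\theta_\ast\}$ meets at most one vertex. Thus the circle would lie in the disk $Q_i$, contradicting non-contractibility of the meridian. With this extra sentence your proof goes through; without it, the step you flagged as ``the only place where care is required'' does not.
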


\begin{proof}
 For a fixed $i \in \{0, 1, \dots, 6\} $, let $B_i$ be a 3-simplex in $T_1$.
Let $D_r(t)$ denote the meridional disk $\mathbb{D}_r \times \{ e^{2\pi it}\}$.
Let $\Omega_t = D_1(t) \cap B_i $.
Then $\Omega_t$ is a single point or is given by one of the diagram shown in Figure 4.

\begin{figure}[!ht]
 \centering
 \includegraphics[scale=.5]{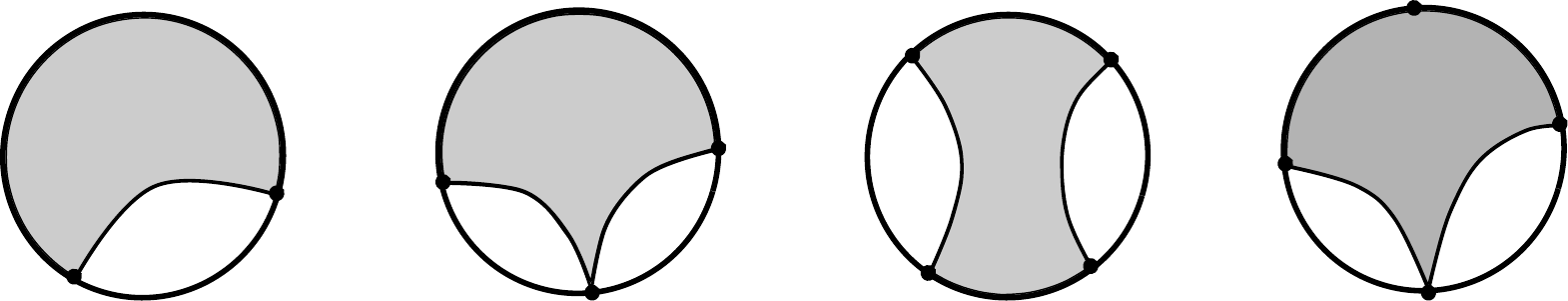}
 \caption*{Figure 4: Possible intersections of meridional disk with a tetrahedron} 
\end{figure}

Observe that $\partial \Omega_t$ intersects the interior of $D_1(t)$ nontrivially when $\Omega_t$ is not a point.
Let $A_i = (\{0 \} \times \mathbb{S}^1) \cap B_i$. Then $A_i$ is a closed
arc with length ${1}/{7} $ of $\{0\}\times \mathbb{S}^1$.

Now consider the function $f : \{0\}\times\mathbb{S}^1 \rightarrow  [0,1]$ given by
$f(e^{2 \pi it})= \sup\{r \, : \, D_r(t)\} \subset \Omega_t$. Then, by earlier observation,
$f(a) < 1$ for all $a \in \{0\} \times \mathbb{S}^1$. Since $f$ is continuous and $\{0\}\times \mathbb{S}^1$
is compact, $f$ has a maximum value, say, $\delta_i<1$. Clearly, $\delta_i = \sup\{f(a) : a \in \{0\}\times \mathbb{S}^1\} = f(p_i)$
for some $p_i\in A_i$.

Let $\delta = max \{\delta_1, \delta_2,\dots, \delta_6 \}$. Then $\delta < 1$ and for any $r \in (\delta, 1]$,
we have $D_r(t) \nsubseteq \Omega_t$.
\end{proof}

Let $(r, \phi)$ denote the polar coordinates on the disk $\mathbb{D}^2_R$ of radius $R>0$ and
$\theta $ denote coordinate along $\mathbb{S}^1$-direction in $\mathbb{D}^2 \times \mathbb{S}^1$.

\begin{lemma}\label{Lutz_twist_unknot}
Let $0< R<1$. Then there exists a contact structure $\xi^U$ on the solid torus $\mathbb{D}^2_1 \times \mathbb{S}^1$
obtained by Lutz twisting the standard contact structure $\text{ker}(d\theta +r^2 d \phi)$ along the core of solid torus
such that the following hold.
\begin{enumerate}[{\rm (i)}]
\item
$\xi^U$ is overtwisted and $\mathbb{D}^2_R \times \{ \theta \} \subset \mathbb{D}^2_1 \times \{ \theta\}$ is an overtwisted disk for each $\theta \in \mathbb{S}^1 $.
\item
There is no smaller overtwisted disk within $\mathbb{D}^2_R \times \{ \theta \}$.
\item
Near the boundary of the torus $\mathbb{D}^2_1 \times \mathbb{S}^1$ the contact structure agrees with $\text{ker}(d\theta - r^2 d\phi)$.
 \end{enumerate}
\end{lemma}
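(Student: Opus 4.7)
The plan is to construct the contact form $\alpha^U = h_1(r)\, d\theta + h_2(r)\, d\phi$ directly by prescribing smooth functions $h_1, h_2 \colon [0,1] \to \mathbb{R}$ with appropriate behavior, and then to verify each of the three claimed properties. I would choose $h_1, h_2$ so that: $h_1 \equiv -1$ and $h_2 \equiv 0$ near $r = 0$; $h_1 \equiv 1$ and $h_2(r) \equiv -r^2$ near $r = 1$; $h_2(R) = 0$ with $h_1(R) > 0$, and $h_2 > 0$ on $(0, R)$ while $h_2 < 0$ on $(R, 1)$; and finally the planar curve $r \mapsto (h_1(r), h_2(r))$ rotates monotonically clockwise about the origin, i.e.\ $h_1 h_2' - h_2 h_1' < 0$ on $(0, 1)$. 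Such a pair exists by direct interpolation: parametrize a smooth curve in $\mathbb{R}^2 \setminus \{0\}$ starting at $(-1, 0)$, sweeping clockwise through the upper half-plane to cross the positive horizontal axis exactly once at parameter value $r = R$, and continuing clockwise through the lower half-plane to end at $(1, -1)$. The monotone rotation yields the contact condition $\alpha^U \wedge d\alpha^U = (h_1 h_2' - h_2 h_1')\, d\theta \wedge dr \wedge d\phi \neq 0$, and the prescribed values near $r = 1$ immediately give (iii). That $\xi^U := \ker \alpha^U$ is truly obtained by Lutz twisting $\ker(d\theta + r^2 d\phi)$ along the core follows because near $r = 0$, $\alpha^U = -d\theta$ has the same kernel as $d\theta + r^2 d\phi \approx d\theta$ but opposite coorientation, and the interpolation is of the shape prescribed in Section \ref{lt}.

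For (i), I would compute the characteristic foliation on $D := \mathbb{D}^2_R \times \{\theta\}$. At $p = (r, \phi, \theta) \in D$ the tangent plane is spanned by $\partial/\partial r$ and $\partial/\partial \phi$, and $d\theta$ vanishes on $T_p D$; hence the pullback of $\alpha^U$ to $D$ equals $h_2(r)\, d\phi$. This one-form vanishes only at $r = 0$ (a singular point where $T_p D \subset \xi^U_p$, since $\alpha^U = -d\theta$ there) and along the circle $\{r = R\}$ (where $\partial/\partial\phi$ is tangent to $\xi^U$). Elsewhere on $D$ the characteristic foliation is spanned by $\partial/\partial r$, so its leaves are radial arcs emanating from the origin and limiting onto $\partial D$, which is itself a closed Legendrian leaf. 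This is exactly the overtwisted disk foliation depicted in Figure \ref{otdisk}.

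For (ii), the same computation characterizes all closed Legendrian curves inside $D$: such a curve must be tangent to the characteristic foliation throughout, but away from the origin every leaf is an open radial arc that cannot support a closed curve. The only closed leaf is $\partial D$ itself, so any sub-disk $D' \subset D$ with Legendrian boundary must satisfy $\partial D' = \partial D$, forcing $D' = D$; hence no strictly smaller overtwisted disk is contained in $D$.

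The main technical obstacle is the first step: producing the pair $(h_1, h_2)$ that simultaneously realizes the flat boundary data at $r = 0$ and $r = 1$, places the unique interior zero of $h_2$ at the prescribed radius $R$, and keeps the planar curve strictly monotone in polar angle. This is a one-dimensional interpolation problem that can be handled by first prescribing a strictly decreasing polar angle $\Theta(r)$ joining $\pi$ to $-\pi/4$ with the correct flat behavior at the endpoints, and then choosing a smooth positive radial amplitude $\rho(r)$ so that $(h_1, h_2) = (\rho \cos \Theta, \rho \sin \Theta)$ matches the specified values $(-1,0)$ and $(1, -1)$. Once the pair is fixed, the remaining verifications reduce to direct inspection on a single meridional slice.
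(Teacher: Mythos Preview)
Your approach is essentially the paper's own: write $\alpha^U = h_1(r)\,d\theta + h_2(r)\,d\phi$ and engineer the planar curve $(h_1,h_2)$ so that it winds once (clockwise) from $(-1,0)$ through the upper half-plane, crosses the positive $h_1$-axis at $r=R$, and lands at $(1,-r^2)$ near $r=1$. The paper is terser --- it simply writes down $h_1(r)=-\cos(\pi r/R)$, $h_2(r)=r^2\sin(\pi r/R)$ on $[0,R]$ and appeals to a figure for the extension --- whereas you argue existence abstractly via a polar-angle interpolation and then verify (i) and (ii) by computing the characteristic foliation on the slice. That extra verification is welcome and is correct in outline.

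There is, however, an internal inconsistency in your boundary data at $r=0$. You require $h_1\equiv -1$ and $h_2\equiv 0$ on a neighborhood of $0$. On that neighborhood $\alpha^U=-d\theta$ is closed, so $\alpha^U\wedge d\alpha^U=0$ and $\alpha^U$ is not a contact form there; equivalently $h_1h_2'-h_2h_1'=0$ on $(0,\varepsilon)$, contradicting your own demand that it be strictly negative on all of $(0,1)$. The same choice breaks your verification of (i): since the pulled-back form on $D$ is $h_2(r)\,d\phi$, having $h_2\equiv 0$ on $[0,\varepsilon]$ makes every point of $\mathbb{D}^2_\varepsilon\times\{\theta\}$ a singularity of the characteristic foliation, so $D$ no longer carries the overtwisted foliation with a \emph{single} elliptic point at the centre.

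The repair is minor: drop ``$h_2\equiv 0$ near $0$'' and ask only that $h_2(0)=0$ with $h_2>0$ on $(0,R)$ and $h_2(r)/r^2$ smooth at $0$ (so that $\alpha^U$ extends smoothly across the core). The paper's explicit $h_2(r)=r^2\sin(\pi r/R)$ is one such choice. With that adjustment your monotone-angle construction and your foliation argument for (i) and (ii) go through as written.
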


\begin{figure}[ht!]
 \centering{
 \resizebox{60mm}{!}{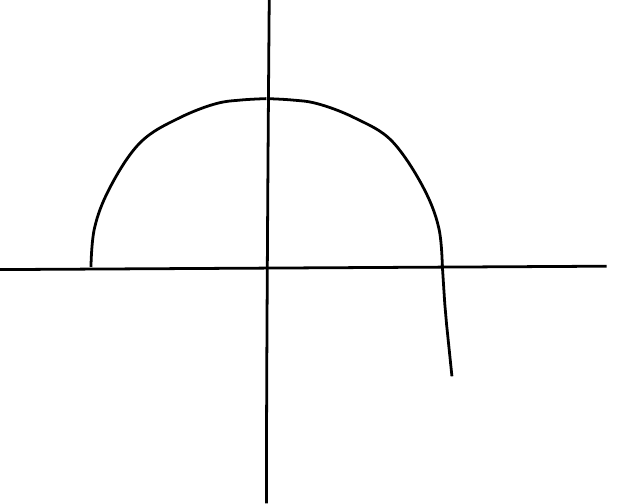}
 \caption*{Figure 5: The Choice of $(h_1(r), h_2(r)) $ for Lutz twist}\label{h1h2}
}
\end{figure}

\begin{proof}
We consider an 1-form $\alpha = h_1(r) d \theta + h_2 (r) d \phi$. It is easy to show that $\alpha$
defines a contact form if and only if the vectors $(h_1(r),h_2(r))$ and $(h'_1(r), h'_2(r))$ are linearly independent.
To achieve conditions (i) and (ii), we take $h_1(r)=-cos(\frac{\pi}{R} r)$ and $h_2(r) = r^2 sin(\frac{\pi}{R} r) $ on the interval $[0, R]$
and further extend them so that $h_1(r) = 1 $ and $h_2(r)= -r^2 $ near $r=1$. This is shown in Figure 5,
where $(h_1(r),h_2(r))$ is a parametrized curve such that the tangent $(h'_1(r), h'_2(r))$ is never parallel to the position vector at any point
on the curve.
\end{proof}

\begin{proof}[Proof of Lemma \ref{d_3_-1}]
By Lemma \ref{disk-lemma}, there is $ 0<\delta < 1$ such that no meridional disk of radius $r > \delta$ is
contained in any tetrahedron of the triangulation $T_1$.
Choose $R$ and $\epsilon > 0$ so that $\delta < R < R + \epsilon <1$.
As discussed in the Example \ref{2-torus}, we take a triangulation of $\mathbb{S}^3$ by
taking union $T_1 \cup T_2$ of triangulated solid tori. On $T_1$ we perform a Lutz twist along the core
of $T_1$ (this is unknot $U$ in $\mathbb{S}^3 $) by using Lemma \ref{Lutz_twist_unknot} with $R > \delta $.
Let $(r_i, \phi_i, \theta_i) $ denote coordinates on $T_i $ for $i=1,2 $. Then
near the boundary of the $ T_1$ the new contact structure $\xi^U $ agrees with $\text{ker} (d\theta_1 - r^2 d\phi_1)$
as given by Lemma \ref{Lutz_twist_unknot}.
The contact structure $\xi_{std} $ near the boundary of $T_2$ agrees with $\text{ker}(d\theta_2 +r^2_2 d \phi_2)$.
Use an orientation reversing diffeomorphism of $\psi: \partial T_1 \rightarrow \partial T_2$
with $\psi(\theta_1) = \phi_2$ and $\psi(\phi_1)= \theta_2$.
Note that $\psi $ preserves the contact structures near the boundary. Hence we obtain a
contact structure on $\mathbb{S}^3$. This new contact structure is obtained by Lutz twisting
$\xi_{std}$ along $U$. We denote it $\xi_{std}^U $.
We see that all overtwisted disks of $\xi_{std}^U$ lie in $T_1$ and have the radius $R$. By Lemma
\ref{Lutz_twist_unknot}, no overtwisted disk is contained in any tetrahedron.
No $2 $-face of $T_1 $ is an overtwisted disk by construction. Now, by applying a
small perturbation to all 2-faces in $T_1 \cup T_2 $, we make the edges Legendrian.
The $2$-face which are in the interior of $T_1 $ are perturbed in such a way that
they remain transverse to overtwisted disks. In particular, the new interior $2 $-faces
for $T_1$ do not coincide with any overtwisted disk. The new perturbed triangulation has tetrahedra
that slightly perturbed from their original position.
This ensures that no overtwisted disk is contained in the new tetrahedra. Let the final (after perturbations)
triangulations we get from $T_1$ and $T_2$ be $T^U_1$ and $T^U_2$ respectively.
Then $S_0 = T^U_1\cup T^U_2$ serves our purpose.
\end{proof}

For $0< \varepsilon <1/2$, let $K_{\epsilon} := \{(({{1}/{2} - \varepsilon})^{1/2}\cos 3\theta, ({{1}/{2} - \varepsilon})^{1/2}\sin 3\theta, ({{1}/{2} + \varepsilon})^{1/2} \cos 2\theta,$ $({{1}/{2} + \varepsilon})^{1/2}\sin 2\theta) \, : \, 0\leq \theta\leq 2\pi\}$. Then $K_{\varepsilon}$ lies on the torus $\{(z_1, z_2) \in \mathbb{S}^3 \, : |z_1|^2 = 1/2- \varepsilon, |z_2|^2 = 1/2 +\varepsilon\}$ and represents a trefoil knot. Moreover, $K_{\varepsilon}$
is a positively transverse knot in $(\mathbb{S}^3, \xi_{std})$. Let $\xi_{std}^{K_{\varepsilon}}$ denote the contact structure obtained by Lutz twisting the standard contact structure $\xi_{std}$ along $K_{\varepsilon}$. Here we prove

\begin{lemma} \label{d_3_+1}
There exists a $7$-vertex contact triangulation $S_{\varepsilon}$ of $(\mathbb{S}^3, \xi_{std}^{K_{\varepsilon}})$ for $0<\varepsilon<1/64$.
\end{lemma}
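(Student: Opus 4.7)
The plan is to mirror the proof of Lemma \ref{d_3_-1} almost verbatim, replacing the positively transverse unknot $U$ by the positively transverse trefoil $K_\varepsilon$ and the ``Lutz-twist solid torus'' by a Weinstein tubular neighborhood of $K_\varepsilon$. I would start with the $7$-vertex triangulation $\mathbb{S}^3 = |T_1 \cup T_2|$ from Example \ref{2-torus}, identifying $|T_1|$ with $A = \{(z_1, z_2) \in \mathbb{S}^3 : |z_1|^2 \leq 1/2\}$ as there. Under this identification, $K_\varepsilon$ becomes the curve $(\sqrt{1-2\varepsilon}\,e^{3i\theta}, e^{2i\theta})$ in $|T_1|^\circ$, a $(3,2)$-torus knot sitting in the interior of $|T_1|$ for every $\varepsilon > 0$.

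By the standard neighborhood theorem for positively transverse knots, there is a tubular neighborhood $N := N(K_\varepsilon) \cong \mathbb{D}^2_1 \times \mathbb{S}^1 \subset |T_1|^\circ$ on which $\xi_{std}$ takes the normal form $\ker(d\theta + r^2 d\phi)$; the explicit bound $\varepsilon < 1/64$ is what is required to fit the full radius-$1$ model inside $|T_1|^\circ$ without self-intersection. One then applies Lemma \ref{Lutz_twist_unknot} on $N$ to obtain a new contact structure $\xi_{std}^{K_\varepsilon}$ on $\mathbb{S}^3$ that coincides with $\xi_{std}$ outside $N$ (hence is tight there) and whose only overtwisted disks are the meridional disks $\mathbb{D}^2_R \times \{\theta\}$ of $N$, for some $R \in (0,1)$ to be fixed below.

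The crux is to establish the trefoil analog of Lemma \ref{disk-lemma}: there exists $\delta \in (0,1)$ such that, for every $\theta \in \mathbb{S}^1$ and every $r \in (\delta, 1)$, the meridional disk $\mathbb{D}^2_r \times \{\theta\}$ of $N$ is not contained in any $3$-simplex of $T_1$. The proof I have in mind follows Lemma \ref{disk-lemma} verbatim: for each $3$-simplex $B$ of $T_1$, the intersection $A_B := K_\varepsilon \cap B$ is a finite disjoint union of compact arcs, and the function $f_B : A_B \to [0,1]$ defined by $f_B(p) := \sup\{r : \mathbb{D}^2_r \times \{\theta(p)\} \subset B\}$ is continuous; compactness then gives $\delta_B := \max_{A_B} f_B < 1$, since a radius-$1$ disk meets $\partial N \subset |T_1|^\circ$ and so cannot lie inside a single tetrahedron. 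Set $\delta := \max_B \delta_B$ and fix the Lutz-twist radius $R \in (\delta, 1)$.

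Finally, as in the closing paragraph of Lemma \ref{d_3_-1}, perturb the $2$-faces of $T_1 \cup T_2$ slightly to make every edge Legendrian, keeping the interior $2$-faces transverse to the foliation of $N$ by meridional disks. This simultaneously secures the three conditions of a contact triangulation: (i) all edges are Legendrian by construction, (ii) no $2$-face coincides with an overtwisted disk (by transversality), and (iii) no $3$-simplex contains an overtwisted disk (by tightness outside $N$ together with the disk-lemma analog inside). The resulting complex $S_\varepsilon = T_1^{K_\varepsilon} \cup T_2^{K_\varepsilon}$ has $7$ vertices and contact-triangulates $(\mathbb{S}^3, \xi_{std}^{K_\varepsilon})$. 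The principal obstacle is verifying that the radius-$1$ Weinstein tube around $K_\varepsilon$ genuinely embeds in $|T_1|^\circ$; this needs a quantitative estimate comparing the distance from $K_\varepsilon$ to the Clifford torus with the self-twist of $K_\varepsilon$ relative to the standard contact form, and it is precisely this estimate that forces the explicit threshold $\varepsilon < 1/64$.
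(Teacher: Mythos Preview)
Your approach has a genuine gap, and it stems from misreading the geometry of $K_\varepsilon$ relative to the Heegaard torus.  Under the identification of $|T_1|$ with $A$ the knot $K_\varepsilon$ sits at $|z|=\sqrt{1-2\varepsilon}$, i.e.\ \emph{very close to the boundary} $\partial T_1$ when $\varepsilon$ is small.  Consequently any tubular neighbourhood of $K_\varepsilon$ contained in $|T_1|^\circ$ must be extremely thin (of geometric radius $\lesssim\varepsilon$), and its meridional disks are tiny disks in $\mathbb{S}^3$.  Nothing prevents such a small disk from lying inside a single $3$-simplex of $T_1$; your disk-lemma analogue breaks down precisely here.  The assertion ``a radius-$1$ disk meets $\partial N\subset |T_1|^\circ$ and so cannot lie inside a single tetrahedron'' is a non sequitur: the torus $\partial N$ has no special position with respect to the triangulation (unlike $\partial T_1$, which \emph{is} in the $2$-skeleton and drove the original Lemma~\ref{disk-lemma}).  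So the compactness argument yields some $\delta_B$, but there is no reason for $\delta_B<1$.

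The paper's proof exploits exactly the feature you tried to avoid.  One takes the Lutz tube $N_\varepsilon$ to be the metric $\sqrt{\varepsilon}$-neighbourhood of $K_\varepsilon$; since the distance from $K_\varepsilon$ to the Clifford torus $\partial A=\partial B$ is of order $\varepsilon\ll\sqrt{\varepsilon}$, the tube $N_\varepsilon$ straddles $\partial A$ and every meridional disk of $N_\varepsilon$ meets both $A$ and $B$.  Hence no overtwisted disk can be contained in any tetrahedron of $T_1\cup T_2$, with no disk-lemma needed.  The bound $\varepsilon<1/64$ is there to keep $\sqrt{\varepsilon}$ small relative to the tetrahedra so that the subsequent Legendrian perturbation of the $2$-faces does not accidentally create an overtwisted $2$-face or $3$-face; it is \emph{not} used to fit the tube inside $|T_1|^\circ$.
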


\begin{proof}
Let $A=\{(z_1, z_2) \in \mathbb{S}^3 : |z_1|^2 \leq 1/2\}$ and $B = \{(z_1, z_2) \in \mathbb{S}^3 : |z_2|^2 \leq 1/2\}$ be the solid tori in $\mathbb{S}^3$ as in Example \ref{2-torus}. Observe that $K_{\varepsilon}$ is inside $A$. We take $7$-vertex contact triangulations $T_1$ for $A$ and $T_2$ for $B$ as before.

Now, we perform a Lutz twist in a small tubular neighborhood $N_{\varepsilon} := \{x\in \mathbb{S}^3 : d(x, K_{\varepsilon})<\sqrt{\varepsilon}\}\equiv \mathbb{D}_{\sqrt{\varepsilon}} \times \mathbb{S}^1$ of $K_{\varepsilon}$. (Here $d$ is the standard metric on $\mathbb{S}^3$.) Observe that both the tori $A$ and $B$ intersect nontrivially with $N_{\varepsilon}$ and with the meridional disk $f(\mathbb{D}_{\sqrt{\varepsilon}} \times \{\theta \})$ for every diffeomorphism $f : \mathbb{D}_{\sqrt{\varepsilon}} \times \mathbb{S}^1 \to N_{\varepsilon}$ and $\theta \in \mathbb{S}^1$. Thus, no overtwisted disk of $\xi_{std}^{K_{\varepsilon}}$ is contained in any tetrahedron of $T_1\cup T_2$.

Now, we apply small perturbations to the $2$-faces in $T_1 \cup T_2$ so that the edges become Legendrian. Since $\varepsilon$ is small (compared to 1), we perturb the 2-faces so that no 2-face is an overtwisted disk and no tetrahedron contains an overtwisted disk. Let the final (after perturbations) triangulations we get from $T_1$ and $T_2$ be denoted by $T^{\varepsilon}_1$ and $T^{\varepsilon}_2$ respectively.
Then $S_{\varepsilon} = T^{\varepsilon}_1\cup T^{\varepsilon}_2$ serves our purpose.
\end{proof}

From Example \ref{eg:S2xS1v10}, we know that vertex minimal contact triangulation exists for $\mathbb{S}^2 \times  \mathbb{S}^1$ with standard tight contact structure.
We now show that $S^{2,1}_{10}$ given by \eqref{eq:S21} can be turned into a contact triangulation of $\mathbb{S}^2 \times  \mathbb{S}^1$ with an overtwisted contact structure.

\begin{example}
Let $S^{2,1}_{10}$ be as in \eqref{eq:S21}. Consider the following subcomplexes of $S^{2,1}_{10}$.
\begin{align}
 T_4 &= \left \{v_{2i}v_{2i+1}v_{2i+2}v_{2i+4}, v_{2i+1}v_{2i+2}v_{2i+4}v_{2i+5}, v_{2i}v_{2i+2}v_{2i+3}v_{2i+4} :
 i \in \mathbb{Z}_{10} \right \}, \nonumber \\
T_5 &= \left \{v_{2i+1}v_{2i+2}v_{2i+3}v_{2i+5}, v_{2i}v_{2i+1}v_{2i+3}v_{2i4}, v_{2i+1}v_{2i+3}v_{2i+4}v_{2i+5} :
i \in \mathbb{Z}_{10}    \right \}.
\end{align}
Notice that $S = T_4 \cup T_5$ and $T_4 \cap T_5 = \partial T_4 = \partial T_5 = \tau_1$, where $\tau_1$ is as in Figure 6.

\setlength{\unitlength}{3mm}
\begin{picture}(30,15)(10,3)

\thicklines

\put(22,16){\line(1,0){15}} \put(22,8){\line(1,0){15}}

\put(22,8){\line(0,1){8}} \put(37,8){\line(0,1){8}}

\thinlines

\put(22,12){\line(21,0){15}}

\put(25,8){\line(0,1){8}} \put(28,8){\line(0,1){8}}
\put(31,8){\line(0,1){8}} \put(34,8){\line(0,1){8}}

\put(25,8){\line(-3,4){3}}
\put(28,8){\line(-3,4){6}} \put(31,8){\line(-3,4){6}}
\put(34,8){\line(-3,4){6}} \put(37,8){\line(-3,4){6}}
\put(37,12){\line(-3,4){3}}

\put(22,6.8){\mbox{$v_0$}} \put(25,6.8){\mbox{$v_4$}}
 \put(28,6.8){\mbox{$v_8$}} \put(31,6.8){\mbox{$v_2$}}
 \put(34,6.8){\mbox{$v_6$}} \put(37,6.8){\mbox{$v_0$}}

\put(22.2,12.3){\mbox{$v_1$}} \put(25.2,12.3){\mbox{$v_5$}}
 \put(28.2,12.3){\mbox{$v_9$}} \put(31.2,12.3){\mbox{$v_3$}}
 \put(34.2,12.3){\mbox{$v_7$}} \put(37.2,12.3){\mbox{$v_1$}}

 \put(22,16.3){\mbox{$v_2$}} \put(25,16.3){\mbox{$v_6$}}
 \put(28,16.3){\mbox{$v_0$}} \put(31,16.3){\mbox{$v_4$}}
 \put(34,16.3){\mbox{$v_8$}} \put(37,16.3){\mbox{$v_2$}}

\put(20,4){\mbox{Figure\,6\,: {\bf 10-vertex torus \boldmath{$\mathcal{T}_1$} }}}

\end{picture}

The automorphism group of the simplicial complex $S^{2,1}_{10}$ is generated by $\alpha$, $\beta$ and $\gamma$, where $\alpha(v_i)= v_{i+2}$, $\beta (v_i)=v_{-i}$ and $\gamma(v_i)=v_{i+5}$, $i\in \mathbb{Z}_{10}$. Then $\gamma (T_4) =T_5$ and $\gamma (T_5) = T_4$. Consider the triangulated 3-balls
\begin{align}
B_1 &= \left \{v_0v_1v_2v_4, v_1v_2v_4v_5, v_2v_4v_5v_6 \right\}, \nonumber \\
B_2 &= \left \{v_2v_3v_4v_6, v_3v_4v_6v_7, v_4v_6v_7v_8 \right\}, \nonumber \\
B_3 &= \left \{v_5v_6v_8v_9, v_6v_8v_9v_0, v_6v_7v_8v_0, v_7v_8v_0v_1 \right\}, \nonumber  \\
B_4 &= \left \{v_8v_0v_1v_2, v_8v_9v_0v_2, v_9v_0v_2v_3, v_0v_2v_3v_4  \right\}.
\end{align}
Then $B_1 \cap B_2 = \{v_2v_4v_6, v_4v_5v_6 \}$ and $B_3 \cap B_4 = \{v_8v_0v_1, v_8v_9v_0\}$. Let $B_{12} = B_1 \cup B_2 $ and $B_{34} = B_3 \cup B_4$. Observe that $B_{12}$ and $B_{34}$ are triangulations of the 3-ball. Also, $B_{12} \cap B_{34} = \{v_5v_6v_8, v_6v_7v_8\} \cup \{v_0v_1v_2, v_0v_2v_4, v_2v_3v_4\}$ is a triangulation of the disjoint union of two 2-disks. Since $T_4 = B_{12} \cup B_{34}$ and $\partial T_4 = \tau_1$ is the torus, it follows that $|T_4|$ is $\mathbb{D}^2 \times \mathbb{S}^1$. As $\gamma(T_4) =T_5$, it follows that $T_5$ also triangulates $\mathbb{D}^2 \times \mathbb{S}^1$. Therefore, $S^{2,1}_{10}$ is the union of two triangulated solid tori and it triangulates $\mathbb{S}^2 \times \mathbb{S}^1$. Observe that the edges $v_0v_2, v_2v_4, v_4v_6, v_6v_8, v_8v_0$ are in the interior of $T_4$ and the edges $v_1v_3, v_3v_5,  v_5v_7, v_7v_9, v_1v_9$ are in the interior of $T_5$.

Assume, without loss, that $|T_4|= D^2_+\times\mathbb{S}^1 = \{((x_1, x_2, x_3), z) \, : \, (x_1, x_2, x_3)\in \mathbb{S}^2, x_3\geq 0, z\in\mathbb{S}^1\}$ and $|T_5|= D^2_{-}\times\mathbb{S}^1 = \{((x_1, x_2, x_3), z) \, : \, (x_1, x_2, x_3)\in \mathbb{S}^2, x_3\leq 0, z\in\mathbb{S}^1\}$. Let $\xi_{std}$ be the standard contact structure on $\mathbb{S}^2\times \mathbb{S}^1$ defined in Example \ref{eg:S2xS1v10}.

For $0< \varepsilon <1/16$, let $L_{\epsilon} := \{(((1 - \varepsilon)^{1/2}\cos 3\theta, (1 - \varepsilon)^{1/2}\sin 3\theta, \varepsilon^{1/2}), ((1 + \varepsilon)^{1/2} \cos 2\theta,$ $(1 + \varepsilon)^{1/2}\sin 2\theta) \, : \, 0\leq \theta\leq 2\pi\}$. Then $L_{\varepsilon}$ represents a trefoil knot and lies inside $D^2_+\times\mathbb{S}^1$.

As in the proof of Lemma \ref{d_3_+1}, we perform a Lutz twist in the small tubular neighborhood $W_{\varepsilon} := \{(x,z)\in \mathbb{S}^2\times\mathbb{S}^1 : d((x,z), L_{\varepsilon})<2\sqrt{\varepsilon}\}\equiv \mathbb{D}_{2\sqrt{\varepsilon}} \times \mathbb{S}^1$ of $L_{\varepsilon}$. (Here $d((x,z), (y,w))= (d_2(x,y)^2+ d_1(z, w)^2)^{1/2}$, where $d_n$ is the Euclidean metric on $\mathbb{S}^n$.)
Then both the tori $|T_4|$ and $|T_5|$ intersect nontrivially with $W_{\varepsilon}$.
Let $\xi^{L_{\varepsilon}}$ be the contact structure on $\mathbb{S}^2 \times \mathbb{S}^1$  obtained by this Lutz twisting.

By the similar argument as in the proof of Lemma \ref{d_3_+1}, using small perturbations of the 2-faces of $S^{2,1}_{10}$, we get a 10-vertex contact triangulation $S^{2,1}_{\varepsilon}$ of $(\mathbb{S}^2 \times \mathbb{S}^1, \xi^{L_{\varepsilon}})$.
\end{example}

\section{Proofs}

First consider the following definition (\cite[Definition 1.3]{BD08}).

\begin{definition} \label{def:CSum}
Let $X_1$ and $X_2$ be triangulations of two closed $d$-manifolds. Assume, without loss, that they have no common vertices. For $1\leq i\leq 2$, let $\sigma_i\in X_i$ be a $d$-simplex. Let $\psi : \sigma_1 \to \sigma_2$ be a
bijection.
Let $(X_1\sqcup X_2)^{\psi}$ denote the simplicial complex obtained from $X_1\sqcup X_2 \setminus \{\sigma_1, \sigma_2\}$ by identifying $x$ with $\psi(x)$ for each $x\in \sigma_1$.
Then, $(X_1\sqcup X_2)^{\psi}$ is a triangulation of the connected sum of $|X_1|$ and $|X_2|$ (taken with appropriate orientations) and called an {\em
elementary connected sum} of $X_1$  and $X_2$, and is denoted by
$X_1 \#_{\psi} X_2$ (or simply by $X_1\# X_2$).
\end{definition}

\begin{proof}[Proof of Theorem \ref{mainthm-1}]
Observe that that the connected sum $(\mathbb{S}^3, \xi)\# (\mathbb{S}^3, \xi_{std})$ is contactomorphic to  $(\mathbb{S}^3, \xi)$. The contact structure $\xi_+$ is obtained by performing a Lutz twist along the right handed trefoil $K$. In particular, we think of $K \subset A \subset (\mathbb{S}^3, \xi_{std})$ as discussed in the proof of Lemma \ref{d_3_+1}. Recall that $T_1$ triangulates $A$. Now, consider the 7-vertex triangulation $S_{12}=T_1 \cup T_2$ of $\mathbb{S}^3$ as given in Example \ref{2-torus}.
Assume, without loss, that $S_{12}$ and $\Sigma$ have no common vertex.
Let $\sigma \in T_2$ and $\mu \in \Sigma$ be two 3-simplices and let $\psi : \mu\to \sigma$ be a bijection. Let $\Sigma\# S_{12}= \Sigma\#_{\psi}  T_2$ be the elementary connected sum. Since the dimension is 3, $\Sigma\# S_{12}$ triangulates the connected sum $|\Sigma|\# |S_{12}|$.
We may assume that $\Sigma \# S_{12}$ is smooth by applying small enough perturbation wherever it is necessary. Now observe that the knot $K \subset A$ is disjoint from $\sigma \in T_2 \subset S_{12}$. We perform a Lutz twist on the contact structure $\xi$ along $K$ as shown in the proof of Lemma \ref{d_3_+1}. We obtain the new contact structure $\xi_+$. Notice that $\xi_+$ restricted to $|\Sigma \setminus \mu|$ ($=|\Sigma| \setminus \mu^{\circ}$) agrees with $\xi$ as the Lutz twist along $K$ has changed the contact structure only in the neighborhood $N \subset A$ of $K$. This implies that edges in $\Sigma \setminus \mu$ are Legendrian with respect to $\xi_+$. No 2-faces are overtwisted disks and $\xi_+$ restricted to interior of each tetrahedron in $\Sigma \setminus \mu$ is tight. We may need to perturb the 2-faces of $\Sigma \# S_{12}$ to make {\em all} the edges in $\Sigma \# S_{12}$ Legendrian. Note that no overtwisted disk lies in any tetrahedron from $T_1$ and $T_2 \setminus \sigma$ as discussed in the proof of Lemma \ref{d_3_+1}. Therefore, $\xi_+$ restricted to the interiors of {\em all} the tetrahedra is tight and no 2-face is an overtwisted disk. Hence, $\Sigma \# S_{12}$  is a contact triangulation for $(\mathbb{S}^3, \xi_+)$. The number of vertices in the new triangulation is $f_0(\Sigma)+7-4 = f_0(\Sigma)+3$.

The argument to obtain a contact triangulation for $(\mathbb{S}^3, \xi_-)$ is similar. The role of the right handed trefoil is played by the unknot $U$ in the above argument.
\end{proof}

\begin{proof}[Proof of Corollary \ref{cor-1}]
We know that overtwisted contact structures on $\mathbb{S}^3$ are classified completely by their $d^3$ invariant following Eliashberg \cite{El89}. Assume that $d^3(\xi_{ot}, \xi_{std}) = n >0$.

If $n=1$ then the result follows from Lemma \ref{d_3_+1}. Let $S_{\varepsilon}$, $T^{\varepsilon}_1$, $T^{\varepsilon}_2$ be as in the proof of Lemma \ref{d_3_+1}.

Now, assume that $n\geq 2$ and the result is true for $n-1$. Let $S_{n-1}$ be a $(3(n-1) + 4)$-vertex contact
triangulation of $(\mathbb{S}^3, \xi^{\prime})$, where $d^3(\xi^{\prime}, \xi_{std}) = n-1$. Let $\sigma$ be
a 3-simplex in $S_{n-1}$ and $\beta$ be a 3-simplex in $T^{\varepsilon}_2\subseteq S_{\varepsilon}$. (Observe that the copy of $K_{\varepsilon}$ inside
$S_{\varepsilon}$ is disjoint from $\beta$.) Let $S_n$ be the connected sum of $S_{n-1}$ and $S_{\varepsilon}$ obtained as the union
of $S_{n-1}\setminus\{\alpha\}$ and $S_{\varepsilon}\setminus\{\beta\}$. (A little perturbation makes $S_n$ a smooth triangulation.)
Then the new trefoil $K_{\varepsilon}$ is unlinked with earlier trefoils (in the inductive construction). (Also observe that we can
inductively assume that $\alpha$ is disjoint from all the $n-1$ copies of trefoils in $S_{n-1}$. Clearly, $S_n$ has $3n+1 + 7 -4 = 3n+4$ vertices.

Then $S_n$ is a contact triangulation of $(\mathbb{S}^3, \xi^{\prime})\# (\mathbb{S}^3, \xi_{std}^{K_{\varepsilon}})$. From the construction it follows that $(\mathbb{S}^3, \xi^{\prime}) \# (\mathbb{S}^3, \xi_{std}^{K_{\varepsilon}})$ can be considered as obtained from $(\mathbb{S}^3, \xi^{\prime})$ by performing a Lutz twist along a copy of trefoil $K_{\varepsilon}$ which is unlinked with the earlier trefoils in $S_{n-1}$. So, $(\mathbb{S}^3, \xi^{\prime}) \# (\mathbb{S}^3, \xi_{std}^{K_{\varepsilon}})= (\mathbb{S}^3, \xi)$ for some $\xi$ with $d^3(\xi, \xi_{std})=n$. Now using the uniqueness (up to contact isotopy) of overtwisted contact structure with $d^3(\xi, \xi_{std})= n$, we see that $\xi$ is isotopic to $\xi_{ot}$. Thus, $S_n$ is a contact triangulation of $(\mathbb{S}^3, \xi_{ot})$. The result (for $n>0$) now follows by induction.

Now, assume that  $d^3(\xi_{ot}, \xi_{std}) = n < 0$. In this case, we prove the result by the same method of induction as above by replacing $K$ by unknot $U$, $S_{\varepsilon}$ by $S_0$. (Namely, the result is true for $n=-1$ by Lemma \ref{d_3_-1}. We then take successive connected sums of copies of $S_0$'s. We get $(3|n|+4)$-vertex contact triangulation of $(\mathbb{S}^3, \xi_{ot})$.)

Now assume that $d^3(\xi_{ot}, \xi_{std}) = n = 0$. In this case, the triangulation we get is from $S_0\#S_{\varepsilon}$ (by the same method as above). Thus, we have a triangulation with $(7+7-4) =10$ vertices of $(\mathbb{S}^3, \xi_{ot})$ when $d^3(\xi_{ot}, \xi_{std})=0$.
\end{proof}

\begin{proof}[Proof of Theorem \ref{mainthm-2}]
The proof is very similar to that of Theorem \ref{mainthm-1}. We take connected sum of
triangulations $\mathcal{M} $ with $(T_1 \cup T_2)$ the 7-vertex triangulation of
$(\mathbb{S}^3, \xi_{std})$. Then we perform Lutz twist along positively transverse
unknot which forms the core of $T_1 $ (trefoil which lies on the common boundary $T_1 \cap T_2$) to get $\xi_- $ (respectively $\xi_+ $).
\end{proof}

\begin{proof}[Proof of Corollary \ref{cor-2}]
We start with the given contact triangulation $\mathcal{M} $ of $(M, \xi) $. We need to perform
Lutz twist along positively transverse unknot or right-handed trefoil (both viewed as embedded
in a 3-ball inside $M$). We know that Lutz twisting along both the knots do not change the $d^2 $
invariant as both of them are null-homologus. Thus, the contact structures obtained by performing
Lutz twisting along unknot or trefoil will be classified completely by their relative $d^3$ invariant.
This follows from Eliashberg's result.

Just as in the proof of Corollary \ref{cor-1}, we observe that we need to add $3 $ vertices every time we
perform a Lutz twist along an unlinked trefoil (respectively unknot) Thus, by induction we get a
contact triangulation of $(M, \xi')$, starting from $\mathcal{M}$, with $f_0 (\mathcal{M}) + 3 |n| $
vertices if $n \neq 0 $. As before, we need to perform a Lutz twist along trefoil and then along an
unlinked unknot. Thus, we get a contact triangulation of $(M, \xi')$ with $(f_0(\mathcal{M})+6) $ vertices,
where $d^2(\xi', \xi)=0$ and $d^3(\xi', \xi)=0$.
\end{proof}

The above result says that the vertices in any minimal contact triangulations grow at most linearly with respect to the $d^3 (\xi', \xi)$ with $d^2(\xi', \xi)=0$.  A natural question at this point to ask is that under the same hypothesis, is this bound optimal?
We believe that this is the case. Hence the we would like to propose the following conjecture.

\begin{conjecture}
Let $\xi'$ be a contact structure on $M$ with $d^2(\xi', \xi)=0$ and $d^3(\xi', \xi) = n$. Then for any minimal
contact triangulation $\Sigma$ for $(M, \xi')$, $f_0(\Sigma)= O(n)$.
\end{conjecture}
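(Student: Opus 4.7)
The conjecture asserts that Corollary \ref{cor-2} is sharp: any contact triangulation of $(M,\xi')$ must contain $\Omega(|n|)$ vertices, so that the minimal count is genuinely of order $|n|$. My plan is to establish a combinatorial inequality of the form $|d^3(\xi',\xi)|\le C\cdot f_0(\Sigma)$, valid for every contact triangulation $\Sigma$ of $(M,\xi')$, where $C=C(M,\xi)$ depends only on the ambient manifold and the fixed reference contact structure $\xi$. Once such an estimate is in place, the desired lower bound $f_0(\Sigma)\ge c|n|$ is immediate, and together with Corollary \ref{cor-2} it yields $f_0(\Sigma)=\Theta(|n|)$.

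First, using the hypothesis $d^2(\xi',\xi)=0$, I would homotope $\xi'$ to a $2$-plane field that agrees with $\xi$ over the $2$-skeleton of a common refinement of $\Sigma$ with a fixed reference triangulation of $(M,\xi)$. Then $d^3(\xi',\xi)$ is represented by an integer $3$-cocycle $c_\Sigma$ whose value on each $3$-simplex $\tau$ records the degree of the Gauss map $\partial\tau\to S^2$ comparing $\xi'|_{\partial\tau}$ to $\xi|_{\partial\tau}$. This is the standard obstruction-theoretic picture described in \cite[Section 4.2.3]{Ge}; the task is to bound $c_\Sigma$ in sup-norm by a constant depending only on $(M,\xi)$.

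Next, I would bound the per-tetrahedron contribution $|c_\Sigma(\tau)|$. Because $\Sigma$ is a contact triangulation, $\xi'|_{\tau^\circ}$ is contactomorphic to an open subset of $(\mathbb{R}^3,\xi_{st})$, so the four Legendrian triangles making up $\partial\tau$ sit inside a tight $3$-ball. The Bennequin inequality bounds their Thurston--Bennequin and rotation invariants in terms of Seifert genus, while the simplicial constraint (each tetrahedron has exactly four $2$-faces and six Legendrian edges, and each such edge is shared with only boundedly many tetrahedra) caps their combinatorial complexity. Combining these, I expect an absolute constant $K$ with $|c_\Sigma(\tau)|\le K$ for every $\tau$. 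Summing then gives
\begin{equation*}
|d^3(\xi',\xi)|\ \le\ K\cdot f_3(\Sigma),
\end{equation*}
and the standard inequality $f_3(\Sigma)\le C_0(M)\cdot f_0(\Sigma)$ for closed triangulated $3$-manifolds (from Dehn--Sommerville and a uniform lower bound on vertex-link degrees in a dimension-$3$ pseudomanifold) completes the argument.

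The real obstacle lies in the per-tetrahedron estimate. A priori, even though each filling is tight, the Legendrian boundary graph of a single $\tau$ could accumulate large twisting from the way $\xi'$ has been perturbed to Legendrianise the edges; nothing in the definition directly forbids unboundedly many full twists concentrating on one edge. Ruling this out requires using that the same Legendrian edge lies simultaneously in every incident tetrahedron, so any excessive twisting would either create an overtwisted disk in some $2$-face (contradicting condition (ii) of the definition) or inflate the writhe of a Legendrian graph sitting in a tight ball beyond what Bennequin allows. Making this local-to-global tension quantitative, and producing the absolute constant $K$, is the central technical difficulty and presumably the reason the statement remains only a conjecture.
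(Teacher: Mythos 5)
This statement is left as a conjecture in the paper; the authors offer no proof (they only remark that it might follow from a second conjecture, namely that any contact triangulation of an overtwisted contact structure on $\mathbb{S}^3$ needs at least $7$ vertices). So there is nothing in the paper to match your argument against, and your proposal itself is not a proof: as you candidly note, the whole scheme hinges on a uniform per-tetrahedron bound $|c_\Sigma(\tau)|\le K$ which you do not establish. Beyond being unproved, that step has a structural problem: the relative obstruction class $d^3(\xi',\xi)$ only determines a cohomology class, not a preferred cochain. Its value on an individual $3$-simplex depends on the chosen homotopy of $\xi'$ to $\xi$ over the $2$-skeleton (the hypothesis $d^2(\xi',\xi)=0$ guarantees such a homotopy exists but in no way pins it down), and different choices shift the per-simplex values by arbitrary coboundaries. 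The contact-triangulation conditions constrain $\xi'$ on each tetrahedron only up to contactomorphism, with no reference to $\xi$, so it is not clear they control the degree of any comparison map; producing a canonical representative cocycle adapted to the triangulation, and then bounding it, is precisely the missing idea.

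There is also a concrete false step independent of that gap: the ``standard inequality'' $f_3(\Sigma)\le C_0(M)\, f_0(\Sigma)$ does not hold for triangulated closed $3$-manifolds. Since $\chi=0$ and $2f_2=4f_3$, one has $f_3=f_1-f_0$, and $f_1$ can be as large as $\binom{f_0}{2}$ (neighborly triangulations exist for many $3$-manifolds), so in general $f_3$ grows quadratically in $f_0$. Even granting your per-tetrahedron constant $K$, the argument would only yield $|n|\le K f_3(\Sigma)=O\!\left(f_0(\Sigma)^2\right)$, i.e.\ $f_0(\Sigma)\ge c\sqrt{|n|}$, which is strictly weaker than the conjectured linear lower bound. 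To get linearity along these lines you would need to bound the obstruction by the number of vertices directly (say, by a per-vertex or per-edge accounting), not by the number of tetrahedra, and that again requires new ideas about how the contact conditions localize near the skeleta. So the proposal is a reasonable heuristic outline, but both its key analytic step and its final combinatorial step fail as stated, and the conjecture remains open.
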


The above conjecture may be proved using the following statement which is also a conjecture
we would like to propose.

\begin{conjecture}
Any minimal contact triangulation for an overtwisted contact structure  on $\mathbb{S}^3$ must have at
least $7$ vertices. Equivalently,  if there is a contact triangulation for $(\mathbb{S}^3, \xi) $ with $6$ or less vertices
then $\xi $ is isotopic to $\xi_{std} $.
\end{conjecture}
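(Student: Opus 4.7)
The plan is to proceed by exhaustive enumeration of combinatorial triangulations of $\mathbb{S}^3$ with at most six vertices, ruling out an overtwisted contact realization of each. The list is finite and classical: the unique $5$-vertex triangulation is $\partial \Delta^4$, and the combinatorial types on $6$ vertices have been classified (Altshuler--Steinberg and others). So the task reduces to analyzing finitely many combinatorial types.

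For the $5$-vertex case, one direction is free: by Lemma \ref{tight_triangulation} the standard $\xi_{std}$ admits $\partial \Delta^4$ as a contact triangulation after perturbation. To exclude any other $\xi$, I would exploit the extreme rigidity at each vertex: its link is $\partial \Delta^3$, so only four $3$-simplices meet each vertex. Condition (iii) forces the contact structure on each of these to be a standard Darboux ball, and the four $2$-faces around any edge have Legendrian boundary lying in such balls. Gluing these together one shows the characteristic foliation on the vertex-link $2$-sphere is the standard Morse--Smale foliation of a tight Darboux sphere, forcing $\xi$ to be tight near each vertex, hence globally tight, hence isotopic to $\xi_{std}$ by Eliashberg's uniqueness.

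For the $6$-vertex cases, the same template applies but with more bookkeeping. For each combinatorial type, examine the link of each vertex (a triangulated $2$-sphere on at most $5$ vertices) and the characteristic foliation induced on it by the tight contact structures on the incident $3$-simplices. Since no $2$-face is an overtwisted disk and each $3$-simplex is a piece of $(\mathbb{R}^3,\xi_{st})$, the possible characteristic foliations on the link are severely restricted. The key assertion to establish for each combinatorial type is that these link foliations are necessarily of tight type, which by a Giroux-style argument forces $\xi$ to admit a tight contact neighborhood of the $1$-skeleton; combined with condition (iii) on the $3$-cells this gives tightness on all of $\mathbb{S}^3$, so once again $\xi \simeq \xi_{std}$.

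The main obstacle, and the reason this is stated as a conjecture, is controlling a hypothetical overtwisted disk $D$ that is neither contained in a single $3$-simplex (forbidden by condition (iii)) nor equal to a $2$-face (forbidden by condition (ii)). Such a $D$ meets the $2$-skeleton transversally in a graph $\Gamma$, and each component of $D\setminus \Gamma$ lies in a standard Darboux ball, so its Legendrian boundary pieces carry classical Thurston--Bennequin data. What is needed is a combinatorial Bennequin-type inequality: the twisting $\mathrm{tw}(\partial D,D)=0$ of an overtwisted disk must be bounded above by a negative function of the number of $2$-faces of the triangulation that $D$ crosses. With $f_0\le 6$ and thus very few $2$-faces, the right-hand side would be strictly negative, a contradiction. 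Making this quantitative Bennequin estimate precise is the technical heart of the problem and is exactly what we have not yet carried out.
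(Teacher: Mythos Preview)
The statement under discussion is a \emph{conjecture} in the paper; no proof is given there, so there is nothing to compare your proposal against. What you have written is not a proof but an outline of a possible strategy, and you yourself acknowledge in the final paragraph that the central step is missing.

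More importantly, the vertex-link argument you sketch for the $5$- and $6$-vertex cases cannot work as stated. You write that analyzing the characteristic foliation on the link sphere will show ``$\xi$ to be tight near each vertex, hence globally tight.'' But local tightness never implies global tightness: by Darboux's theorem every contact structure is tight on small balls, and condition~(iii) in the definition of contact triangulation already guarantees tightness on each $3$-simplex. So the conclusion you draw from the link analysis is something you already have for free, and it does not preclude an overtwisted disk that crosses several $3$-simplices. The entire difficulty lives in that transverse intersection with the $2$-skeleton---precisely the scenario you defer to the last paragraph.

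Your proposed remedy, a ``combinatorial Bennequin-type inequality'' bounding $\mathrm{tw}(\partial D, D)$ by a negative function of the number of $2$-faces $D$ crosses, is an interesting heuristic, but as you say it is not carried out, and it is not clear such an inequality exists. The usual Bennequin inequality is a consequence of tightness, not a tool to establish it; reversing that logic in a combinatorial setting would require genuinely new input. Until that step (or some replacement) is made rigorous, the conjecture remains open, which is exactly the status the paper assigns it.
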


\section{3-Torus}\label{3-torus}

In this section, we give contact triangulations for a certain family of overtwisted contact structures on
3-torus obtained by Lutz twisting along a fixed knot which is not  null-homologous. We show that the number
of vertices in minimal triangulations grows at most cubically in terms of relative $d^2$ invariant. We take the 3-torus
$T^3$ as $\mathbb{R}^3 / \mathbb{Z}^3$. In particular, we glue the opposite faces of a 3-dimensional cube
$[0,1]^3 $ to obtain the 3-torus. Let $(x,y,z)$ denote a coordinate system on $\mathbb{R}^3$. We consider
the induced metric on $T^3$ as a quotient of 3-dimensional Euclidean space. In the following examples,
we will perform a Lutz twist along the knot $K=\{({1}/{2}, {1}/{2}, z) : z \in [0,1]\}$. We will produce
a family of contact structures $\xi_n $ on $T^3 $ inductively in which there is a nested sequence of $n $
overtwisted disks centered at $({1}/{2}, {1}/{2}, z) $ for $z \in [0,1] $.

\smallskip

First consider the 1-form $\alpha_0 = \cos (2 \pi x) dz+ \sin (2\pi x) dy$ on $T^3$.
Take a disk $D_{r_0} \subset [0,1]^2$ of radius $r_0\in ({1}/{4}, {1}/{2})$ and center
$({1}/{2}, {1}/{2})$. Let $(r, \phi)$ denote the polar coordinates on $[0,1]^2$ with the point
$({1}/{2}, {1}/{2}) $ as the center. Thus, we have the coordinate system $(r, \phi, z)$ on $T^3$.
Note that the point $(x,y,z)$ has new coordinates $(r, \phi, z)$ where $r= ({(x-{1}/{2})^2 + (y-{1}/{2})^2})^{1/2}$ and
$\phi$ is the angle made by the position vector $(x-{1}/{2}, y-{1}/{2})$ with line segment $y ={1}/{2}$ in $[0,1]^2$.
Therefore, $x= r \cos \phi + {1}/{2}$ and $y=r \sin \phi + {1}/{2}$.
We prove a useful lemma below.

\begin{lemma}
Let $\epsilon >0$ be such that $r_0< r_0+ \epsilon < {1}/{2} $. Let $(r, \phi)$ denote the polar coordinates
on $[0, 1]^2$. Then there exists a contact form $\beta = h_1(r, \phi) dz + h_2(r, \phi) d\phi + h_3(r, \phi) dr$ on $D_{r_0 + \epsilon}
\times \mathbb{S}^1$ such that the following hold.
\begin{enumerate}[{\rm (i)}]
\item $\beta$ agrees with $dz + r^2 d \phi $ on $ D_{r_0} \times \mathbb{S}^1$.
\item Near the boundary of $D_{r_0 + \epsilon} \times \mathbb{S}^1$, $\beta $ agrees
with the contact form $\alpha_0=\cos (2 \pi x) dz + \sin(2 \pi x) dy $ when expressed in the coordinates $( r, \phi, z)$.
\end{enumerate}
\end{lemma}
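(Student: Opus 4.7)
The plan is to construct $\beta$ explicitly by a radial interpolation, mirroring the construction in Lemma~\ref{Lutz_twist_unknot}. I would first rewrite $\alpha_0$ in polar coordinates $(r,\phi,z)$ around $(\tfrac12,\tfrac12)$: substituting $x = \tfrac12 + r\cos\phi$ and $dy = \sin\phi\,dr + r\cos\phi\,d\phi$ yields
\[
\alpha_0 = -\cos(2\pi r\cos\phi)\,dz - r\cos\phi\sin(2\pi r\cos\phi)\,d\phi - \sin\phi\sin(2\pi r\cos\phi)\,dr.
\]
Condition~(i) then dictates $\beta$ on $r\le r_0$ and (ii) dictates $\beta$ on a collar of $\{r = r_0+\epsilon\}$; the substance of the proof is to bridge these two prescriptions by a contact form on the transition annulus.

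The main obstacle is a coorientation mismatch at the core: $(dz + r^2\,d\phi)(\partial_z) = +1$, but $\alpha_0(\partial_z) = -\cos(2\pi r\cos\phi) = -1$ at $r = 0$. Hence a direct convex interpolation forces this coefficient through zero, generally breaking the contact condition. To handle the sign-flip, I would introduce intermediate radii $r_0 < r_1 < r_2 < r_0 + \epsilon$ and carry out the interpolation in two steps. On $[r_0, r_1]$, I take $\beta = g_1(r)\,dz + g_2(r)\,d\phi$ where $(g_1, g_2) \colon [r_0, r_1] \to \mathbb{R}^2 \setminus \{0\}$ is a smooth curve starting at $(1, r_0^2)$ and spiraling counterclockwise through angle roughly $\pi$ to an endpoint $(g_1(r_1), g_2(r_1))$ with $g_1(r_1) < 0$ and $g_2(r_1)$ chosen compatible in sign with the $d\phi$-coefficient of $\alpha_0$ at $r = r_1$. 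A direct computation shows that the contact condition in this stage reduces to $g_1 g_2' - g_2 g_1' \neq 0$, i.e., the curve is nowhere tangent to its own position vector; this is arranged by the same explicit spiral used in the proof of Lemma~\ref{Lutz_twist_unknot}.

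On $[r_1, r_2]$, I would take a smooth cutoff $\eta(r)$ with $\eta(r_1) = 1$ and $\eta(r_2) = 0$ and set
\[
\beta = \eta(r)\bigl(g_1(r_1)\,dz + g_2(r_1)\,d\phi\bigr) + (1-\eta(r))\,\alpha_0.
\]
Both summands now share the same sign on $dz$, and $\alpha_0$ varies only slightly over a short radial interval, so the linear combination preserves the contact condition $\beta \wedge d\beta \neq 0$ by openness of contact forms in the $C^1$-topology. The verification uses the explicit formula
\[
\beta \wedge d\beta = \bigl[h_3\,\partial_\phi h_1 - h_2\,\partial_r h_1 + h_1(\partial_r h_2 - \partial_\phi h_3)\bigr]\,dr \wedge d\phi \wedge dz,
\]
which is nonvanishing at both endpoints and can be kept nonvanishing throughout by shrinking $r_2 - r_1$. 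The hardest step is the first: finding the Lutz-like spiral that flips the $dz$-sign while keeping the radial non-tangency condition intact; once past that, the remaining interpolation is a soft cutoff argument.
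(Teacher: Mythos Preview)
Your second stage has a concrete error. You assert that after the spiral ``both summands now share the same sign on $dz$'', but the $dz$-coefficient of $\alpha_0$ in polar coordinates is $-\cos(2\pi r\cos\phi)$, which genuinely depends on $\phi$: at $\phi=\pi/2$ it equals $-1$, while at $\phi=0$ or $\phi=\pi$ and any $r\in(1/4,1/2)$ one has $2\pi r\cos\phi\in(\pi/2,\pi)$, so the coefficient $-\cos(2\pi r\cos\phi)$ is strictly \emph{positive}. Hence no $\phi$-independent choice of $g_1(r_1)$ can match the sign of $\alpha_0$'s $dz$-coefficient along the whole circle, and your convex interpolation in stage~2 inherits exactly the sign-flip you introduced the spiral to avoid. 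The appeal to ``openness'' and ``shrinking $r_2-r_1$'' does not repair this: openness only says forms $C^1$-close to a given contact form are contact, not that convex combinations of two fixed forms are; and shrinking $r_2-r_1$ makes $\eta'$ large, so the dominant term in $\beta\wedge d\beta$ becomes $-\eta'\,\gamma_0\wedge dr\wedge\gamma_1 = \eta'(g_2h_1-g_1h_2)\,dr\wedge d\phi\wedge dz$, whose sign you have not analysed (and which in fact vanishes along certain $\phi$). Finally, note that your stage-1 spiral is a half Lutz twist along the core: even if the lemma's bare statement allowed it, the resulting $\beta$ would already be overtwisted on $D_{r_0+\epsilon}\times\mathbb{S}^1$, defeating the paper's subsequent use of this lemma to build the base structure $\xi_0$.

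For comparison, the paper does not attempt an explicit interpolation. It computes the contact condition for $\beta=h_1\,dz+h_2\,d\phi+h_3\,dr$ as the scalar inequality $(h_1\partial_r h_2-h_2\partial_r h_1)-(h_1\partial_\phi h_3-h_3\partial_\phi h_1)\neq 0$, rewrites it as the requirement that the vector $v=(0,\,h_1\partial_\phi h_3-h_3\partial_\phi h_1,\,h_1\partial_r h_2-h_2\partial_r h_1)$ avoid the line $\mathbb{R}\cdot(0,1,1)$, and then invokes the connectedness of $\mathbb{R}^3$ minus a line to assert that a triple $(h_1,h_2,h_3)$ with the prescribed boundary values exists. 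This soft argument sidesteps the $\phi$-dependence entirely by allowing $h_1,h_2,h_3$ to depend on $\phi$ from the outset.
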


\begin{proof}
Let $\beta= h_1(r, \phi) dz + h_2(r, \phi) d\phi + h_3(r, \phi) dr$ for smooth functions $h_1, h_2 $ and $h_3$.
Then the contact condition  $\beta \wedge d \beta \neq 0$ holds if and only if
\begin{equation}\label{cc}
(h_1 \partial_r h_2 - h_2 \partial_r h_1) - (h_1 \partial_{\phi}h_3 -h_3 \partial_{\phi} h_1) \neq 0,
\end{equation}
where $\partial_r$ and $\partial_{\phi}$ denote the partial derivatives with respect to $r$ and $\phi$ respectively. We view the triple $(h_1(r, \phi),h_2(r, \phi), h_3(r, \phi))$ as a vector in $\mathbb{R}^3$. Observe that the vector $(0,0,h_1 \partial_r h_2 - h_2 \partial_r h_1)$ is the cross product of vectors $(h_1,h_2, 0)$
and $(\partial_r h_1, \partial_r h_2, 0)$. Similarly, the vector $(0, h_1 \partial_{\phi}h_3 -h_3 \partial_{\phi}h_1, 0)$ is the cross product of the two vectors $(h_1,0,h_3)$ and $(\partial_{\phi}h_1, 0, \partial_{\phi}h_3)$. Now, consider the vector $v:=(0,h_1 \partial_{\phi}h_3-h_3 \partial_{\phi}h_1, h_1 \partial_r h_2 - h_2 \partial_r h_1 )$. Then  \eqref{cc} is equivalent to the condition that  $v\neq \lambda(0,1,1) $ for any $\lambda \in \mathbb{R} $ (i.e., $v$ is non-zero and never parallel to $(0,1,1)$). Note that $\mathbb{R}^3 -\{\lambda (0,1,1) : \lambda \in \mathbb{R} \}$ is connected. Therefore, it is possible to find a triple $(h_1, h_2,h_3)$ such that $(h_1, h_2, h_3) = (1, r^2, 0)$ on $D_{r_0} \times \mathbb{S}^1$ and $h_1 = \cos 2\pi x $, $h_2 = (x-{1}/{2}) \sin 2 \pi x$ and $h_3 = \sin \phi \cdot \sin(2 \pi x)$ near the boundary of $D_{r_0 + \epsilon} \times \mathbb{S}^1 $ where $x = r \cos \phi + {1}/{2}$.
\end{proof}

By the above lemma, we see that $\beta$ and $\alpha_0$ agree near the boundary of $D_{r_0 + \epsilon} \times \mathbb{S}^1$. Thus, we have a contact form $\alpha$ on $T^3$ defined by $\beta $ on $D_{r_0 + \epsilon} \times \mathbb{S}^1$ and by $\alpha_0$ outside of it. Let $ \xi_0$ denote the $\text{ker} (\alpha)$. Then the knot $K= \{({1}/{2}, {1}/{2}, z)\}$ is a positively transverse knot in $(T^3, \xi_0)$. Notice that the contact structure on the neighborhood $D_{r_0} \times \mathbb{S}^1$ of $K$ is,
by virtue of construction, in the standard form. We will now perform a Lutz twist along $K$ on the neighborhood $D_{r_0} \times \mathbb{S}^1$ as follows. We choose functions
$h_1(r)$ and $h_2 (r)$ such that $h_1(r)=-1 $ and $h_2(r)=-r^2 $ for $r \leq {r_0}/2 $ and $(h_1(r), h_2(r))=(1, r^2)$ near $r=r_0$. Thus, the new contact structure $\xi_1 = \xi_0^K$ has overtwisted disks with radius $r \in (r_0/2, r_0)$.

We note that $d^2(\xi_1, \xi_0)= - PD([K])$. We also note that the knot $K$ is a
negatively transverse in $(T^3, \xi_1)$. Again, we perform a Lutz twist along $K$ in the neighborhood $D_{r_1} \times \mathbb{S}^1 $, where $r_1 = {r_0}/{2}$ by
choosing pair of functions $(h_1, h_2)$ which is equal to $(1, r^2) $ for $r \leq r_0/3 $ and $(-1, -r^2)$ near $r= r_0/2$. We denote the new overtwisted contact structure by $\xi_2$. For each slice $[0,1] \times [0,1] \times \{z \} $, there are now two nested overtwisted disks with the smallest overtwisted disk having radius $r \in (r_0/3, r_0/2)$. By additivity of $d^2$ invariant, we see that $d^2(\xi_2, \xi_0)= -2 PD([K])$.

To obtain $\xi_3 =\xi_2^K$, we perform a Lutz twist on the neighborhood $D_{r_0/3} \times \mathbb{S}^1$ of $K$ by using the pair of functions $(h_1, h_2)$ such that the pair agrees with $(-1, -r^2)$ for $r \leq r_0/4$ and equals $(1,r^2) $ near $r= r_0/3$. Notice that we, now, have three nested overtwisted disks with the smallest one
having radius $r \in (r_0/4, r_0/3)$. Also, observe that $d^2(\xi_3, \xi_0)=-3 PD([K])$.

Inductively, we obtain overtwisted contact structures $\xi_n$ from $\xi_{n-1}$ on $T^3$  by performing Lutz twist along $K$ in the neighborhood $D_{r_{n-1}} \times \mathbb{S}^1$, where $r_{n-1} = {r_0}/{n}$. By construction, $\xi_n$ has $n$
overtwisted disks in each section $[0,1] \times [0,1] \times \{z\}$ for $z \in [0,1]$. The smallest overtwisted disk in $[0, 1] \times [0, 1] \times \{z\}$ has the radius $r \in ({r_0}/(n+1), r_0/n)$. We see that $d^2(\xi_n, \xi_0)= - n PD([K])$.
Thus, the contact structure $\xi_n$ and $\xi_m$ are not isotopic for $n \neq m$.

Now, we describe contact triangulations for each of the contact structure on $T^3$. By construction,
these triangulations are close to minimal ones. There are two key ideas involved in the construction.
Firstly, we take the cube $[0,1]^3$ and subdivide it into cubes of smaller size in an efficient manner.
Secondly, we triangulate the smaller cubes in such a way that we get a triangulation of the $[0,1]^3$
which respects the identification on the boundary. Thus, we get a triangulation of $T^3$. While
constructing these triangulations, we make sure that no tetrahedron contains an overtwisted disk by
controlling the diameter of each tetrahedron.

Now, we describe a contact triangulation of $(T^3, \xi_1)$. Firstly, we describe
the following triangulations of the cube which will be used as basic blocks. We give
four different type of triangulations on the cube $[0, {1}/{3}]^3$. We will use them to
triangulate the cube $[0,1]^3$.

\medskip

\noindent \textbf{Type A\,:} We subdivide the cube $[0, {1}/{3}]^3$ into five tetrahedra (it gives the stacked ball structure on $[0, {1}/{3}]^3$) in the following ways (see Figure 7)
\begin{subequations}
\begin{align}\label{A0A1}
 A_0 &= \{a_0a_1a_2a_4, a_1a_2a_3a_7, a_1a_4a_5a_7, a_2a_4a_6a_7, a_1a_2a_4a_7\}, \\
A_1 &= \{ a_0a_1a_3a_5, a_0a_2a_3a_6, a_3a_5a_6a_7, a_0a_4a_5a_6, a_0a_3a_5a_6\}.
\end{align}
\end{subequations}


\setlength{\unitlength}{3.2mm}

\begin{picture}(44,17)(0,0)

\thicklines

\put(4,3){\line(1,0){8}} \put(4,3){\line(0,1){8}}
\put(4,11){\line(1,0){8}} \put(12,3){\line(0,1){8}}
\put(2,15){\line(1,0){8}} \put(2,7){\line(0,1){8}}
\put(2,7){\line(1,0){1.5}} \put(4.4,7){\line(1,0){3}}
\put(8.5,7){\line(1,0){1.5}} \put(10,7){\line(0,1){3.5}}
\put(10,11.5){\line(0,1){3.5}} \put(4,3){\line(-1,2){2}}
\put(4,11){\line(-1,2){2}} \put(12,3){\line(-1,2){2}}
\put(12,11){\line(-1,2){2}}

\thinlines

 \put(12,3){\line(-5,2){7.5}} \put(2,7){\line(5,-2){1.5}}

\put(4,11){\line(3,2){6}}
\put(12,3){\line(-1,6){1.25}} \put(10,15){\line(1,-6){0.6}}

\put(2,7){\line(1,1){1.6}} \put(10,15){\line(-1,-1){3.7}}
\put(4.2,9.06){\mbox{$\cdot$}} \put(4.4,9.25){\mbox{$\cdot$}}
\put(5.15,10){\mbox{$\cdot$}} \put(5.5,10.35){\mbox{$\cdot$}}

\put(12,3){\line(-1,1){8}}

\put(2,7){\line(1,2){2}}

\put(2.2,3.1){\mbox{$a_{0}$}} \put(12.5,3.1){\mbox{$a_{1}$}}
\put(0.8,6){\mbox{$a_{2}$}} \put(8.5,7.5){\mbox{$a_{3}$}}
\put(2.3,11.2){\mbox{$a_{4}$}} \put(12.5,10){\mbox{$a_{5}$}}
\put(0.5,14.4){\mbox{$a_{6}$}} \put(11,14.4){\mbox{$a_{7}$}}

\put(5,1.3){\small \mbox{\boldmath{$A_0$}}}


\thicklines

\put(18,3){\line(1,0){8}} \put(18,3){\line(0,1){8}}
\put(18,11){\line(1,0){8}} \put(26,3){\line(0,1){8}}
\put(16,15){\line(1,0){8}} \put(16,7){\line(0,1){8}}
\put(16,7){\line(1,0){1}} \put(18.4,7){\line(1,0){3}}
\put(22.5,7){\line(1,0){1.5}} \put(24,7){\line(0,1){1.5}}
\put(24,9.5){\line(0,1){1.1}}
\put(24,12.3){\line(0,1){2.7}} \put(18,3){\line(-1,2){2}}
\put(18,11){\line(-1,2){2}} \put(26,3){\line(-1,2){2}}
\put(26,11){\line(-1,2){2}} 

 \put(18,3){\line(3,2){6}}

 \put(26,11){\line(-5,2){10}} 

  \put(23.3,7.19){\mbox{$\cdot$}} \put(23,7.49){\mbox{$\cdot$}}

\thinlines

 \put(24,7){\line(1,2){2}}
\put(16,15){\line(1,-1){3.5}} 
\put(18,3){\line(1,1){8}}
\put(22.6,8.4){\line(-1,1){2}}


\put(18,3){\line(-1,6){2}}

\put(16.2,3.1){\mbox{$a_{0}$}} \put(26.5,3.1){\mbox{$a_{1}$}}
\put(14.8,6){\mbox{$a_{2}$}} \put(24.5,6.5){\mbox{$a_{3}$}}
\put(18.4,10.1){\mbox{$a_{4}$}} \put(26.5,10){\mbox{$a_{5}$}}
\put(14.5,14.4){\mbox{$a_{6}$}} \put(25,14.4){\mbox{$a_{7}$}}

\put(22,1.3){\small \mbox{\boldmath{$A_1$}}}


\thicklines

\put(32,3){\line(1,0){8}} \put(32,3){\line(0,1){8}}
\put(32,11){\line(1,0){8}} \put(40,3){\line(0,1){8}}
\put(30,15){\line(1,0){8}} \put(30,7){\line(0,1){8}}
\put(30,7){\line(1,0){1.5}} \put(32.4,7){\line(1,0){3}}
\put(36.5,7){\line(1,0){1.5}} \put(38,7){\line(0,1){3.5}}
\put(38,11.5){\line(0,1){3.5}} \put(32,3){\line(-1,2){2}}
\put(32,11){\line(-1,2){2}} \put(40,3){\line(-1,2){2}}
\put(40,11){\line(-1,2){2}}

\thinlines

\put(40,3){\line(-5,2){7.5}} \put(30,7){\line(5,-2){1.5}}

\put(32,11){\line(3,2){6}}
\put(40,3){\line(-1,6){1.25}} \put(38,15){\line(1,-6){0.6}}
\put(38,7){\line(-1,1){3.6}} \put(30,15){\line(1,-1){2.8}}
\put(33.5,11.1){\mbox{$\cdot$}}

\put(40,3){\line(-1,1){8}}

\put(30,7){\line(1,2){2}}

\put(30.2,3.1){\mbox{$a_{0}$}} \put(40.5,3.1){\mbox{$a_{1}$}}
\put(28.8,6){\mbox{$a_{2}$}} \put(36.7,8.5){\mbox{$a_{3}$}}
\put(30.3,11.2){\mbox{$a_{4}$}} \put(40.5,10){\mbox{$a_{5}$}}
\put(28.5,14.4){\mbox{$a_{6}$}} \put(39,14.4){\mbox{$a_{7}$}}

\put(35,1.3){\small \mbox{\boldmath{$C_{05}$}}}

\put(11,0.5){\mbox{}\label{A-type_cubes}}

\end{picture}

\setlength{\unitlength}{3.2mm}
\begin{picture}(44,17)(0,-1)

\thicklines

\put(4,3){\line(1,0){8}} \put(4,3){\line(0,1){8}}
\put(4,11){\line(1,0){8}} \put(12,3){\line(0,1){8}}
\put(2,15){\line(1,0){8}} \put(2,7){\line(0,1){8}}
\put(2,7){\line(1,0){1.5}} \put(4.4,7){\line(1,0){3}}
\put(8.5,7){\line(1,0){1.5}} \put(10,7){\line(0,1){3.5}}
\put(10,15){\line(0,-1){2.7}} \put(4,3){\line(-1,2){2}}
\put(4,11){\line(-1,2){2}} \put(12,3){\line(-1,2){2}}
\put(12,11){\line(-1,2){2}}


 \put(12,3){\line(-5,2){7.5}} \put(2,7){\line(5,-2){1.5}}
\put(10,7){\line(1,2){2}}



\put(12,3){\line(-1,1){8}}

\put(2,7){\line(1,2){2}}

\put(12,11){\line(-1,2){2}} 
\put(12,11){\line(-5,2){10}} 
\put(2,15){\line(1,-1){3.5}} \put(10,7){\line(-1,1){3.5}}

\put(2.2,3.1){\mbox{$a_{0}$}} \put(12.5,3.1){\mbox{$a_{1}$}}
\put(0.8,6){\mbox{$a_{2}$}} \put(10.5,6.5){\mbox{$a_{3}$}}
\put(2.3,11.2){\mbox{$a_{4}$}} \put(12.5,10){\mbox{$a_{5}$}}
\put(0.5,14.4){\mbox{$a_{6}$}} \put(11,14.4){\mbox{$a_{7}$}}

\put(5,1.3){\small \mbox{\boldmath{$B_0$}}}


\thicklines

\put(18,3){\line(1,0){8}} \put(18,3){\line(0,1){8}}
\put(18,11){\line(1,0){8}} \put(26,3){\line(0,1){8}}
\put(16,15){\line(1,0){8}} \put(16,7){\line(0,1){8}}
\put(16,7){\line(1,0){1.5}} \put(18.4,7){\line(1,0){3}}
\put(22.5,7){\line(1,0){1.5}} \put(24,7){\line(0,1){1.5}}
\put(24,9.5){\line(0,1){1.1}}

\put(24,11.5){\line(0,1){3.5}} \put(18,3){\line(-1,2){2}}
\put(18,11){\line(-1,2){2}} \put(26,3){\line(-1,2){2}}
\put(26,11){\line(-1,2){2}}

 \put(18,3){\line(3,2){6}}
 \put(18,11){\line(3,2){6}}

\thinlines

 \put(24,7){\line(1,2){2}}
 \put(18,3){\line(1,1){8}}
\put(16,7){\line(1,2){2}}


\put(16,7){\line(1,1){1.6}} \put(24,15){\line(-1,-1){3.7}}
\put(18.4,9.4){\line(1,1){1.2}}


\put(16.2,3.1){\mbox{$a_{0}$}} \put(26.5,3.1){\mbox{$a_{1}$}}
\put(14.8,6){\mbox{$a_{2}$}} \put(24.5,6.5){\mbox{$a_{3}$}}
\put(16.3,11.2){\mbox{$a_{4}$}} \put(26.5,10){\mbox{$a_{5}$}}
\put(14.5,14.4){\mbox{$a_{6}$}} \put(25,14.4){\mbox{$a_{7}$}}

\put(22,1.3){\small \mbox{\boldmath{$B_1$}}}


\thicklines

\put(32,3){\line(1,0){8}} \put(32,3){\line(0,1){8}}
\put(32,11){\line(1,0){8}} \put(40,3){\line(0,1){8}}
\put(30,15){\line(1,0){8}} \put(30,7){\line(0,1){8}}
\put(30,7){\line(1,0){1.5}} \put(32.4,7){\line(1,0){3}}
\put(36.5,7){\line(1,0){1.5}} \put(38,7){\line(0,1){3.5}}
\put(38,11.5){\line(0,1){3.5}} \put(32,3){\line(-1,2){2}}
\put(32,11){\line(-1,2){2}} \put(40,3){\line(-1,2){2}}
\put(40,11){\line(-1,2){2}}

\thinlines

\put(32,11){\line(3,2){6}}
\put(38,7){\line(-1,1){3.6}} \put(30,15){\line(1,-1){2.8}}
\put(33.5,11.1){\mbox{$\cdot$}}

\put(38,7){\line(1,2){2}}
\put(40,3){\line(-1,1){8}}
\put(30,7){\line(1,2){2}}

\put(32,3){\line(3,2){4.5}} \put(37,6.14){\mbox{$\cdot$}}
\put(37.35,6.35){\mbox{$\cdot$}}

\put(30.2,3.1){\mbox{$a_{0}$}} \put(40.5,3.1){\mbox{$a_{1}$}}
\put(28.8,6){\mbox{$a_{2}$}} \put(36.7,8.5){\mbox{$a_{3}$}}
\put(30.3,11.2){\mbox{$a_{4}$}} \put(40.5,10){\mbox{$a_{5}$}}
\put(28.5,14.4){\mbox{$a_{6}$}} \put(39,14.4){\mbox{$a_{7}$}}

\put(35,1.3){\small \mbox{\boldmath{$E$}}}

\put(11,0.5){\mbox{Figure 7}}

\end{picture}


\noindent \textbf{Type B\,:} To construct type B triangulations on the unit cube $[0,{1}/{3}]^3$, first take out two disjoint tetrahedra. The remaining polytope has eight triangular faces. We subdivide this polytope into 8 tetrahedra which have a common new vertex $b$ at the center $({1}/{6}, {1}/{6}, {1}/{6})$ (i.e., cone over the boundary with apex $b$). For instance, to form the triangulation $B_0$, we take out the tetrahedra $a_0a_2a_1a_4$ and $a_3a_5a_6a_7$ and subdivide the remaining into the following 8 tetrahedra (the boundary of the triangulated 3-ball $B_0$ is shown in  Figure 7)
\begin{equation}
ba_1a_2a_4, ba_1a_4a_5, ba_1a_3a_5, ba_3a_5a_6, ba_2a_3a_6, ba_2a_4a_6, ba_4a_5a_6, ba_1a_2a_3.
\end{equation}

For $B_1$ (respectively, $B_2$) we first take out the tetrahedra $a_0a_1a_3a_5$ and $a_2a_4a_6a_7$ (respectively, $a_1a_2a_3a_7$ and $a_0a_4a_5a_6$) and do the similar subdivision on the remaining polytope.

\medskip

\noindent \textbf{Type C\,:}
Choose a pair of diagonally opposite vertices, say $a_i$ and $a_j$, of a 2-face of the cube $[0, {1}/{3}]^3$. Let the neighbours of $a_i$ (resp., $a_j$) be $a_k$, $a_{\ell}$, $a_r$ (resp., $a_k$, $a_{\ell}$, $a_s$). First take out the tetrahedra
$a_ia_ka_{\ell}a_r$ and $a_ja_ka_{\ell}a_s$. Now, subdivide the remaining polytope as before adding the vertex $b$. Denote this triangulation by $C_{ij}$. The longest edge in $C_{ij}$ has length ${\sqrt{2}}/{3}$. The boundary of the triangulated 3-ball  $C_{05}$ is shown in Figure 7.

\medskip

\noindent \textbf{Type E\,}: Triangulation $E$ of $[0, 1/3]^{3}$ consists of the following 12 tetrahedra (the boundary is shown in Figure 7).
\begin{align}
ba_0a_1a_3, ba_0a_2a_3, ba_0a_1a_4, ba_0a_2a_4, ba_1a_3a_5, ba_1a_4a_5, \nonumber \\ ba_2a_3a_6, ba_2a_4a_6, ba_3a_5a_7, ba_3a_6a_7, ba_4a_5a_7, ba_4a_6a_7.
\end{align}

\medskip

We put together the cubes as shown in Figure 8 (a) to get a triangulation of $[0,1]^3$.

\setlength{\unitlength}{3mm}
\begin{picture}(48,28)(0,0)

\thicklines

\put(1,9){\line(0,1){16}} \put(16,3){\line(0,1){16}}
\put(4,10){\line(0,1){1}} \put(4,18){\line(0,1){1}}
\put(13,9){\line(0,1){1}} \put(13,17){\line(0,1){1}}

\put(4,3){\line(1,0){12}} \put(1,9){\line(1,0){12}}
\put(4,3){\line(-1,2){3}} \put(16,3){\line(-1,2){3}}

\thinlines

\put(3,5){\line(1,0){12}} \put(2,7){\line(1,0){12}}
\put(8,3){\line(-1,2){3}} \put(12,3){\line(-1,2){3}}

\put(4.2,3.5){\mbox{$A_{0}$}} \put(8.2,3.5){\mbox{$B_{2}$}}
\put(12.2,3.5){\mbox{$A_{1}$}} \put(3.2,5.5){\mbox{$B_{1}$}}
\put(7.2,5.5){\mbox{$A_{0}$}} \put(11.2,5.5){\mbox{$C_{27}$}}
\put(2.2,7.5){\mbox{$A_{1}$}} \put(6.2,7.5){\mbox{$C_{17}$}} \put(10.2,7.5){\mbox{$A_{0}$}}

\put(4,11){\line(1,0){12}} \put(1,17){\line(1,0){12}}
\put(4,11){\line(-1,2){3}} \put(16,11){\line(-1,2){3}}

\thinlines

\put(3,13){\line(1,0){12}} \put(2,15){\line(1,0){12}}
\put(8,11){\line(-1,2){3}} \put(12,11){\line(-1,2){3}}

\put(4.2,11.5){\mbox{$C_{56}$}} \put(8.2,11.5){\mbox{$A_{1}$}}
\put(12.2,11.5){\mbox{$B_{0}$}} \put(3.2,13.5){\mbox{$A_{1}$}}
\put(7.2,13.5){\mbox{$E$}} \put(11.2,13.5){\mbox{$A_{0}$}}
\put(2.2,15.5){\mbox{$B_{0}$}} \put(6.2,15.5){\mbox{$A_{0}$}} \put(10.2,15.5){\mbox{$C_{12}$}}

\put(4,19){\line(1,0){12}} \put(1,25){\line(1,0){12}}
\put(4,19){\line(-1,2){3}} \put(16,19){\line(-1,2){3}}

\thinlines

\put(3,21){\line(1,0){12}} \put(2,23){\line(1,0){12}}
\put(8,19){\line(-1,2){3}} \put(12,19){\line(-1,2){3}}

\put(4.2,19.5){\mbox{$A_{1}$}} \put(8.2,19.5){\mbox{$C_{06}$}}
\put(12.2,19.5){\mbox{$A_{0}$}} \put(3.2,21.5){\mbox{$C_{05}$}}
\put(7.2,21.5){\mbox{$A_{1}$}} \put(11.2,21.5){\mbox{$B_{1}$}}
\put(2.2,23.5){\mbox{$A_{0}$}} \put(6.2,23.5){\mbox{$B_{2}$}} \put(10.2,23.5){\mbox{$A_{1}$}}

\put(18,3.5){\mbox{(a)}} \put(22,3.5){\mbox{(b)}}


\thicklines

\put(21,13){\line(0,1){4.5}} \put(21,27){\line(0,-1){6.5}}
\put(20.8,18){\mbox{$\vdots$}} \put(20.8,19){\mbox{$\vdots$}}

\put(46,3){\line(0,1){6.5}} \put(46,17){\line(0,-1){4.5}}
\put(45.8,10){\mbox{$\vdots$}} \put(45.8,11){\mbox{$\vdots$}}

\put(26,16){\line(0,1){1}} \put(41,13){\line(0,1){1}}

\put(26,3){\line(1,0){9}} \put(37,3){\line(1,0){9}}
\put(21,13){\line(1,0){9}} \put(32,13){\line(1,0){9}}
\put(26,17){\line(1,0){9}} \put(37,17){\line(1,0){9}}
\put(21,27){\line(1,0){9}} \put(32,27){\line(1,0){9}}

\put(35.2,2.95){\mbox{$\dots$}}
\put(30.2,12.95){\mbox{$\dots$}}
\put(35.2,16.95){\mbox{$\dots$}}
\put(30.2,26.95){\mbox{$\dots$}}

\put(26,3){\line(-1,2){2.2}} \put(46,3){\line(-1,2){2.2}}
\put(26,17){\line(-1,2){2.2}} \put(46,17){\line(-1,2){2.2}}
\put(21,13){\line(1,-2){2.2}} \put(41,13){\line(1,-2){2.2}}
\put(21,27){\line(1,-2){2.2}} \put(41,27){\line(1,-2){2.2}}

\thinlines

\put(24,7){\line(1,0){9}} \put(35,7){\line(1,0){9}}
\put(23,9){\line(1,0){9}} \put(34,9){\line(1,0){9}}
\put(24,21){\line(1,0){9}} \put(35,21){\line(1,0){9}}
\put(23,23){\line(1,0){9}} \put(34,23){\line(1,0){9}}

\put(33.2,6.95){\mbox{$\dots$}}
\put(32.2,8.95){\mbox{$\dots$}}
\put(33.2,20.95){\mbox{$\dots$}}
\put(32.2,22.95){\mbox{$\dots$}}

\put(25,5){\line(1,0){9}} \put(36,5){\line(1,0){9}}
\put(22,11){\line(1,0){9}} \put(33,11){\line(1,0){9}}
\put(25,19){\line(1,0){9}} \put(36,19){\line(1,0){9}}
\put(22,25){\line(1,0){9}} \put(33,25){\line(1,0){9}}

\put(34.2,4.95){\mbox{$\dots$}}
\put(31.2,10.95){\mbox{$\dots$}}
\put(34.2,18.95){\mbox{$\dots$}}
\put(31.2,24.95){\mbox{$\dots$}}

\put(34,3){\line(-1,2){2.2}} \put(38,3){\line(-1,2){2.2}}
\put(34,17){\line(-1,2){2.2}} \put(38,17){\line(-1,2){2.2}}
\put(29,13){\line(1,-2){2.2}} \put(33,13){\line(1,-2){2.2}}
\put(29,27){\line(1,-2){2.2}} \put(33,27){\line(1,-2){2.2}}

\put(30,3){\line(-1,2){2.2}} \put(42,3){\line(-1,2){2.2}}
\put(30,17){\line(-1,2){2.2}} \put(42,17){\line(-1,2){2.2}}
\put(25,13){\line(1,-2){2.2}} \put(37,13){\line(1,-2){2.2}}
\put(25,27){\line(1,-2){2.2}} \put(37,27){\line(1,-2){2.2}}



\put(23.2,8.2){\mbox{$.$}} \put(23.4,7.8){\mbox{$.$}}
\put(27.2,8.2){\mbox{$.$}} \put(27.4,7.8){\mbox{$.$}}
\put(31.2,8.2){\mbox{$.$}} \put(31.4,7.8){\mbox{$.$}}
\put(35.2,8.2){\mbox{$.$}} \put(35.4,7.8){\mbox{$.$}}
\put(39.2,8.2){\mbox{$.$}} \put(39.4,7.8){\mbox{$.$}}
\put(43.2,8.2){\mbox{$.$}} \put(43.4,7.8){\mbox{$.$}}

\put(23.2,22.2){\mbox{$.$}} \put(23.4,21.8){\mbox{$.$}}
\put(27.2,22.2){\mbox{$.$}} \put(27.4,21.8){\mbox{$.$}}
\put(31.2,22.2){\mbox{$.$}} \put(31.4,21.8){\mbox{$.$}}
\put(35.2,22.2){\mbox{$.$}} \put(35.4,21.8){\mbox{$.$}}
\put(39.2,22.2){\mbox{$.$}} \put(39.4,21.8){\mbox{$.$}}
\put(43.2,22.2){\mbox{$.$}} \put(43.4,21.8){\mbox{$.$}}

\put(26.5,3.5){\mbox{$A_{0}$}} \put(30.5,3.5){\mbox{$A_{1}$}}
\put(38.5,3.5){\mbox{$A_{0}$}} \put(42.5,3.5){\mbox{$A_{1}$}}

\put(25.5,5.5){\mbox{$A_{1}$}} \put(29.5,5.5){\mbox{$A_{0}$}}
\put(37.5,5.5){\mbox{$A_{1}$}} \put(41.5,5.5){\mbox{$A_{0}$}}

\put(23.5,9.5){\mbox{$A_{0}$}} \put(27.5,9.5){\mbox{$A_{1}$}}
\put(35.5,9.5){\mbox{$A_{0}$}} \put(39.5,9.5){\mbox{$A_{1}$}}

\put(22.5,11.5){\mbox{$A_{1}$}} \put(26.5,11.5){\mbox{$A_{0}$}}
\put(34.5,11.5){\mbox{$A_{1}$}} \put(38.5,11.5){\mbox{$A_{0}$}}

\put(26.5,17.5){\mbox{$A_{1}$}} \put(30.5,17.5){\mbox{$A_{0}$}}
\put(38.5,17.5){\mbox{$A_{1}$}} \put(42.5,17.5){\mbox{$A_{0}$}}

\put(25.5,19.5){\mbox{$A_{0}$}} \put(29.5,19.5){\mbox{$A_{1}$}}
\put(37.5,19.5){\mbox{$A_{0}$}} \put(41.5,19.5){\mbox{$A_{1}$}}

\put(23.5,23.5){\mbox{$A_{1}$}} \put(27.5,23.5){\mbox{$A_{0}$}}
\put(35.5,23.5){\mbox{$A_{1}$}} \put(39.5,23.5){\mbox{$A_{0}$}}

\put(22.5,25.5){\mbox{$A_{0}$}} \put(26.5,25.5){\mbox{$A_{1}$}}
\put(34.5,25.5){\mbox{$A_{0}$}} \put(38.5,25.5){\mbox{$A_{1}$}}

\put(1,0){\mbox{Figure\,8\,: {\bf Triangulations of the cube.} (a) {\bf Mesh} \boldmath{$\leq \frac{\sqrt{2}}{3}$},
(b) {\bf mesh} \boldmath{$\leq \frac{1}{m\sqrt{2}}$}}}
\end{picture}

\begin{lemma} \label{lemma:77-vertex}
 There exists a $77$-vertex triangulation $X$ of $[0,1]^3$ which satisfies the following.
 \begin{enumerate}[{\rm (a)}]
  \item Each $2$-face of $[0,1]^3 $ contains $16 $ vertices.

  \item There are $64$ vertices of the form $({i}/{3}, {j}/{3}, {k}/{3})$, $0 \leq i,j,k \leq 3$.

  \item There are $13$ vertices are from the set $\left\{\left(\frac{i + 1/2}{3}, \frac{j + 1/2}{3}, \frac{k + 1/2}{3}\right) \, : \, 0 \leq i,j,k \leq 2\right\}$ of $27$ points.

  \item The triangulation of $\{1\} \times [0,1] \times [0,1] $ is the translation of the triangulation
     of $\{0\}\times [0,1] \times [0,1]$.
        Similar thing holds true for the other pairs of opposite faces of the cube.
   \item The diameter of each tetrahedron is at most ${\sqrt{2}}/{3}$.
 \end{enumerate}
\end{lemma}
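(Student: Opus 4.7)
The plan is to construct $X$ explicitly following the template of Figure 8(a). I subdivide $[0,1]^3$ into $27$ congruent sub-cubes $Q_{ijk} = [i/3, (i+1)/3] \times [j/3, (j+1)/3] \times [k/3, (k+1)/3]$, $0 \leq i, j, k \leq 2$, and triangulate each $Q_{ijk}$ using one of the types $A_0, A_1, B_0, B_1, B_2, C_{ij}, E$ defined in the preceding paragraphs, with the assignment prescribed by Figure 8(a). The 14 sub-cubes assigned a type from $\{A_0, A_1\}$ contribute no interior vertex, while the remaining 13 sub-cubes (assigned a type from $\{B_0, B_1, B_2, C_{ij}, E\}$) each contribute one centroid vertex at $\left(\frac{i+1/2}{3}, \frac{j+1/2}{3}, \frac{k+1/2}{3}\right)$. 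Together with the $4^3 = 64$ lattice vertices $(i/3, j/3, k/3)$ with $0 \leq i, j, k \leq 3$, this gives exactly $64 + 13 = 77$ vertices.

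The main technical step is verifying that adjacent sub-cubes induce the same triangulation on their shared $2$-face. Each such face is a square triangulated by a single diagonal, so compatibility reduces to checking the diagonal choice on both sides. The decisive observation is that the six face-diagonals of $A_0$ all lie in the ``odd-parity'' vertex subset $\{a_1, a_2, a_4, a_7\}$, while those of $A_1$ lie in the complementary ``even-parity'' subset $\{a_0, a_3, a_5, a_6\}$; since the natural vertex-bijection between adjacent sub-cubes flips local parity, an $A_0$ face is automatically compatible with an adjacent $A_1$ face. For the mixed types $B_i, C_{ij}, E$, the face-diagonals on the six exterior faces are determined by the positions of the two removed corner tetrahedra (for $B_i, C_{ij}$) together with the cone apex; each such type has a specific mixture of ``odd-odd'' and ``even-even'' face-diagonals, and Figure 8(a) places each mixed-type sub-cube in the unique position where its face-diagonal pattern matches the six neighbouring $A$-types. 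I carry out the verification for each of the $54$ interior shared faces.

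Conditions (a)--(e) then follow by direct inspection. The $16$ lattice points on each boundary $2$-face of $[0,1]^3$ give (a), the total of $64$ lattice vertices gives (b), and the $13$ centroids give (c). For (d), Figure 8(a) is designed so that each of the three outermost pairs of sub-cube layers is related by the axis-translation by $1$: the triangulations on $\{0\} \times [0,1]^2$ and $\{1\} \times [0,1]^2$ coincide under translation, and similarly in the $y$- and $z$-directions. For (e), every edge of every tetrahedron in any type is either a sub-cube edge of length $1/3$, a sub-cube face-diagonal of length $\sqrt{2}/3$, or a centroid-to-corner segment of length $\sqrt{3}/6 < \sqrt{2}/3$; since no tetrahedron uses the space-diagonal of length $\sqrt{3}/3$, every tetrahedron has diameter at most $\sqrt{2}/3$.

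The main obstacle is the compatibility check. Although each individual face-match is elementary, globally one must satisfy the $A_0/A_1$ matching constraint on all $54$ interior shared faces together with the three periodic identifications in (d) simultaneously. The content of the lemma is essentially that Figure 8(a) provides an explicit witness to the existence of such a globally compatible arrangement.
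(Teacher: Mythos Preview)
Your proposal is correct and follows essentially the same approach as the paper: subdivide $[0,1]^3$ into $27$ sub-cubes, assign the types $A_0, A_1, B_i, C_{ij}, E$ according to Figure~8(a), and read off the vertex count and properties (a)--(e). You are in fact more explicit than the paper about the face-compatibility check (your parity observation for $A_0/A_1$) and the diameter bound via edge lengths, but the construction and the logic are the same.
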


\begin{proof}
 We first subdivide the cube $[0,1]^3 $ into $27$ small cubes (of equal sizes) of type $[0, {1}/{3}]^3 $.  Then we subdivide each cube consistently  so that on the common face of two cubes the subdivisions coming from  each cube agree. We take the subdivision of $[0,1]^2 \times [0, {1}/{3}]$ as the union of $A_0$, $B_2 + ({1}/{3}, 0,0)$, $A_1 + ({2}/{3}, 0,0)$, $B_1+(0, {1}/{3},0)$, $A_0+ ({1}/{3},{1}/{3},0)$, $C_{27} + ({2}/{3}, {1}/{3}, 0)$, $A_1 +(0,0, {2}/{3})$, $C_{17} +({1}/{3}, {2}/{3}, 0)$, $A_0+({2}/{3}, {2}/{3}, 0)$. Similarly, we subdivide $[0,1]^2 \times [{1}/{3}, {2}/{3}]$ and $[0,1]^2 \times [{2}/{3}, 1]$. This is shown in Figure 8 (a). Observe that the subdivision of $[0,1]^2 \times \{ 1\}$ is the translation of  subdivision of $[0,1]^1 \times \{0\} $. Similarly, other pairs of the opposite faces of the cube $[0,1]^3$  have subdivision that are translations of each other.

 There are $64$ vertices on the boundary of the cube $[0,1]^3$ and $13$ vertices lie in the interior of it.
 Thus, the total number of vertices is $77$.  We used fourteen $A$-type cubes, six $B$-type, six $C$-type and one
 of $E$-type. Therefore, the number of tetrahedra is $14 \times 5 + 6 \times 10 + 6 \times 10 + 1 \times 12 =202$. Let the number of 2-simplices be $f_2$. Counting the number of ordered pair $(\sigma, \gamma)$, where $\gamma$ is a 2-simplex in a 3-simplex $\sigma$, we get $202\times 4 = 108\times 1 + (f_2 - 108)\times 2$. Thus, $f_2 = 458$. Since the Euler characteristic of a triangulated 3-ball is 1, it follows that the number of  edges is $332$.

From construction, the lengths of edges are $1/3$, ${1}/{3}$, ${\sqrt{2}}/{3}$ and ${1}/{(2\sqrt{3})}$ respectively. This completes the proof.
\end{proof}

\begin{corollary}
There exists a contact triangulation of $(T^3, \xi_1)$ with $40$ vertices and $202$ tetrahedra.
\end{corollary}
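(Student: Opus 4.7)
The plan is to take the $77$-vertex triangulation $X$ of $[0,1]^3$ from Lemma \ref{lemma:77-vertex} and descend it to the quotient $T^3=[0,1]^3/{\sim}$. By property (d) of that lemma, the triangulations induced on opposite faces of $[0,1]^3$ are translates of one another, so the face-identifications defining $T^3$ are simplicial and yield a well-defined triangulation $\mathcal{Y}$ of $T^3$. Under these identifications the $64$ grid vertices $(i/3,j/3,k/3)$ with $0\le i,j,k\le 3$ collapse to the $27$ points with $0\le i,j,k\le 2$, while the $13$ interior vertices of $X$ are preserved and no $3$-simplex is identified with another. Hence $f_0(\mathcal{Y})=27+13=40$ and the number of tetrahedra remains $202$.

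It remains to turn $\mathcal{Y}$ into a \emph{contact} triangulation of $(T^3,\xi_1)$, and the heart of the argument is a diameter estimate. In the Lutz-twist construction of $\xi_1$ the radius $r_0$ can be chosen freely in $(1/4,1/2)$; we choose it in $(\sqrt{2}/3,\,1/2)$. The overtwisted disks of $\xi_1$ are the meridional disks $D_r\times\{\theta\}$ with $r\in(r_0/2,r_0)$, each of Euclidean diameter $2r>r_0>\sqrt{2}/3$. Since every tetrahedron of $\mathcal{Y}$ has diameter at most $\sqrt{2}/3$ by property (e) of Lemma \ref{lemma:77-vertex}, no tetrahedron can contain an overtwisted disk and no $2$-face can coincide with one. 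Therefore $\xi_1$ restricted to the interior of each tetrahedron is tight, hence contactomorphic to $(B,\xi_{st})$ by Eliashberg's uniqueness theorem for tight contact structures on the $3$-ball; this gives conditions (ii) and (iii) of a contact triangulation.

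Finally, to obtain condition (i), I perturb the $2$-faces of $\mathcal{Y}$ slightly so that every edge becomes a Legendrian arc for $\xi_1$, exactly as in the proofs of Lemmas \ref{d_3_-1} and \ref{d_3_+1}. The strict margin $r_0-\sqrt{2}/3>0$ guarantees that sufficiently small perturbations preserve both the diameter bound on tetrahedra and the non-coincidence of $2$-faces with overtwisted disks, so conditions (ii) and (iii) persist after the perturbation. The main subtlety is precisely this tension: the subdivision of $[0,1]^3$ must be fine enough that every tetrahedron has diameter strictly below that of the smallest overtwisted disk, yet coarse enough to keep the vertex count small; the carefully engineered $27$-subcube decomposition of Lemma \ref{lemma:77-vertex}, with its controlled basic blocks $A_0,A_1,B_i,C_{ij},E$, is exactly what provides that balance.
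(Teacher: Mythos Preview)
Your argument is correct and follows the same route as the paper: descend the $77$-vertex cube triangulation $X$ to $T^3$ via property (d), count $27+13=40$ vertices and $202$ tetrahedra, compare the tetrahedron diameter bound from property (e) with the diameter of the smallest overtwisted disk produced by the Lutz twist, and finish by perturbing $2$-faces to make edges Legendrian. The only difference is cosmetic: the paper uses the cruder bound ``diameter $<1/2$'' for the tetrahedra and ``$>1/2$'' for the overtwisted disks, whereas you sharpen this by explicitly choosing $r_0\in(\sqrt{2}/3,\,1/2)$ so that the comparison goes through directly with the $\sqrt{2}/3$ bound from Lemma~\ref{lemma:77-vertex}; this makes the numerical step more transparent but is otherwise the same idea.
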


\begin{proof}
Observe that the triangulation $X$ (in Lemma \ref{lemma:77-vertex}) on the cube induces a triangulation of the 3-torus as the triangulation of opposite faces match. It has 202 tetrahedra. Clearly, the number of vertices is $27+13=40$.
We also note that diameter of each tetrahedron strictly less than ${1}/{2}$.
This ensures that the contact structure $\xi_1$ restricted to each tetrahedron
is tight as the smallest overtwisted disk has diameter greater than ${1}/{2}$.
This also implies that each 2-face is not an overtwisted disk. Now, we perturb
all the triangles to make the edges Legendrian.
\end{proof}


\begin{proof}[Proof Of Theorem \ref{3-torus_thm}]
We recall that the smallest overtwisted disk has the radius ${r_0}/{n}$. Thus, subdivide the unit cube $[0,1]^3 $ into $8n^3$ small cubes of the size $\frac{1}{2n} \times \frac{1}{2n} \times \frac{1}{2n}$ as shown in  Figure 8 (b). We then subdivide each cube into tetrahedra by using type A triangulations. In particular, we use $\frac{3}{2n}A_0$ and $\frac{3}{2n} A_1$ where the factor ${3}/{2n}$ in front of $A_i$ means scaling of $A_i$ in all coordinate directions by factor of ${3}/{2n}$. We first take triangulation $\frac{3}{2n}A_0$ for the cube $[0, {1}/{2n}]^3$. Now if two small cubes in $[0,1]^3$ have a common face then we take $\frac{3}{2n}A_0$ for one of the cubes and $\frac{3}{2n}A_1$ for the other. Thus, there is a unique consistent arrangement of smaller cubes. Moreover, we get a subdivision of the cube $[0,1]^3$ so that the induced triangulations on opposite faces match. Therefore, we get a triangulation of the 3-torus. Observe that the triangulation of the cube has $(2n+1)^3$ vertices and $5\times (2n)^3$ tetrahedra. This implies that the triangulation of the torus has $(2n)^3$ vertices and $5\times (2n)^3$ \, 3-simplices.

The diameter of each tetrahedron is less than ${1}/({\sqrt{2}n})$. Thus, no tetrahedron contains an overtwisted disk as ${1}/({\sqrt{2}n}) < {2r_0}/{n}$. Now, we perturb all the triangles slightly to make the edges Legendrian.
\end{proof}

{\small

}

\end{document}